\newcommandx{\commentmark}[2][1=]{\todo[linecolor=Blue,backgroundcolor=Blue!25,bordercolor=Blue,#1]{M: #2}}
\newcommandx{\commentrob}[2][1=]{\todo[linecolor=Orange,backgroundcolor=Orange!25,bordercolor=Orange,#1]{R: #2}}
\newcommandx{\commentvalentina}[2][1=]{\todo[linecolor=Green,backgroundcolor=Green!25,bordercolor=Green,#1]{V: #2}}
\newcommand{\one}{\mathbbm{1}}
\theoremstyle{plain}
\numberwithin{equation}{section}
\newtheorem{theorem}[equation]{Theorem}
\newtheorem*{theorem*}{Theorem}
\newtheorem{corollary}[equation]{Corollary}
\newtheorem*{corollary*}{Corollary}
\newtheorem{lemma}[equation]{Lemma}
\newtheorem*{lemma*}{Lemma}
\newtheorem{proposition}[equation]{Proposition}
\newtheorem*{proposition*}{Proposition}
\theoremstyle{definition}
\newtheorem{definition}[equation]{Definition}
\newtheorem{remark}[equation]{Remark}
\newtheorem{notation}[equation]{Notation}
\newtheorem*{acknowledgements}{Acknowledgements}
\newcommand{\fakeenv}{} 
\newenvironment{restate}[2]  
{
	\renewcommand{\fakeenv}{#2} 
	\theoremstyle{plain}
	\newtheorem*{\fakeenv}{#1~\ref{#2}} 
	\begin{\fakeenv}
}
{
	\end{\fakeenv}
}
\newcommand{\DeclareMathBinary}[2]{\newcommand{#1}{\mathbin{#2}}}
\newcommand{\DeclareMathRelation}[2]{\newcommand{#1}{\mathrel{#2}}}
\newcommand{\ZZ}{\mathbb{Z}}
\newcommand{\calA}{\mathcal{A}}
\newcommand{\calR}{\mathcal{R}}
\newcommand{\calT}{\mathcal{T}}
\DeclareMathOperator{\Sym}{Sym}
\DeclareMathOperator{\Cr}{Cr}
\DeclareMathOperator{\lnk}{link}
\DeclareMathBinary{\crosses}{\pitchfork}
\DeclareMathOperator{\arc}{arc}
\DeclareMathOperator{\pol}{\mathcal{P}}
\DeclareMathOperator{\Hyp}{\mathcal{H}}
\DeclareMathOperator{\flip}{\mathcal{F}}
\DeclareMathOperator{\EMod}{Mod^{\pm}} 
\DeclareMathOperator{\Aut}{Aut} 
\DeclareMathOperator{\intersection}{\iota}
\DeclareMathRelation{\isom}{\cong} 
\DeclareMathRelation{\homeo}{\cong} 
\newcommand{\from}{\colon} 
\newcommand{\defeq}{\colonequals}
\tikzset{dot/.style={draw,shape=circle,fill=black,scale=0.4}}
\tikzset{tri/.style={draw,scale=0.2,fill=white,regular polygon,regular polygon sides=3}}
\DeclareRobustCommand{\rvdots}{%
  \vbox{
    \baselineskip4\p@\lineskiplimit\z@
    \kern-\p@
    \hbox{.}\hbox{.}\hbox{.}
  }}
\renewcommand*{\backref}[1]{}
\renewcommand*{\backrefalt}[4]{
	\ifcase #1 %
		[No citations.]%
	\else
		[#2]%
	\fi
}
\title{Cubical geometry in the polygonalisation complex}
\author{Mark C. Bell}
\address{Department of Mathematics, University of Illinois, Urbana, IL}
\email{\url{mcbell@illinois.edu}}
\author{Valentina Disarlo}
\address{Mathematical Sciences Research Institute, Berkeley, CA}
\email{\url{v.disarlo@gmail.com}}
\author{Robert Tang}
\address{Department of Mathematics, University of Oklahoma, Norman, OK}
\email{\url{rtang@math.ou.edu}}
\begin{document}

\begin{abstract}
We introduce the polygonalisation complex of a surface, a cube complex whose vertices correspond to polygonalisations.
This is a geometric model for the mapping class group and it is motivated by works of Harer, Mosher and Penner.
Using properties of the flip graph, we show that the midcubes in the polygonalisation complex can be extended to a family of embedded and separating hyperplanes, parametrised by the arcs in the surface.

We study the crossing graph of these hyperplanes and prove that it is quasi-isometric to the arc complex.
We use the crossing graph to prove that, generically, different surfaces have different polygonalisation complexes. 
The polygonalisation complex is not CAT(0), but we can characterise the vertices where Gromov's link condition fails.
This gives a tool for proving that, generically, the automorphism group of the polygonalisation complex is the (extended) mapping class group of the surface.
\end{abstract}

\maketitle

\begin{figure}[ht]
\centering
\begin{tikzpicture}[scale=4.5, thick]  

\pgfmathsetmacro{\inner}{0.5}
\coordinate (N) at (0,1);
\coordinate (S) at (0,-1);
\coordinate (A) at (-0.8,0.2);
\coordinate (B) at (0.2,0.0);
\coordinate (C) at (1,0.2);
\coordinate (X) at (0.1,0.1);

\coordinate (d1A) at (-0.2,-0.2);
\coordinate (d2A) at (0.2,-0.2);
\coordinate (d1B) at (-0.2,-0.2);
\coordinate (d2B) at (0.2,-0.2);
\coordinate (d1C) at (-0.2,-0.2);
\coordinate (d2C) at (0.2,-0.2);

\coordinate (Ac) at ($(A)+0.5*(d1A)+0.5*(d2A)$);
\coordinate (Bc) at ($(B)+0.5*(d1B)+0.5*(d2B)$);
\coordinate (Cc) at ($(C)+0.5*(d1C)+0.5*(d2C)$);

\coordinate (ABt) at ($0.33*(A)+0.33*(B)+0.33*(N)$);
\coordinate (ACt) at ($0.33*(A)+0.33*(C)+0.23*(N)$);
\coordinate (CBt) at ($0.33*(C)+0.33*(B)+0.33*(N)$);

\coordinate (A2) at ($(A)+(d1A)+(d2A)$);
\coordinate (B2) at ($(B)+(d1B)+(d2B)$);
\coordinate (C2) at ($(C)+(d1C)+(d2C)$);

\coordinate (A2B2b) at ($0.33*(A2)+0.33*(B2)+0.33*(S)$);
\coordinate (A2C2b) at ($0.33*(A2)+0.33*(C2)+0.23*(S)$);
\coordinate (C2B2b) at ($0.33*(C2)+0.33*(B2)+0.33*(S)$);

\fill [blue, opacity=0.2] (N) -- ($(N)!0.5!(C)$) -- (CBt) -- (X) -- (ABt) -- ($(N)!0.5!(A)$) -- cycle;
\fill [gray, opacity=0.2] ($(N)!0.5!(B)$) -- (CBt) -- ($(B)+0.5*(d2B)$) -- ($(B)+0.5*(d1B)+0.5*(d2B)$) -- ($(B)+0.5*(d1B)$) -- (ABt) -- cycle;

\draw [very thick] (N) -- (A);
\draw [very thick] (A) -- ($(A)+(d1A)$);
\draw [dotted] (A) -- ($(A)+(d2A)$);
\draw [very thick] ($(A)+(d1A)$) -- ($(A)+(d1A)+(d2A)$);
\draw [dotted] ($(A)+(d2A)$) -- ($(A)+(d1A)+(d2A)$);
\draw [very thick] ($(A)+(d1A)+(d2A)$) -- (S);

\draw [very thick] (N) -- (B);
\draw [very thick] (B) -- ($(B)+(d1B)$);
\draw [very thick] (B) -- ($(B)+(d2B)$);
\draw [very thick] ($(B)+(d1B)$) -- ($(B)+(d1B)+(d2B)$);
\draw [very thick] ($(B)+(d2B)$) -- ($(B)+(d1B)+(d2B)$);
\draw [very thick] ($(B)+(d1B)+(d2B)$) -- (S);

\draw [very thick] (N) -- (C);
\draw [dotted] (C) -- ($(C)+(d1C)$);
\draw [very thick] (C) -- ($(C)+(d2C)$);
\draw [dotted] ($(C)+(d1C)$) -- ($(C)+(d1C)+(d2C)$);
\draw [very thick] ($(C)+(d2C)$) -- ($(C)+(d1C)+(d2C)$);
\draw [very thick] ($(C)+(d1C)+(d2C)$) -- (S);

\draw [thin, dotted, gray] ($(A)+(d2A)$) -- ($(C)+(d1C)$);
\draw [very thick] ($(A)+(d1A)$) -- ($(B)+(d1B)$);
\draw [very thick] ($(B)+(d2B)$) -- ($(C)+(d2C)$);

\draw [dotted] ($(A)+0.5*(d1A)$) -- (Ac);
\draw [dotted] ($(A)+0.5*(d2A)$) -- (Ac);
\draw [dotted] ($(A)+(d1A)+0.5*(d2A)$) -- (Ac);
\draw [dotted] ($(A)+0.5*(d1A)+(d2A)$) -- (Ac);

\draw ($(B)+0.5*(d1B)$) -- (Bc);
\draw ($(B)+0.5*(d2B)$) -- (Bc);
\draw ($(B)+(d1B)+0.5*(d2B)$) -- (Bc);
\draw ($(B)+0.5*(d1B)+(d2B)$) -- (Bc);

\draw [dotted] ($(C)+0.5*(d1C)$) -- (Cc);
\draw [dotted] ($(C)+0.5*(d2C)$) -- (Cc);
\draw [dotted] ($(C)+(d1C)+0.5*(d2C)$) -- (Cc);
\draw [dotted] ($(C)+0.5*(d1C)+(d2C)$) -- (Cc);

\draw ($(N)!0.5!(A)$) -- (ABt);
\draw ($(N)!0.5!(B)$) -- (ABt);
\draw ($(A)+0.5*(d1A)$) -- (ABt);
\draw ($(B)+0.5*(d1B)$) -- (ABt);
\draw ($0.5*(A)+0.5*(d1A)+0.5*(B)+0.5*(d1B)$) -- (ABt);

\draw [thin, dotted, gray] ($(N)!0.5!(A)$) -- (ACt);
\draw [thin, dotted, gray] ($(N)!0.5!(C)$) -- (ACt);
\draw [thin, dotted, gray] ($(A)+0.5*(d2A)$) -- (ACt);
\draw [thin, dotted, gray] ($(C)+0.5*(d1C)$) -- (ACt);
\draw [thin, dotted, gray] ($0.5*(A)+0.5*(d1A)+0.5*(C)+0.5*(d1C)$) -- (ACt);

\draw ($(N)!0.5!(C)$) -- (CBt);
\draw ($(N)!0.5!(B)$) -- (CBt);
\draw ($(C)+0.5*(d2C)$) -- (CBt);
\draw ($(B)+0.5*(d2B)$) -- (CBt);
\draw ($0.5*(C)+0.5*(d2C)+0.5*(B)+0.5*(d2B)$) -- (CBt);

\draw ($(S)!0.5!(A2)$) -- (A2B2b);
\draw ($(S)!0.5!(B2)$) -- (A2B2b);
\draw ($(A2)-0.5*(d2A)$) -- (A2B2b);
\draw ($(B2)-0.5*(d2B)$) -- (A2B2b);
\draw ($0.5*(A2)-0.5*(d2A)+0.5*(B2)-0.5*(d2B)$) -- (A2B2b);

\draw [thin, dotted, gray] ($(S)!0.5!(A2)$) -- (A2C2b);
\draw [thin, dotted, gray] ($(S)!0.5!(C2)$) -- (A2C2b);
\draw [thin, dotted, gray] ($(A2)-0.5*(d1A)$) -- (A2C2b);
\draw [thin, dotted, gray] ($(C2)-0.5*(d2C)$) -- (A2C2b);
\draw [thin, dotted, gray] ($0.5*(A2)-0.5*(d2A)+0.5*(C2)-0.5*(d2C)$) -- (A2C2b);

\draw ($(S)!0.5!(C2)$) -- (C2B2b);
\draw ($(S)!0.5!(B2)$) -- (C2B2b);
\draw ($(C2)-0.5*(d1C)$) -- (C2B2b);
\draw ($(B2)-0.5*(d1B)$) -- (C2B2b);
\draw ($0.5*(C2)-0.5*(d1C)+0.5*(B2)-0.5*(d1B)$) -- (C2B2b);

\draw [dotted] (ABt) -- (X);
\draw [dotted] (CBt) -- (X);
\draw [thin, dotted, gray] (ACt) -- (X);
\draw [dotted] (X) -- ($(A)+0.5*(d1A)+0.5*(d2A)$);
\draw [dotted] (X) -- ($(B)+0.5*(d1B)+0.5*(d2B)$);
\draw [dotted] (X) -- ($(C)+0.5*(d1C)+0.5*(d2C)$);
\draw [dotted] (A2B2b) -- (X);
\draw [dotted] (C2B2b) -- (X);
\draw [thin, dotted, gray] (A2C2b) -- (X);

\foreach \p in {N,S,A,B,C,A2,B2,C2} {\node [dot, red] at (\p) {};}
\foreach \p in {($(A)+(d1A)$),($(B)+(d1B)$),($(B)+(d2B)$),($(C)+(d2C)$)} {\node [dot, red] at \p {};}
\foreach \p in {($(A)+(d2A)$),($(C)+(d1C)$)} {\node [dot, red!50] at \p {};}  

\foreach \p in {A,B,C} {\node [dot,blue] at ($(N)!0.5!(\p)$) {};}
\foreach \p in {A2,B2,C2} {\node [dot,blue] at ($(S)!0.5!(\p)$) {};}
\foreach \p in {($(A)+0.5*(d1A)$),($(A2)-0.5*(d2A)$),($(B)+0.5*(d1B)$),($(B2)-0.5*(d2B)$),($(B)+0.5*(d2B)$),($(B2)-0.5*(d1B)$),($(C)+0.5*(d2C)$),($(C2)-0.5*(d1C)$),
	($0.5*(A)+0.5*(d1A)+0.5*(B)+0.5*(d1B)$), ($0.5*(B)+0.5*(d2B)+0.5*(C)+0.5*(d2C)$)} {\node [dot, blue] at \p {};}
\foreach \p in {($(A)+0.5*(d2A)$),($(A2)-0.5*(d1A)$),($(C)+0.5*(d1C)$),($(C2)-0.5*(d2C)$),($0.5*(A)+0.5*(d2A)+0.5*(C)+0.5*(d1C)$)} {\node [dot, blue!50] at \p {};}  

\foreach \p in {(ABt),(CBt),(A2B2b),(C2B2b),($(B)+0.5*(d1B)+0.5*(d2B)$)} {\node [dot, green] at \p {};}
\foreach \p in {(ACt),(A2C2b),($(A)+0.5*(d1A)+0.5*(d2A)$),($(C)+0.5*(d1C)+0.5*(d2C)$)} {\node [dot, green!50] at \p {};}  

\node [dot, orange] at (X) {};

\end{tikzpicture}
\caption
[The polygonalisation complex of a hexagon.]
{The polygonalisation complex of a hexagon $S_{0,0}^6 =$
\tikz[scale=0.3,line cap=round,baseline=-3]{
	\foreach \i in {0,60,...,360} {\draw (\i:1) -- (\i+60:1);}
	}.}
\label{fig:P_S_0_0^6}
\end{figure}

\clearpage 

\section{Introduction}

In this paper we study the polygonalisation complex $\pol(S)$ of a surface $S$ with marked points.
This is a cube complex encoding the combinatorics of the polygonalisations of $S$, that is, the multiarcs that decompose $S$ into polygons.
The polygonalisation complex contains (the barycentric subdivision of) the flip graph $\flip(S)$ as a subcomplex.
It can also be regarded as the barycentric subdivision of the contractible CW-complex naturally associated to the Ptolemy groupoid of $S$ which appears in quantum Teichm\"{u}ller theory \cite{Funar} \cite{Roger}.

The mapping class group $\EMod(S)$ acts geometrically on $\pol(S)$.
However, in general, $\EMod(S)$ is not a CAT(0) group \cite[Theorem~4.2]{KapovichLeeb} and so $\pol(S)$ is not a CAT(0) cube complex.
We show that, in spite of this, $\pol(S)$ has many of the properties of CAT(0) cube complexes.
In particular, it has a rich hyperplane structure that is closely related to the arc complex.

\begin{restate}{Theorem}{thrm:pol_sageev}
There is a natural one-to-one correspondence between the hyperplanes of $\pol(S)$ and the arcs on $S$.
Moreover, for each arc $\alpha \in \calA(S)$, the corresponding hyperplane $H_\alpha$ is embedded, two-sided and separates $\pol(S)$ into two connected components: $\pol_\alpha(S)$ and $\overline{\pol_\alpha}(S)$.
\end{restate}

One can encode the combinatorics of the hyperplanes in $\pol(S)$ via its \emph{crossing graph} $\Cr(\pol(S))$: this has a vertex for every hyperplane in $\pol(S)$, and two hyperplanes are connected via an edge if and only if they cross.

\begin{restate}{Proposition}{prop:equivalent_crossing}
Let $H_\alpha$ and $H_\beta$ be the hyperplanes of $\pol(S)$ corresponding to arcs $\alpha, \beta \in \calA(S)$.
Then $H_\alpha$ and $H_\beta$ cross if and only if $\alpha$ and $\beta$ are disjoint and do not bound a (folded) triangle.
\end{restate}

Together, Theorem~\ref{thrm:pol_sageev} and Proposition~\ref{prop:equivalent_crossing} show that there is a natural embedding of $\Cr(\pol(S))$ into the arc graph $\calA(S)$.
This embedding is in fact a quasi-isometry (Corollary~\ref{cor:qi}). 
In particular, when $\partial S = \emptyset$ the arc graph is Gromov hyperbolic \cite[Theorem~20.2]{MasurSchleimer}, hence the crossing graph is also.
Proposition~\ref{prop:equivalent_crossing} also enables us to characterise the edges of the arc graph that do not appear in the crossing graph (Lemma~\ref{lem:characterise_folded}).
We show that $\calA(S)$ can be recovered from the combinatorics of $\Cr(\pol(S))$.
Applying rigidity results for arc graphs \cite{IrmakMcCarthy} \cite{Disarlo} \cite{KorkmazPapadopoulos} we then obtain:

\begin{restate}{Theorem}{thrm:pol_isom}
For all but finitely many pairs of surfaces, the complexes $\pol(S)$ and $\pol(S')$ are isomorphic if and only if $S$ and $S'$ are homeomorphic.
\end{restate}

We rephrase Gromov's link condition in terms of curves on $S$ (Definition~\ref{def:pos_curvature}).
We use this to extract the number of components of a given polygonalisation from the local combinatorics of $\pol(S)$.

\begin{restate}{Corollary}{cor:deficiency_combinatorics}
For each $k$, there is a combinatorial criterion that characterises the vertices of $\pol(S)$ corresponding to polygonalisations with exactly $k$ arcs.
\end{restate}
This gives a method for proving the rigidity of the polygonalisation complex via flip graph rigidity.
\begin{restate}{Theorem}{thrm:pol_aut}
For all but finitely many surfaces, the natural homomorphism
\[ \EMod(S) \to \Aut(\pol(S)) \]
is an isomorphism.
\end{restate}

We list the exceptions to these theorems in Appendix~\ref{sec:exceptions}.
Additionally, we highlight these results do not follow from \cite[Theorem~1.1]{Aramayona} since $\pol(S)$ does not satisfy the required rigidity axioms.


\section{Preliminaries}

Let $S$ be a connected, compact, orientable surface with a finite non-empty set of marked points.
We assume that each boundary component (if any) contains at least one marked point.
When $\partial S \neq \emptyset$, such surfaces are also known in the literature as ciliated \cite[Section~2]{FockGoncharov}.
Let
\[ E(S) \defeq 6g + 3b + 3s + p - 6 \; \textrm{and} \; F(S) \defeq 4g + 2b + 2s + p - 4 \]
where $g$ is the genus of $S$, $s$ is the number of marked points in its interior, $b$ is the number of boundary components and $p$ is the number of marked points on $\partial S$.

To avoid pathologies, from now on we will require that $F(S) \geq 3$.
The \emph{exceptional surfaces}, for which $F(S) < 3$, are listed and discussed in Appendix~\ref{sec:exceptions}.

\subsection{Objects}

We recall some standard objects that will appear throughout:

\subsubsection{Mapping class group}
The \emph{(extended) mapping class group} $\EMod(S)$ is the group of homeomorphisms of $S$ relative to the marked points up to isotopy.
We allow mapping classes to reverse orientation, permute the marked points and permute the boundary components of $S$.

\subsubsection{Triangulations}
An \emph{(essential) arc} $\alpha$ on $S$ is an embedded arc connecting marked points up to isotopy (relative to the set of marked points).
Such arcs are not \emph{null-homotopic} and are not \emph{boundary-parallel}, that is, they do not cut off a monogon (Figure~\ref{fig:inessential_monogon}) or cut off a bigon together with part of $\partial S$ (Figure~\ref{fig:inessential_bigon}) respectively.
We write $\intersection(\alpha, \beta)$ for the \emph{(geometric) intersection number} of $\alpha$ and $\beta$.
Arcs $\alpha$ and $\beta$ have disjoint interiors, which we refer to simply as being \emph{disjoint}, if and only $\intersection(\alpha, \beta) = 0$.
A \emph{multiarc} is a set of distinct and pairwise disjoint arcs.
For example, see Figure~\ref{fig:multiarc}.

\begin{figure}[ht]
	\centering
	\begin{subfigure}[b]{0.5\textwidth}
		\centering
		\begin{tikzpicture}[scale=0.8, thick]

\fill [pattern=north east lines,opacity=0.6] (-1,0) to [out=60,in=90] (1,0) to [out=270,in=-60] (-1,0);

\draw [red] (-1,0) to [out=60,in=90] (1,0) to [out=270,in=-60] (-1,0);
\node [dot] at (-1,0) {};

\path [use as bounding box] (-1.25,0.75) rectangle (1.25,-0.75);

\end{tikzpicture}
		\caption{A monogon arc.}
		\label{fig:inessential_monogon}
	\end{subfigure}%
	~
	\begin{subfigure}[b]{0.5\textwidth}
		\centering
		\begin{tikzpicture}[scale=1, thick]

\fill [pattern=north east lines,opacity=0.6] (-1,0) to [out=60,in=120] (1,0) to (-1,0);

\draw [very thick] (-1.25,0) -- (1.25,0) node [right] {$\partial S$};
\draw [red] (-1,0) to [out=60,in=120] (1,0);
\node [dot] at (-1,0) {};
\node [dot] at (1,0) {};

\path [use as bounding box] (-1.25,0.75) rectangle (1.25,-0.75);

\end{tikzpicture}
		\caption{A bigon arc.}
		\label{fig:inessential_bigon}
	\end{subfigure}
	\caption{Inessential arcs in $S$.}
\end{figure}
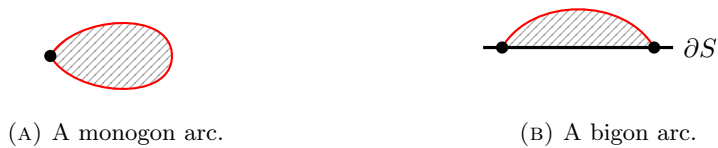

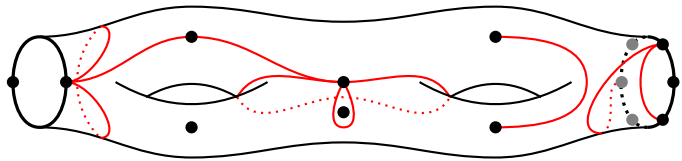
\begin{figure}[ht]
\centering
\begin{tikzpicture}[scale=2, thick]

\draw [red, dotted] (0.4,0.37) to [out=180+45,in=180-45] (0.4,-0.37);
\draw [red, dotted] (1.3,-0.1) to [out=-60,in=180] (2,-0.1) to [out=0,in=240] (2.7,-0.1);
\draw [red, dotted] (3.7,-0.34) to [out=30,in=180] (3.83,0);

\draw [dotted, very thick] (4,-0.3) to [out=180,in=180]
	(4,0.3);

\draw
	(0,0.3) to [out=0,in=180]
	(1,0.5) to [out=0,in=180]
	(2,0.5-0.1) to [out=0,in=180]
	(2,0.5-0.1) to [out=0,in=180]
	(3,0.5) to [out=0,in=180]
	(4,0.3);
\draw (4,-0.3) to [out=180,in=0]
	(3,-0.5) to [out=180,in=0]
	(2,-0.5+0.1) to [out=180,in=0]
	(1,-0.5) to [out=180,in=0]
	(0,-0.3);

\draw [very thick] (0,0.3) to [out=0,in=0]
	(0,-0.3) to [out=180,in=180]
	(0,0.3);

\draw [very thick] (4,0.3) to [out=0,in=0]
	(4,-0.3) ;

\draw [red] (0.175,0) to [out=0,in=0] (0.4,0.37);
\draw [red] (0.175,0) to [out=0,in=0] (0.4,-0.37);

\draw [red] (0.175,0) to [out=0,in=180] (1,0.3);

\draw [red] (1,0.3) to [out=0,in=180] (2,0);
\draw [red] (2,0) to [out=180,in=60] (1.3,-0.1);
\draw [red] (2,0) to [out=0,in=120] (2.7,-0.1);

\draw [red] (2,0) to [out=240,in=180] (2,-0.3) to [out=0,in=300] (2,0);

\draw [red] (3,0.3) to [out=0,in=90] (3.6,0) to [out=270,in=0] (3,-0.3);

\draw [red] (4.1,0.25) to [out=180,in=180] (4.1,-0.25);
\draw [red] (4.1,0.25) to [out=180,in=180] (3.7,-0.34);

\draw (0.5,0) to [out=-30,in=180+30] (1.5,0);
\draw (0.7,-0.1) to [out=30,in=180-30] (1.3,-0.1);
\draw (2.5,0) to [out=-30,in=180+30] (3.5,0);
\draw (2.7,-0.1) to [out=30,in=180-30] (3.3,-0.1);

\node [dot] at (0.175,0) {};
\node [dot] at (-0.175,0) {};
\node [dot] at (1,0.3) {};
\node [dot] at (1,-0.3) {};
\node [dot] at (2,0) {};
\node [dot] at (2,-0.2) {};
\node [dot] at (3,0.3) {};
\node [dot] at (3,-0.3) {};

\node [dot] at (4.1,0.25) {};
\node [dot] at (4.17,0) {};
\node [dot] at (4.1,-0.25) {};

\node [dot, gray] at (3.9,0.25) {};
\node [dot, gray] at (3.83,0) {};
\node [dot, gray] at (3.9,-0.25) {};

\end{tikzpicture}
\caption{A multiarc on $S$.}
\label{fig:multiarc}
\end{figure}

The set of multiarcs is a poset with respect to inclusion.
An \emph{(ideal) triangulation} of $S$ is a maximal multiarc.
An Euler characteristic argument shows that every triangulation has $E(S)$ arcs and $F(S)$ faces.
Each complementary region of a triangulation is a triangle with vertices on the marked points of $S$.
Triangles have embedded interior, but their boundary may be non-embedded.
In particular, triangles can be \emph{folded} as shown in Figure~\ref{fig:folded}.

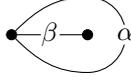
\begin{figure}[ht]
\centering
\begin{tikzpicture}[scale=0.5, rotate=0]

\node [dot] (a) at (0,0) {};
\node [dot] (b) at (2,0) {};
\coordinate (l) at (2,1);
\coordinate (r) at (2,-1);
\coordinate (t) at (3,0);

\draw (a) to [out=45,in=180] (l) to [out=0,in=90] (t) node {\contour*{white}{$\alpha$}} to [out=270,in=0] (r) to [out=180,in=-45] (a);
\draw (a) -- node {\contour*{white}{$\beta$}} (b);

\end{tikzpicture}
\caption{A folded triangle.}
\label{fig:folded}
\end{figure}





\subsubsection{The flip graph}
The \emph{flip graph} $\flip(S)$ has a vertex for each triangulation.
Triangulations $\calT, \calT' \in \flip(S)$ are connected via an edge (of length one) if and only if they differ by a \emph{flip}.
This move consists of replacing the diagonal of a quadrilateral inside the triangulation with the other diagonal, as shown in Figure~\ref{fig:flip}.

Any arc of a triangulation is either flippable or appears as the arc $\beta$ in Figure~\ref{fig:folded}.
In the latter case, the arc $\beta$ is flippable after first flipping the arc $\alpha$.
Thus for any arc $\alpha$ there is a triangulation $\calT \ni \alpha$ in which $\alpha$ is flippable.

\begin{figure}[htb]
\centering
\begin{tikzpicture}[scale=1.25,thick]

\begin{scope}[shift={(-1.5,0)}]
	\foreach \i in {45,135,...,315} {\draw (\i:1) -- (\i+90:1);}
	\foreach \i in {45,135,...,315} {\node [dot] at (\i:1) {};}
	\draw (45:1) -- node {\contour*{white}{$\alpha$}} (225:1);
	\node at (0,-1) {$\calT$};
\end{scope}

\begin{scope}[shift={(1.5,0)}]
	\foreach \i in {45,135,...,315} {\draw (\i:1) -- (\i+90:1);}
	\foreach \i in {45,135,...,315} {\node [dot] at (\i:1) {};}
	\draw (135:1) -- (315:1);
	\node at (0,-1) {$\calT'$};
\end{scope}

\draw [->] (-0.5,0) -- node[above] {Flip} (0.5,0);

\end{tikzpicture}
\caption{Flipping the arc $\alpha$ of a triangulation.}
\label{fig:flip}
\end{figure}
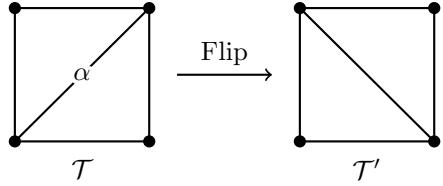
The flip graph appears implicitly in works of Harer \cite{Harer}, Mosher \cite{Mosher} and Penner \cite{Penner}.
It is connected \cite{Harer} and the mapping class group acts on it \emph{geometrically}, that is, properly, cocompactly and by isometries.
Thus by the \v{S}varc--Milnor Lemma \cite[Proposition~I.8.19]{BridsonHaefliger} this graph is quasi-isometric to $\EMod(S)$.
The geometry of the flip graph was recently studied by the second author and Parlier in \cite{DisarloParlier}. 
By \cite[Theorem 1.2]{KorkmazPapadopoulos} and \cite[Theorem 1.1]{AramayonaKoberdaParlier}, we have: 

\begin{theorem}
\label{thrm:flip_rig}
The natural homomorphism
\[ \EMod(S) \to \Aut(\flip(S)) \]
is an isomorphism. \qed
\end{theorem}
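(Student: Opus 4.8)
This statement is quoted above from \cite{KorkmazPapadopoulos} and \cite{AramayonaKoberdaParlier}; here is the route I would take. First I would check that the homomorphism is injective. Suppose $f \in \EMod(S)$ fixes every vertex of $\flip(S)$. Given an arc $\alpha$ of $S$, pick a triangulation $\calT \ni \alpha$ in which $\alpha$ is flippable and let $\calT'$ be the result of the flip; since $f$ fixes both $\calT$ and $\calT'$ as isotopy classes of triangulations, it fixes their common multiarc $\calT \cap \calT' = \calT \setminus \{\alpha\}$, and then $f(\calT) = \calT$ forces $f(\alpha) = \alpha$. Hence $f$ fixes every arc of $S$, so $f$ is isotopic to the identity by the Alexander method (which applies under the standing hypothesis $F(S) \geq 3$).

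The real content is surjectivity, and the plan is to promote an automorphism $\phi \in \Aut(\flip(S))$ to a simplicial automorphism of the arc graph $\calA(S)$ and then apply arc-graph rigidity \cite{IrmakMcCarthy}, \cite{Disarlo}, \cite{KorkmazPapadopoulos}. Since every flip occurs inside an embedded quadrilateral, each edge of $\flip(S)$ carries a canonical label: the size-$(E(S)-1)$ multiarc shared by its endpoints, together with the pair of diagonals of the completing quadrilateral. I would first prove the combinatorial criterion that two edges at a common triangulation $\calT$ span a square in $\flip(S)$ exactly when the two quadrilaterals are disjoint (when they overlap one gets the pentagon relation instead). Using this square structure I would then characterise, purely in terms of $\flip(S)$, the subgraph $\flip_\alpha(S)$ spanned by the triangulations containing a fixed arc $\alpha$, and identify it with the flip graph of the surface obtained by cutting $S$ along $\alpha$. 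This shows $\phi$ permutes the family $\{\flip_\alpha(S) : \alpha \in \calA(S)\}$; since $\flip_\alpha(S) \cap \flip_\beta(S) \neq \emptyset$ precisely when $\intersection(\alpha,\beta) = 0$, the induced permutation of $\calA(S)$ preserves adjacency and hence is an automorphism of the arc graph. Arc-graph rigidity now yields $g \in \EMod(S)$ inducing the same permutation of arcs, hence of triangulations; then $g^{-1}\phi$ fixes every vertex of $\flip(S)$ and, by the local analysis of quadrilaterals, every edge label too, so $g^{-1}\phi$ is the identity and $\phi = g$.

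The main obstacle will be recovering the subgraphs $\flip_\alpha(S)$ from the bare graph $\flip(S)$: the flip graph remembers only top-dimensional multiarcs and their flip adjacencies, so extracting which arc is flipped across a given edge requires a careful analysis of the complementary regions of size-$(E(S)-1)$ multiarcs, distinguishing honest quadrilaterals from configurations with boundary identifications, together with an induction on the complexity of $S$ whose base cases must be handled by hand. A secondary point is that the statement genuinely fails on small surfaces, where $\flip(S)$ may be a line or a tree carrying extra symmetry; this is exactly why the standing hypothesis $F(S) \geq 3$ is imposed, and the low-complexity exceptions are enumerated in Appendix~\ref{sec:exceptions}.
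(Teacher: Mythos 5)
The paper does not actually prove this theorem: it is imported directly from \cite{KorkmazPapadopoulos} and \cite{AramayonaKoberdaParlier}, with the closing square standing in for a citation rather than an argument, so there is no internal proof to compare yours against. Your outline matches the strategy of those cited works --- injectivity by showing an element fixing all triangulations fixes all arcs and then applying the Alexander method (this part of your argument is complete as written), surjectivity by reconstructing the strata $\flip_\alpha(S)$ from the bare graph structure of $\flip(S)$ and invoking arc-graph rigidity --- and the step you single out as the main obstacle, recognising $\flip_\alpha(S)$ combinatorially and identifying it with the flip graph of the cut surface, is precisely the technical content of those references, so your proposal is consistent with the paper's treatment rather than a gap in it.
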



\subsubsection{The arc graph}

The arc graph $\calA(S)$ has a vertex for each arc on $S$.
Arcs $\alpha, \beta \in \calA(S)$ are connected via an edge (of length one) if and only if they are disjoint.
The arc graph appeared first in work of Harer \cite{Harer}.
The arc graph of $S$ can be naturally extended to a flag simplicial complex called the \emph{arc complex} of $S$.
The one-skeleton of the dual of the arc complex is the flip graph.

\begin{theorem}
\label{thrm:arc_isom}
The graphs $\calA(S)$ and $\calA(S')$ are isomorphic if and only if $S$ and $S'$ are homeomorphic.
\end{theorem}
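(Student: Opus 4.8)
The forward implication is immediate: any homeomorphism $\phi \from S \to S'$, allowed to reverse orientation and to permute the marked points and boundary components, carries isotopy classes of essential arcs to isotopy classes of essential arcs and satisfies $\intersection(\phi(\alpha), \phi(\beta)) = \intersection(\alpha, \beta)$, hence induces a graph isomorphism $\calA(S) \to \calA(S')$. For the converse the plan is to prove the stronger assertion that every isomorphism $f \from \calA(S) \to \calA(S')$ is induced by such a homeomorphism; the existence of this homeomorphism is exactly what gives $S \homeo S'$. I would argue by induction on the complexity of $S$, say on $E(S)$, with the finitely many surfaces of small complexity serving as base cases and treated by hand.

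The first step is to read off combinatorial invariants. A clique in $\calA(S)$ is exactly a multiarc and a maximal clique is an ideal triangulation; since every triangulation has $E(S)$ arcs, $f$ forces $E(S) = E(S')$ and sends $k$-arc multiarcs to $k$-arc multiarcs. In particular $f$ carries pairs of triangulations sharing $E(S) - 1$ arcs to pairs sharing $E(S') - 1$ arcs, so it induces an isomorphism of the flip graphs $\flip(S) \to \flip(S')$. Using such configurations one can detect combinatorially when two arcs of a triangulation cut out a folded triangle as in Figure~\ref{fig:folded}; this is the one situation where the link description in the next step needs a correction.

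The inductive engine is the identification of links with arc graphs of cut surfaces. For an arc $\alpha \in \calA(S)$, let $S_\alpha$ denote the (possibly disconnected) surface obtained by cutting $S$ along $\alpha$. Then the full subgraph $\lnk(\alpha)$ of $\calA(S)$ spanned by the arcs disjoint from $\alpha$ is, after accounting for the folded-triangle corrections above and after discarding the arcs that become inessential in $S_\alpha$, isomorphic to the join of the arc graphs of the components of $S_\alpha$. Since $f$ restricts to an isomorphism $\lnk(\alpha) \to \lnk(f(\alpha))$ and each component of $S_\alpha$ has strictly smaller complexity than $S$, the inductive hypothesis produces a homeomorphism $S_\alpha \to S'_{f(\alpha)}$, matching up components, that induces $f$ on $\lnk(\alpha)$. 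One then picks a small family of arcs $\alpha_1, \dots, \alpha_n$ whose cut surfaces cover $S$ with controlled overlaps, checks that the resulting homeomorphisms agree there, and glues them to a homeomorphism $\Phi \from S \to S'$. Finally $\Phi$ induces $f$ on a system of arcs filling $S$, so by the Alexander method $\Phi$ induces $f$ on all of $\calA(S)$; in particular $\Phi$ realises the homeomorphism $S \homeo S'$.

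The main obstacle is the patching step: setting up the induction so that the link is genuinely an arc graph of the cut surface ---which forces one to track folded triangles and arcs that stop being essential after cutting--- and then verifying that the various cut-surface homeomorphisms supplied by the inductive hypothesis are mutually compatible and assemble into a single homeomorphism inducing $f$. A secondary but unavoidable task is to identify and handle directly the finitely many low-complexity surfaces that anchor the induction. These verifications, and the choice of complexity ordering that makes them go through, are exactly the content of \cite{IrmakMcCarthy}, \cite{Disarlo} and \cite{KorkmazPapadopoulos}.
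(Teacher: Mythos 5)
Your forward direction and your opening observations (maximal cliques are triangulations, hence $E(S)=E(S')$, and an isomorphism of arc graphs induces one of flip graphs) are fine, but the converse as you present it has a genuine gap: the inductive engine is never actually built, and the concluding appeal to \cite{IrmakMcCarthy}, \cite{Disarlo}, \cite{KorkmazPapadopoulos} does not close it, because those results do not, as quotable statements, cover all the comparisons the theorem requires. Irmak--McCarthy concerns injective simplicial self-maps of the arc complex of a \emph{fixed} surface, while Korkmaz--Papadopoulos and Disarlo are invoked in this paper only for pairs in which both surfaces are without, respectively with, boundary; neither they nor your sketch addresses the case in which exactly one of $S$, $S'$ has boundary, where one must \emph{rule out} an isomorphism rather than promote one to a homeomorphism. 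Moreover the step you rightly flag as the main obstacle --- identifying $\lnk(\alpha)$ with a join of arc graphs of the components of the cut surface $S_\alpha$ and then gluing the resulting cut-surface homeomorphisms --- is exactly the content that would have to be supplied: the identification fails to be an isomorphism without substantial corrections (arcs can become inessential after cutting, distinct arcs of $S_\alpha$ can become isotopic in $S$, and the low-complexity pieces produced by cutting are precisely the surfaces where such identifications and the rigidity statements themselves degenerate). As written, the proposal is an outline of the strategy of the cited papers rather than a proof.

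For comparison, the paper's argument is a short deduction from published statements rather than a re-derivation: it uses the induced isomorphism $\flip(S)\to\flip(S')$ to invoke the flip-graph rigidity theorem of Aramayona--Koberda--Parlier \cite{AramayonaKoberdaParlier}, a genuine two-surface statement which settles every case except when one of $S$, $S'$ is the complement of a (possibly empty) multiarc in the four-times marked sphere or the twice-marked torus; the leftover cases are handled by \cite{KorkmazPapadopoulos} (both without boundary), \cite{Disarlo} (both with boundary), and, for the mixed case, by the elementary count that maximal cliques of $\calA(S)$ have exactly $E(S)$ vertices while $E(S)=6$ and $E(S')\leq 5$ there. If you wish to keep a citation-based route, you must either quote isomorphism (not automorphism) versions of arc-complex rigidity whose hypotheses jointly cover all pairs under consideration, or add an argument --- such as the flip-graph reduction plus this clique count --- for the cross-type and exceptional cases.
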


\begin{proof}
Every isomorphism between $\calA(S)$ and $\calA(S')$ induces an isomorphism between $\flip(S)$ and $\flip(S')$.
If neither $S$ nor $S'$ is the complement of a (possibly empty) multiarc on the four-times marked sphere or the twice-marked torus then the result follows from \cite[Theorem~1.4]{AramayonaKoberdaParlier}.
Otherwise:
\begin{itemize}
\item If $\partial S = \emptyset = \partial S'$ then the result follows from \cite[Theorem~1.1]{KorkmazPapadopoulos}.
\item If $\partial S = \emptyset \neq \partial S'$ then $E(S) = 6$ and $E(S') \leq 5$.
Since maximal complete subgraphs of $\calA(S)$ have exactly $E(S)$ vertices, it follows that $\calA(S)$ and $\calA(S')$ cannot be isomorphic.
\item If $\partial S \neq \emptyset \neq \partial S'$ then the result follows from \cite[Theorem~1.1]{Disarlo}.
\end{itemize}
In any case, the result holds.
\end{proof}

The mapping class group acts on $\calA(S)$ cocompactly and by isometries but not properly. Generally, the automorphism group of $\calA(S)$ is isomorphic to $\EMod(S)$ by \cite[Theorem 1.2]{IrmakMcCarthy} and  \cite[Theorem~1.2]{Disarlo}.

\subsection{The polygonalisation complex}

A \emph{polygonalisation} $P$ (or \emph{ideal cell decomposition}) is a multiarc such that each complementary region of $P$ is a \emph{polygon}, a topological disk with at least three sides and no marked points in its interior.
These are dual to fat graphs (also known as ribbon graphs) \cite[Section~4.2]{ReshetikhinTuraev}.

\begin{remark}
\label{rem:polygon_containment}
If $P$ is a polygonalisation and $Q$ is a multiarc such that $P \subseteq Q$ then $Q$ is a polygonalisation.
Additionally, suppose that $P, Q$ and $R$ are polygonalisations:
\begin{itemize}
\item If $P, Q \subseteq R$ then $P \cup Q$ is a polygonalisation.
\item If $P, Q \supseteq R$ then $P \cap Q$ is a polygonalisation.
\end{itemize}
\end{remark}

\begin{definition}
The \emph{polygonalisation complex} $\pol(S)$ is a cube complex in which vertices correspond to polygonalisations.
Two polygonalisations $P, Q \in \pol(S)$ are connected by an edge (of length one) if and only if they differ by a single arc, that is, their symmetric difference $P \Delta Q$ is a single arc.
Inductively, a $k$--cube is added whenever its $(k-1)$--skeleton appears.
See Appendix~\ref{sec:examples} for examples.
\end{definition}

Since any polygonalisation can be extended to a triangulation and $\flip(S)$ is connected, $\pol(S)$ is also connected.
Again, $\EMod(S)$ acts geometrically on $\pol(S)$ and so they are quasi-isometric.

We now establish some definitions and notation that will be used throughout.

\begin{definition}
For an arc $\alpha \in \calA(S)$, its \emph{stratum} $\pol_\alpha(S)$ is the subcomplex of $\pol(S)$ induced by the subset $\{ P \in \pol(S) : \alpha \in P \}$.
Similarly, define $\overline{\pol_\alpha}(S)$ to be the subcomplex induced by $\{ P \in \pol(S) : \alpha \notin P \}$.
\end{definition}

\begin{definition}
An arc $\alpha$ is \emph{removable from $P$} if $\alpha \in P$ and $P - \{\alpha\}$ is also a polygonalisation.
An arc $\alpha$ \emph{addable to $P$} if $\alpha \notin P$ and $P \cup \{\alpha\}$ is a polygonalisation.
\end{definition}

Observe that $\alpha \in P$ is removable if and only if its interior meets two distinct polygons of $P$.
Let $\partial \pol_\alpha(S)$ (resp. $\partial \overline{\pol_\alpha}(S)$) be the subcomplex of $\pol_\alpha(S)$ (resp.\ $\overline{\pol_\alpha}(S)$) induced by the polygonalisations in which $\alpha$ is removable (resp.\ addable).

\begin{notation}
For $P, Q \in \pol(S)$, write $P \succ Q$ if $P \supseteq Q$ and $|P - Q| = 1$.
\end{notation}

For an edge $e = \{P, Q\}$ in $\pol(S)$ let $\arc(e) \defeq P \Delta Q \in \calA(S)$ denote the arc that appears in $P$ but not $Q$ (or vice versa).

\begin{remark}
\label{rem:arc}
For any edge $e = \{P, Q\}$ in $\pol(S)$, observe that $\arc(e) = \alpha$ if and only if $P \in \partial \pol_\alpha(S)$ and $Q \in \partial \overline{\pol_\alpha}(S)$ (or vice versa).
\end{remark}

\section{The cube complex structure of \texorpdfstring{$\pol(S)$}{P(S)}}

We describe some of the key properties of the cube complex structure of $\pol(S)$.
We begin by recalling some of the standard terms for cube complexes.
For a complete reference see \cite{Wise}.

An $n$--cube is the Euclidean cube $[-\frac{1}{2}, \frac{1}{2}]^n$.
For every $0 \leq k \leq n - 1$ a $k$--\emph{face} is a subspace obtained restricting $n - k$ coordinates to $\pm \frac{1}{2}$.
A $k$--face is also a $k$--cube.
A \emph{cube complex} $X$ is a cell complex obtained by gluing cubes along their faces by isometries.

The \emph{link} of a vertex $v$ of $X$ is the complex $\lnk(v)$ induced on a small sphere about $v$ by $X$.
A \emph{flag complex} is a simplicial complex where $n+1$ vertices span an $n$--simplex if and only if they are pairwise adjacent.
A cube complex $X$ is \emph{non-positively curved} if it satisfies \emph{Gromov's link condition}: The link of each vertex is a flag complex \cite[Section~4.2.C]{Gromov}.
If $X$ is non-positively curved and simply connected then $X$ is CAT(0).

A \emph{midcube} is a subspace of a cube obtained by restricting exactly one coordinate to $0$.
A \emph{hyperplane} is the union of midcubes that meet parallel edges.
The \emph{carrier} $N(H)$ of a hyperplane $H$ is the union of all the cubes such that intersect $H$ in a midcube.
Sageev showed that CAT(0) cube complexes come equipped with a family of ``nice'' hyperplanes.

\begin{theorem}[{\cite[Theorem~1.1]{Sageev}}]
If $X$ is a CAT(0) cube complex then:
\begin{enumerate}
\item each midcube lies in an embedded hyperplane;
\item every hyperplane $H$ is \emph{two-sided}, that is, $N(H) \isom [-\frac{1}{2}, \frac{1}{2}] \times H$;
\item every hyperplane $H$ separates $X$, that is, $X - H$ consists of two connected components $H^+$ and $H^-$ called \emph{halfspaces};
\item every hyperplane $H$ is a CAT(0) cube complex; and
\item every hyperplane $H$ and its carrier $N(H)$ are convex in $X$.
\end{enumerate}
\end{theorem}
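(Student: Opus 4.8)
The plan is to prove all five items simultaneously from the combinatorics of $X$, following Sageev's original disc-diagram argument. First I would recast the definition of hyperplane: call two edges of $X$ \emph{square-equivalent} if they are opposite sides of a common $2$--cube, and let $\sim$ be the transitive closure. The midcubes of the cubes dual to a $\sim$--class assemble into the hyperplane $H$, and its carrier $N(H)$ is the union of those cubes. Each of (1)--(5) then becomes a statement about how a single $\sim$--class, or a pair of them, sits inside $X$. The only hypotheses available are that $X$ is simply connected and that every vertex link is flag, and these enter through two mechanisms: by van Kampen's lemma every edge-loop in $X$ bounds a combinatorial (singular) \emph{disc diagram} $D \to X$ with $D$ a planar square complex; and the link condition allows one to pass to a \emph{reduced} diagram of minimal area, in which no two squares share more than an edge-pair and there are no ``bigon'' or ``nogon'' subdiagrams.

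The technical core is a lemma about a reduced disc diagram $D$: each $\sim$--class in $D$ is carried by a chain of squares whose midcubes form an arc properly embedded in $D$, with no self-crossings and no self-osculations, and two distinct such arcs cross at most once. One proves this by induction on $\mathrm{Area}(D)$: a self-crossing or a double-crossing produces an innermost ``bigon'' of squares, which the link condition lets one excise and re-glue, lowering the area and contradicting minimality. Transporting this downstairs gives item~(1): if the hyperplane $H$ through a midcube $m$ were non-embedded — two of its midcubes coinciding, or two dual edges osculating — one joins the relevant cube centres by a loop, fills it with a reduced disc diagram, and finds the forbidden self-crossing or self-osculation in $D$.

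For item~(2), two-sidedness asks for a coherent co-orientation of all edges dual to $H$; the obstruction is a ``self-osculation with a flip'' around a loop in $N(H)$, again excluded by filling that loop with a reduced disc diagram, whence $N(H) \cong [-\tfrac12,\tfrac12] \times H$. Item~(3) follows by $2$--colouring the vertices of $X - H$: put the two endpoints of each edge dual to $H$ on opposite sides and propagate along edges not dual to $H$. Consistency amounts to every edge-loop meeting the $\sim$--class an even number of times, which is exactly the disc-diagram statement that the corresponding hyperplane arc enters and leaves the disc in pairs; one then checks each colour class is nonempty and connected, yielding the halfspaces $H^+$ and $H^-$.

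Item~(4): $H$ is manifestly a cube complex, the link of a vertex of $H$ is the link in $X$ of the corresponding dual edge, and the link of an edge in a flag complex is again flag, so $H$ inherits the link condition; simple connectivity of $H$ is obtained by pushing a loop in $H$ slightly into a halfspace, filling it in $X$, and projecting the disc diagram back onto $H$. For item~(5) I would first record the consequence of the core lemma that a combinatorial geodesic of $X$ crosses each hyperplane at most once; hence every halfspace is convex, and so is any intersection of halfspaces. Then I would show $N(H)$ is the intersection of all halfspaces of hyperplanes $K$ disjoint from $H$ that contain it (hyperplanes crossing $H$ cut $N(H)$ in a single cube-product, not in a halfspace), giving convexity of $N(H)$; convexity of $H$ follows since the length-nonincreasing retraction $N(H) \cong [-\tfrac12,\tfrac12]\times H \to H$ carries geodesics to geodesics. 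I expect the main obstacle to be exactly the disc-diagram lemma: making the induction on area precise, in particular the innermost-bigon cancellation and the bookkeeping that two hyperplane arcs meet at most once; once that is in place, every other step is formal.
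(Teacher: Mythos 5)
The first thing to say is that the paper does not prove this statement at all: it is quoted verbatim, with a citation, as background from Sageev, and the body of the paper then establishes analogues of items (1)--(3) for the non-CAT(0) complex $\pol(S)$ by entirely different, surface-specific arguments (the square lemma, the pentagon detour, and connectivity of $\partial \pol_\alpha(S)$). So there is no ``paper proof'' to compare against; what you have written is an outline of a proof of the cited theorem itself.

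As such an outline, your route is the standard one (it is essentially Sageev's original argument as systematised by Wise): define hyperplanes via square-equivalence of edges, fill loops with van Kampen disc diagrams using simple connectivity, and use the flag link condition to reduce to minimal-area diagrams in which dual curves neither self-cross nor cross each other twice; items (1)--(3) then follow as you describe, and the parity/2-colouring argument for separation is exactly right. However, be aware that as written this is a plan rather than a proof: the entire technical weight sits in the deferred core lemma (the innermost-bigon cancellation and the area induction), and two later steps are asserted without the argument that makes them work --- in item (4), simple connectivity of $H$ by ``pushing a loop into a halfspace and projecting back'' needs the product structure $N(H) \isom [-\frac{1}{2},\frac{1}{2}] \times H$ together with a retraction argument spelled out, and in item (5) the identification of $N(H)$ with the intersection of halfspaces bounded by hyperplanes disjoint from $H$ is not a definition-chase: one must produce, for each vertex $v \notin N(H)$, a hyperplane disjoint from $H$ separating $v$ from $N(H)$ (typically via the first hyperplane crossed by a geodesic from $v$ to $N(H)$), and doing this without circularly assuming the convexity you are trying to prove requires care. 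None of these are wrong turns, but they are the places where the proof actually lives.
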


Despite the fact that generically $\pol(S)$ is not CAT(0), we show that Properties 1, 2 and 3 still hold for $\pol(S)$ (Theorem~\ref{thrm:pol_sageev}).
However, generically, Properties 4 and 5 do not hold for $\pol(S)$. 
For example, see Table~\ref{tab:example_hyperplanes} and $S_{0,0}^{2,1}$ in Table~\ref{tab:examples} respectively. 
\subsection{The cubes of \texorpdfstring{$\pol(S)$}{P(S)}}

The following lemma will also be useful in Section~\ref{sec:rigidity}.

\begin{lemma}[Square lemma]
\label{lem:square_lemma}
Suppose that $P_1, P_2, P_3, P_4 \subseteq \pol(S)$ is an embedded $4$--cycle.
If $P_1 \succ P_2$ then $P_4 \succ P_3$.
Moreover, $\arc(\{P_1, P_2\}) = \arc(\{P_3, P_4\})$.
\begin{center}
\begin{tikzpicture}[scale=0.8, thick]
\node (T) at (0,1) {$P_1$};
\node (Ta) at (-1,0) {$P_2$};
\node (Tb) at (1,0) {$P_4$};
\node (Tab) at (0,-1) {$P_3$};
\draw [->] (T) -- (Ta);
\draw (T) -- (Tb);
\draw (Ta) -- (Tab);
\draw [->] (Tb) -- (Tab);
\end{tikzpicture}
\end{center}
\end{lemma}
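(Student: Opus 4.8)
The plan is to analyse the four polygonalisations $P_1,P_2,P_3,P_4$ by tracking which arcs appear in which, using the hypothesis that the $4$--cycle is embedded (so all four are distinct) and that consecutive ones differ by exactly one arc. Write $\alpha \defeq \arc(\{P_1,P_2\})$, so that $P_2 = P_1 - \{\alpha\}$ since $P_1 \succ P_2$. Let $\beta \defeq \arc(\{P_2,P_3\})$ and $\gamma \defeq \arc(\{P_4,P_1\})$. The edges of the $4$--cycle force $\alpha \neq \gamma$ and $\alpha \neq \beta$ (otherwise two of the $P_i$ would coincide, contradicting embeddedness — here I would spell out the short case-check: e.g.\ if $\gamma = \alpha$ then $P_4 = P_1 \mp\{\alpha\}$ equals either $P_1$ or $P_2$). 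Since $P_3$ differs from $P_2$ by $\beta$ and from $P_4$ by $\arc(\{P_3,P_4\}) =: \delta$, and $P_4$ differs from $P_1$ by $\gamma$, going around the cycle the symmetric differences must cancel: $P_1 \Delta P_2 \Delta P_3 \Delta P_4 \Delta P_1 = \emptyset$ as a multiset of arcs taken mod $2$, i.e.\ $\{\alpha\} \Delta \{\beta\} \Delta \{\delta\} \Delta \{\gamma\} = \emptyset$. Combined with $\alpha \neq \beta$ and $\alpha \neq \gamma$, this forces $\alpha = \delta$, which is the ``moreover'' claim $\arc(\{P_1,P_2\}) = \arc(\{P_3,P_4\})$; it also forces $\beta = \gamma$.

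It then remains to show $P_4 \succ P_3$, i.e.\ $P_3 = P_4 - \{\alpha\}$ rather than $P_3 = P_4 \cup \{\alpha\}$. From the arc bookkeeping above, $P_2 = P_1 - \{\alpha\}$, and $\beta = \gamma \notin P_1$ is the arc flipped in or out along the two ``vertical'' edges. I would argue that $\beta \notin P_1$ and $\beta\notin P_2$ in the same way (if $\beta$ were removed from $P_1$ to get $P_4$, then since $\beta \neq \alpha$ we would have $\beta \in P_2$ too, and removing $\beta$ from $P_2$ gives $P_4 - \{\alpha\} = P_3$; conversely adding $\beta$). Concretely: $P_4 = P_1 \mathbin{\triangle} \{\beta\}$ and $P_3 = P_2 \mathbin{\triangle}\{\beta\} = (P_1 - \{\alpha\}) \mathbin{\triangle} \{\beta\}$. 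Whichever of the two cases holds for $\beta$ (addable to or removable from $P_1$), since $\alpha \neq \beta$ we get $P_4 - P_3 = \{\alpha\}$ and $P_3 \subseteq P_4$, hence $P_4 \succ P_3$. This is essentially pure set theory on the four vertices once the arc-cancellation identity is in hand, and it uses Remark~\ref{rem:polygon_containment} only implicitly (all four vertices are genuine polygonalisations by hypothesis).

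The main obstacle — really the only subtle point — is justifying the ``symmetric differences cancel around the cycle'' step rigorously and ruling out the degenerate possibility that the arc $\beta$ flipped along one vertical edge differs from the one flipped along the other; in other words, making sure that an embedded $4$--cycle in $\pol(S)$ cannot have, say, three distinct arcs labelling its edges. I expect this to follow cleanly from the parity/cancellation argument ($\{\alpha\}\,\triangle\,\{\beta\}\,\triangle\,\{\delta\}\,\triangle\,\{\gamma\} = \emptyset$ in $\mathbb{Z}/2$-multiset terms forces the four singletons to pair up, and the embeddedness rules out the pairing $\alpha=\beta$, $\gamma=\delta$ as well as $\alpha=\gamma$, $\beta=\delta$, leaving only $\alpha=\delta$, $\beta=\gamma$), but I would state it carefully since it is the crux. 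Everything after that is routine verification that ``$\succ$'' holds with the correct orientation.
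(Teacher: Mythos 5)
Your argument is correct, but it takes a different route from the paper. The paper proves the direction claim by contradiction: assuming $P_3 \succ P_4$, a cardinality count around the cycle forces $P_1 \succ P_2 \prec P_3 \succ P_4 \prec P_1$, and then $P_1 = P_2 \cup P_4 = P_3$ (invoking Remark~\ref{rem:polygon_containment} for the union), contradicting embeddedness; the arc equality is then obtained by a separate counting argument. You instead work directly with symmetric differences around the cycle: the cancellation $\{\alpha\} \Delta \{\beta\} \Delta \{\delta\} \Delta \{\gamma\} = \emptyset$, together with $\alpha \neq \beta$ and $\alpha \neq \gamma$ (forced by embeddedness), pins down the pairing $\alpha = \delta$, $\beta = \gamma$, giving the ``moreover'' statement first, and the direction $P_4 \succ P_3$ then falls out of pure set algebra ($P_3 = (P_1 - \{\alpha\}) \Delta \{\beta\} = P_4 - \{\alpha\}$ with $\alpha \in P_4$). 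This is arguably more elementary and more general: it never uses any closure property of polygonalisations, so it proves the lemma for any graph whose vertices are finite sets with edges given by single-element symmetric difference, whereas the paper's proof leans (if only lightly) on the structure of $\pol(S)$. One small slip: your interim assertion that $\beta = \gamma \notin P_1$ is not justified and is false in general (the square $[P_3, P_1]$ with $P_1 - P_3 = \{\alpha,\beta\}$ has $\beta \in P_1$); fortunately your ``concretely'' computation treats both cases ($\beta$ addable to or removable from $P_1$), so the proof is unaffected — just delete that clause when writing it up.
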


\begin{proof}
Suppose that instead $P_3 \succ P_4$.
By considering the cardinalities of the $P_i$'s, we see that $P_1 \succ P_2 \prec P_3 \succ P_4 \prec P_1$.
Since $P_2 \neq P_4$, by Remark~\ref{rem:polygon_containment} we have $P_1 \supseteq P_2 \cup P_4 \supsetneq P_2$.
Since $|P_1| = |P_2| + 1$, we deduce $P_1 = P_2 \cup P_4$.
Similarly, $P_3 = P_2 \cup P_4$ contradicting the fact that this cycle is embedded.
Furthermore, if $\arc(\{P_1, P_2\}) \neq \arc(\{P_3, P_4\})$ then counting the number of components shows that $P_2 = P_4$.
Again, this contradicts this cycle being embedded.
\end{proof}

The $n$--\emph{cube graph} $C_n$ is the 1--skeleton of the standard $n$--cube.
In particular, an embedded $4$--cycle is a $C_2$.
By induction on $n$, the square lemma shows that any embedded cube graph $C$ in $\pol(S)$ contains a unique \emph{source} $C^+$ and a unique \emph{sink} $C^-$.
That is, for any polygonalisation $P$ in $C$ we have that $C^- \subseteq P \subseteq C^+$.

For polygonalisations $P \subseteq Q$, let
\[ [P, Q] \defeq \{R \in \pol(S) \; : \; P \subseteq R \subseteq Q\}. \]
In fact these are all the cubes that appear in $\pol(S)$:

\begin{lemma}[Characterisation of cubes]
\label{lem:cube_char}
If $P \subseteq Q$ then $[P, Q]$ forms the vertex set of an $n$--cube in $\pol(S)$ where $n = |P| - |Q|$.
Conversely, if $C$ is an embedded $n$--cube in $\pol(S)$ then the vertex set of $C$ is $[C^-, C^+]$. \qed
\end{lemma}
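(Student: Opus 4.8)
The plan is to prove the two assertions separately, with the forward direction (every $[P,Q]$ is an $n$-cube) being the substantive one and the converse following quickly from the source/sink discussion already in place.

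For the forward direction, I would argue by induction on $n = |Q| - |P|$. The base cases $n=0$ (a single vertex) and $n=1$ (an edge, by definition of $\pol(S)$) are immediate. For the inductive step, pick any arc $\alpha \in Q - P$; note $\alpha$ is removable from every $R \in [P,Q]$ that contains it, since $P \cup \{\alpha\} \subseteq R$ is a polygonalisation (Remark~\ref{rem:polygon_containment}) and $R - \{\alpha\} \supseteq P$ is too. Split $[P,Q]$ as the union of $[P, Q - \{\alpha\}]$ and $[P \cup \{\alpha\}, Q]$; by Remark~\ref{rem:polygon_containment} every element of $[P,Q]$ contains $\alpha$ or not, so this is a partition of the vertex set into two pieces each of which, by induction, spans an $(n-1)$-cube. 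The map $R \mapsto R \triangle \{\alpha\}$ is a bijection between the two pieces carrying each $R$ to an adjacent vertex (symmetric difference a single arc), and one checks it respects adjacency, so it identifies the two $(n-1)$-cubes and provides the ``parallel edges'' gluing them into an $n$-cube. Finally I must confirm that the cube complex structure of $\pol(S)$ actually fills this in: by definition an $n$-cube is glued whenever its entire $(n-1)$-skeleton is present, so it suffices to see inductively that all lower-dimensional faces — which are themselves of the form $[P', Q']$ with $P \subseteq P' \subseteq Q' \subseteq Q$ — are already cubes, and that two faces sharing the appropriate boundary are glued consistently; this is exactly what the induction delivers.

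For the converse, let $C$ be an embedded $n$-cube in $\pol(S)$. Its $1$-skeleton is an embedded $C_n$, so by the iterated Square Lemma (Lemma~\ref{lem:square_lemma}) it has a unique source $C^+$ and unique sink $C^-$ with $C^- \subseteq R \subseteq C^+$ for every vertex $R$ of $C$; in particular the vertex set of $C$ is contained in $[C^-, C^+]$. Counting cardinalities along any edge-path from $C^-$ to $C^+$ in $C$ gives $|C^+| - |C^-| = n$, so $[C^-, C^+]$ is the vertex set of an $n$-cube by the forward direction, which has exactly $2^n$ vertices — the same as $C$. Hence the two coincide.

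The main obstacle I anticipate is the bookkeeping in the inductive step: making sure that the ``split along $\alpha$'' decomposition genuinely realises $[P,Q]$ as a cube \emph{inside the cube complex $\pol(S)$} (i.e.\ that the relevant higher cell is actually glued in), rather than merely exhibiting a combinatorial cube graph on the vertex set. This requires being careful that the choice of $\alpha$ is immaterial and that the faces obtained from different choices agree, so that the link/gluing axioms are satisfied; phrasing the induction so that it simultaneously tracks ``all sub-intervals $[P',Q']$ are faces'' handles this cleanly.
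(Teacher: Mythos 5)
Your proof is correct and assembles exactly the ingredients the paper intends (it states the lemma with the proof omitted as immediate): Remark~\ref{rem:polygon_containment} together with the inductive gluing rule defining $\pol(S)$ for the forward direction, and the iterated square lemma with its source/sink consequence for the converse. One small step to tighten: ``counting cardinalities along any edge-path'' only yields $|C^+|-|C^-|\le n$; to get equality, note either that the $n$ edges of $C$ incident to $C^+$ remove $n$ distinct arcs (distinct by embeddedness), each of which misses $C^-$ since $C^-\subseteq R$ for every vertex $R$, or that Lemma~\ref{lem:square_lemma} orients each parallel class of edges consistently, so every step of a geodesic from $C^-$ to $C^+$ is an arc addition.
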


When $\partial S = \emptyset$, this characterisation of cubes enables us to apply Penner's argument for the contractibility of the fatgraph complex \cite[Theorem~2.5]{Penner} to $\pol(S)$.
Hence, in this case the polygonalisation complex $\pol(S)$ is contractible.

\subsection{The hyperplanes of \texorpdfstring{$\pol(S)$}{P(S)}}

Let $\sim$ denote the equivalence relation on the edges of $\pol(S)$ generated by $e \sim e'$ if and only if $e$ and $e'$ are opposite edges of some square in $\pol(S)$.
We say that $e$ and $e'$ are \emph{parallel} if $e \sim e'$.

\begin{definition}[{\cite[Definition~1.4]{Sageev}}]
A \emph{hyperplane} is the set of midcubes of $\pol(S)$ that meet edges in $[e]$, for some equivalence class of edges.
We write $\Hyp(\pol(S))$ for the set of all hyperplanes of $\pol(S)$.
\end{definition}

By the square lemma, if edges $e, e'$ are parallel then $\arc(e) = \arc(e')$.
Thus, the $\arc$ map descends to a well-defined map $\arc \from \Hyp(\pol(S)) \to \calA(S)$.
This map is surjective: for any $\alpha \in \calA(S)$ there is a triangulation $\calT$ from which $\alpha$ is removable and so $\arc(\{\calT, \calT - \{\alpha\} \}) = \alpha$.
One of the aims of this section is to prove that this map is also injective.
To achieve this we shall require some technical results.

\begin{lemma}[Pentagon detour lemma]
\label{lem:pentagon_detour}
Let $\calT$ and $\calT'$ be adjacent triangulations in $\flip(S)$ and $Q \defeq \calT \cap \calT'$.
Suppose there is an arc $\alpha \in Q$ that is removable from $\calT$ and $\calT'$ but not from $Q$.
Then there is a path $\calT = \calT_0, \calT_1, \calT_2, \calT_3, \calT_4 = \calT'$ such that $\alpha$ is removable from each $\calT_i$ and from each $Q_i \defeq \calT_{i-1} \cap \calT_i$.
\end{lemma}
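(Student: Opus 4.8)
The plan is to realise the flip $\calT\to\calT'$ as a single edge of a pentagon in $\flip(S)$ — the five triangulations of one pentagonal region — and then take the detour around the remaining four edges. First I would unpack the hypotheses. Since $\calT$ and $\calT'$ are adjacent, the complement of $Q$ consists of a single quadrilateral $R$ and $F(S)-2$ triangles, and $\calT=Q\cup\{\gamma\}$, $\calT'=Q\cup\{\gamma'\}$ for the two diagonals $\gamma,\gamma'$ of $R$. Recalling that an arc is removable from a polygonalisation exactly when its two sides lie in distinct complementary regions — a property inherited by every larger polygonalisation — the failure of $\alpha$ to be removable from $Q$ says that both sides of $\alpha$ face a single region of $Q$. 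That region is not a triangle: a triangle meeting $\alpha$ on both sides would be folded along $\alpha$ and, since adjoining $\gamma$ only subdivides $R$, would survive in $\calT$, contradicting removability of $\alpha$ from $\calT$. Hence both sides of $\alpha$ face $R$, so $\alpha$ occupies two of the four sides of $R$; a short case check, using that each of $\gamma$ and $\gamma'$ cuts $R$ into two triangles that must carry one $\alpha$-side each, forces these to be \emph{opposite} sides. So $\partial R$ reads $\alpha,\beta,\alpha,\delta$ cyclically with $\beta,\delta\neq\alpha$. If $\beta$ and $\delta$ were the same arc then $R$ would be a square with both pairs of opposite sides identified, forcing $S$ to be the once-marked torus (excluded since $F(S)\geq 3$); so $\beta\neq\delta$, the side of $\beta$ off $R$ abuts a distinct triangle $U_1$, and $\beta$ is removable from $Q$.

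Next I would build the pentagon. Set $P\defeq Q\setminus\{\beta\}$; its complement consists of $F(S)-2$ triangles together with the pentagon $\Pi\defeq R\cup_\beta U_1$, on whose boundary $\alpha$ still appears at two non-adjacent sides, while $\beta$ has become a diagonal of $\Pi$. The five triangulations of $\Pi$, each completed by the fixed multiarc $P$, are five distinct triangulations of $S$ forming an \emph{embedded} pentagon in $\flip(S)$ (consecutive ones differ by a flip carried out inside $\Pi$, non-consecutive ones by two arcs). Among them, $\calT=P\cup\{\beta,\gamma\}$ and $\calT'=P\cup\{\beta,\gamma'\}$ are precisely the two containing the diagonal $\beta$, hence adjacent around this pentagon; I take $\calT=\calT_0,\calT_1,\calT_2,\calT_3,\calT_4=\calT'$ to be the path running the other way around. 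Then each $Q_i=\calT_{i-1}\cap\calT_i$ is $P$ together with one of the four diagonals of $\Pi$ other than $\beta$.

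Finally I would check the removability assertions. The crux is the elementary fact that, for a pentagon and any prescribed pair of non-adjacent boundary sides, exactly one of its five diagonals fails to separate that pair; for $\Pi$ with the two $\alpha$-sides as the pair this exceptional diagonal is precisely $\beta$ (it cuts off the triangle bounded by $\beta$ and the two sides of $U_1$, leaving both $\alpha$-sides in the complementary quadrilateral). Since each $Q_i$ adjoins to $P$ one of the other four diagonals, it places the two sides of $\alpha$ in distinct complementary regions, so $\alpha$ is removable from each $Q_i$; because $Q_i,Q_{i+1}\subseteq\calT_i$ and removability is inherited on passing to a larger polygonalisation, $\alpha$ is then automatically removable from each $\calT_i$, which completes the argument.

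I expect the delicate part to be the unpacking step: pinning down that the region shared by the two sides of $\alpha$ is the flip quadrilateral $R$ and that $\alpha$ lies on opposite sides of it, and in parallel disposing of the degenerate configurations — marked points or boundary arcs of $R$ coinciding, or $U_1$ folded — which either collapse onto the excluded surfaces or are harmless because every argument about $\Pi$ afterwards is purely combinatorial. The pentagon bookkeeping in the middle step is routine once one fixes a cyclic labelling of $\partial\Pi$.
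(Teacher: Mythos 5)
Your proposal is correct and follows essentially the same route as the paper: you identify that $\alpha$ occupies opposite sides of the flip square of $Q$, enlarge across a removable side to a pentagon, and take the detour around the resulting five-cycle in $\flip(S)$. The only difference is that you spell out the removability check (exactly one diagonal of the pentagon, namely $\beta$, fails to separate the two $\alpha$-sides), which the paper delegates to its figure, and you should just note the symmetric relabelling when the chosen side of $R$ happens to lie on $\partial S$.
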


\begin{proof}
Let $\beta \defeq \arc(\{\calT, Q\})$.
Since $\alpha$ is not removable from $Q$ but is removable from $\calT$ and $\calT'$, it must appear as opposite sides of the unique square of $Q$.
Since $\calT$ has at least three triangles, this square has another side $\gamma$ that is removable from $Q$.
The arcs $\beta$ and $\gamma$ are therefore a pair of chords of a pentagon in $\calT$, as shown in Figure~\ref{fig:pentagon_detour}.
The path $\calT = \calT_0, \calT_1, \calT_2, \calT_3, \calT_4 = \calT'$ is the one obtained going around the natural five-cycle in $\flip(S)$ defined by $\beta$ and $\gamma$ in the other direction.
\end{proof}

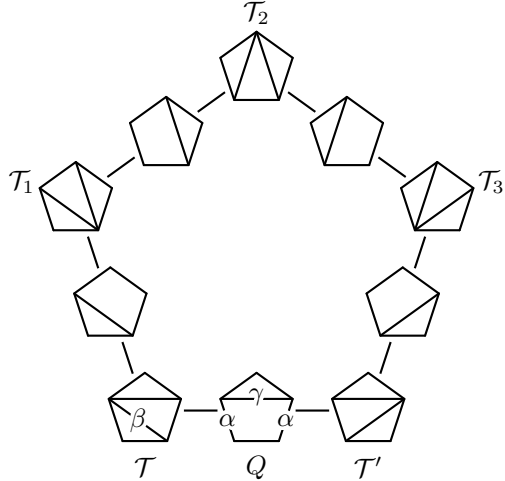
\begin{figure*}[ht]
	\centering
	\begin{tikzpicture}[scale=2,thick]

\pgfmathsetmacro{\inner}{0.25}
\pgfmathsetmacro{\outer}{1.25}

\foreach \i in {90,162,...,378} {\draw (\i:\outer) -- (\i+72:\outer);}

\foreach \i in {90,162,...,378} {
	\begin{scope}[shift={(\i:\outer)}]
		\draw [fill,white] (0,0) circle (\inner);
		\foreach \j in {90,162,...,378} {\draw [line cap=round] (\j:\inner) -- (\j + 72:\inner);}
		\draw (-2*\i+144-90:\inner) -- (-2*\i-90:\inner);
		\draw (-2*\i-90:\inner) -- (-2*\i-144-90:\inner);
	\end{scope}
	\begin{scope}[shift={($(\i:\outer)!0.5!(\i+72:\outer)$)}]
		\draw [fill,white] (0,0) circle (\inner);
		\foreach \j in {90,162,...,378} {\draw [line cap=round] (\j:\inner) -- (\j + 72:\inner);}
		\draw (-2*\i-90:\inner) -- (-2*\i-144-90:\inner);
	\end{scope}
}

\foreach \i in {1,2,3} {\node at (90+72+72-72*\i:\outer+1.5*\inner) {$\calT_\i$};}

\coordinate (A) at ($(90+72+72:\outer)$);
\coordinate (B) at ($(90+72+72+72:\outer)$);
\coordinate (C) at ($(A)!0.5!(B)$);
\node at ($(A)+(0,-1.5*\inner)$) {$\calT$};
\node at ($(C)+(0,-1.5*\inner)$) {$Q$};
\node at ($(B)+(0,-1.5*\inner)$) {$\calT'$};

\node at ($(C)+0.5*(90+72:\inner)+0.5*(90+72+72:\inner)$) {\contour*{white}{$\alpha$}};
\node at ($(C)+0.5*(90+72+72+72:\inner)+0.5*(90+72+72+72+72:\inner)$) {\contour*{white}{$\alpha$}};
\node at ($(C)+0.5*(90+72:\inner)+0.5*(90-72:\inner)+(0,0.015)$) {\contour*{white}{$\gamma$}};
\node at ($(A)+0.5*(90+72:\inner)+0.5*(90-72-72:\inner)$) {\contour*{white}{$\beta$}};

\end{tikzpicture}
	\caption{A detour around a pentagon to stay in $\partial \pol_\alpha(S)$.}
	\label{fig:pentagon_detour}
\end{figure*}

\begin{proposition}
\label{prop:posboundary_connected}
For each arc $\alpha \in \calA(S)$, the subcomplex $\partial \pol_\alpha(S)$ is connected.
\end{proposition}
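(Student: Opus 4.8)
The plan is to reduce to triangulations in which $\alpha$ is removable, transport the problem to the surface cut along $\alpha$, and then use the Pentagon Detour Lemma (Lemma~\ref{lem:pentagon_detour}) to promote a flip path to a path inside $\partial\pol_\alpha(S)$. Recall that $\alpha\in P$ is removable exactly when its interior meets two distinct polygons of $P$. Hence if $P\subseteq R\subseteq\calT$ and $\alpha$ is removable from $P$, then the two sides of $\alpha$ lie in distinct polygons of $P$ and therefore in distinct faces of $R$, so $\alpha$ is removable from $R$ and $R\in\partial\pol_\alpha(S)$. Thus, extending any $P\in\partial\pol_\alpha(S)$ to a triangulation $\calT$ by adding arcs one at a time gives a monotone path inside $\partial\pol_\alpha(S)$ from $P$ to $\calT$, and $\alpha$ is removable (equivalently flippable) in $\calT$. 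So it suffices to join any two triangulations $\calT,\calT'$ in which $\alpha$ is flippable by a path in $\partial\pol_\alpha(S)$; note such triangulations exist, by the discussion preceding Theorem~\ref{thrm:flip_rig}.

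\textbf{A flip path along which $\alpha$ stays flippable.}
Cut $S$ along $\alpha$ to get a surface $S_\alpha$ with two distinguished boundary edges $\sigma,\tau$ (the two copies of $\alpha$). Triangulations of $S$ containing $\alpha$ correspond to triangulations of $S_\alpha$, and flips of $S$ not involving $\alpha$ correspond to flips of $S_\alpha$; moreover $\alpha$ is flippable in a triangulation precisely when $\sigma$ and $\tau$ lie on distinct triangles of the corresponding triangulation of $S_\alpha$. If $\alpha$ separates $S$ this is automatic (then $\partial\pol_\alpha(S)=\pol_\alpha(S)$, which is connected, being a product of polygonalisation complexes of the components of $S_\alpha$). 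In general, a triangulation of $S_\alpha$ in which $\alpha$ fails to be flippable contains a unique triangle with edges $\sigma,\tau$ and a third (loop) edge $e$ — this is the self-folded triangle of Figure~\ref{fig:folded} with outer arc $e$ — and flipping $e$ destroys it and separates $\sigma$ from $\tau$. Using connectivity of $\flip(S_\alpha)$ together with this observation (and the fact that, for such an $e$, the triangulations of $S_\alpha$ containing $e$ form a flip-connected set, since cutting along $e$ detaches a rigid triangle), one reroutes any flip path between $\calT$ and $\calT'$ around the triangulations where $\alpha$ is not flippable. This produces a path $\calT=\calS_0,\calS_1,\dots,\calS_m=\calT'$ in $\flip(S)$ in which no flip is of $\alpha$ and $\alpha$ is flippable in every $\calS_i$.

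\textbf{Promotion to $\partial\pol_\alpha(S)$.}
For each $i$ put $Q_i\defeq\calS_{i-1}\cap\calS_i$. The flipped arc between $\calS_{i-1}$ and $\calS_i$ is flippable and distinct from $\alpha$, so $Q_i$ is a polygonalisation containing $\alpha$, and $\alpha$ is removable from both $\calS_{i-1}$ and $\calS_i$. If $\alpha$ is removable from $Q_i$ as well, then $\calS_{i-1},Q_i,\calS_i$ is a path in $\partial\pol_\alpha(S)$. Otherwise, $\calS_{i-1}$ and $\calS_i$ are adjacent in $\flip(S)$ with $\alpha\in Q_i$ removable from the endpoints but not from $Q_i$, so Lemma~\ref{lem:pentagon_detour} gives a path $\calS_{i-1}=\calT_0,\calT_1,\calT_2,\calT_3,\calT_4=\calS_i$ with $\alpha$ removable from each $\calT_j$ and each $\calT_{j-1}\cap\calT_j$; inserting the intersections yields a path from $\calS_{i-1}$ to $\calS_i$ lying entirely in $\partial\pol_\alpha(S)$. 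Concatenating over $i$ joins $\calT$ to $\calT'$ in $\partial\pol_\alpha(S)$, which completes the proof.

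\textbf{Main obstacle.}
The delicate step is the middle one: arranging the flip path from $\calT$ to $\calT'$ so that $\alpha$ remains flippable throughout and $\alpha$ is never flipped. Flippability of $\alpha$ is not preserved by arbitrary flips, and deleting the offending triangulations from $\flip(S)$ could a priori disconnect it, so the rerouting genuinely relies on the special structure of the triangulations where $\alpha$ is the inner arc of a self-folded triangle (the outer loop is always flippable, and flipping it restores flippability of $\alpha$), combined with a commuting-flips argument on the cut surface. By contrast, once this flip path is in hand, the reduction to triangulations and the application of the Pentagon Detour Lemma are routine.
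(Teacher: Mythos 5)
Your strategy is the same as the paper's: reduce to triangulations in which $\alpha$ is removable (your monotone-extension observation is exactly the paper's first step), produce a flip path from $\calT$ to $\calT'$ inside the stratum of $\alpha$ along which $\alpha$ stays flippable, and then use Lemma~\ref{lem:pentagon_detour} to make $\alpha$ removable from the intermediate polygonalisations $Q_i$. The first and third steps are correct as you present them. The problem is the middle step, which you flag as the main obstacle but do not actually prove. The two facts you invoke --- connectivity of $\flip(S_\alpha)$ and, for each loop $e$, flip-connectedness of the set $B_e$ of triangulations containing the triangle with sides $\sigma,\tau,e$ --- do not imply the rerouting claim: the bad set is a disjoint union of the connected sets $B_e$, but deleting a disjoint union of connected subcomplexes from a connected graph can perfectly well disconnect the complement, so ``one reroutes any flip path'' is precisely what remains to be shown.

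Concretely, the natural implementation of your idea (replace each bad triangulation $T$ in a maximal bad block by $T$ with $e$ flipped) breaks down exactly when the next flip in the path is of an arc $f$ lying on the triangle adjacent to $e$ across $e$: then the flips of $e$ and $f$ do not commute, the two ``$e$-flipped'' triangulations differ by two flips, and one must check that the interpolating triangulation supplied by the pentagon relation is again one in which $\alpha$ is flippable. Your ``commuting-flips argument on the cut surface'' only covers the case where the two flips have disjoint support; it covers neither this pentagon case nor the degenerate case where the flips entering and leaving a bad triangulation coincide (forcing a backtrack). This is exactly the three-case local analysis the paper performs (the $\beta=\gamma$ case, Figure~\ref{fig:pentagon}, and Figure~\ref{fig:square}), using the observation that the flip entering a bad triangulation must be of the outer loop of the folded triangle of Figure~\ref{fig:folded}; each modification strictly reduces the number of bad triangulations, which is what makes the induction close. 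Without carrying out this (or an equivalent) verification, your middle step is an assertion rather than a proof; the remaining parts of your write-up, including the aside that the separating case is automatic, are fine.
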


\begin{proof}
Suppose $P, Q \in \partial \pol_\alpha(S)$ are polygonalisations and let $\calT$ and $\calT'$ be triangulations containing $P$ and $Q$ respectively.
Note that $\alpha$ is removable from $\calT$ and $\calT'$.
Furthermore, $P$ and $\calT$ are in the same path-component of $\partial \pol_\alpha(S)$ (and likewise for $Q$ and $\calT'$).
Thus to show $\partial \pol_\alpha(S)$ is connected it suffices to show that $\calT$ and $\calT'$ are in the same path-component.

There is a path $\calT = \calT_0, \calT_1, \ldots, \calT_n = \calT'$ in $\flip(S)$ such that $\alpha \in \calT_i$ \cite[Corollary~2.15]{DisarloParlier}.
Call $\calT_i$ \emph{good} if $\alpha$ can be removed from it and \emph{bad} otherwise.
We describe how to modify this path to avoid any bad triangulations.

Suppose that $\calT_i$ is the first bad triangulation in this path.
Note that $0 < i < n$ since both $\calT_0$ and $\calT_n$ are good.
Let $\beta$ and $\gamma$ be the arcs of $\calT_i$ that can be flipped to obtain $\calT_{i-1}$ and $\calT_{i+1}$ respectively.
There are three possibilities to consider:
\begin{itemize}
	\item If $\beta \cup \gamma$ is supported on two triangles then, since $\beta$ cuts off a once-marked monogon, $\beta = \gamma$.
		Thus $\calT_{i-1} = \calT_{i+1}$ and so we simplify our path by removing $\calT_i$ and $\calT_{i+1}$.
	\item If $\beta \cup \gamma$ is supported on three triangles then they fill a pentagon in $\calT_i$.
		Thus we may replace $\calT_i$ with the good triangulations $\calT', \calT''$ as shown in Figure~\ref{fig:pentagon}.
	\item If $\beta \cup \gamma$ is supported on four triangles then the flips commute.
		Thus we may replace $\calT_i$ with the good triangulation $\calT'$ as shown in Figure~\ref{fig:square}.
\end{itemize}
Performing any of these modifications will reduce the number of bad triangulations.
Hence, by induction, we may assume $\alpha$ is removable from each $\calT_i$ along this path.
Taking the barycentric subdivision gives a path $\calT_0, Q_1, \calT_1, Q_2, \ldots, \calT_{n-1}, Q_n, \calT_n$ in $\pol_\alpha(S)$, where $Q_i \defeq \calT_{i-1} \cap \calT_i$.
Replacing subpaths with pentagon detours if necessary, we may assume that $\alpha$ is also removable from each $Q_i$.
This yields a path in $\partial \pol_\alpha(S)$ connecting $\calT$ to $\calT'$ as required.
\end{proof}

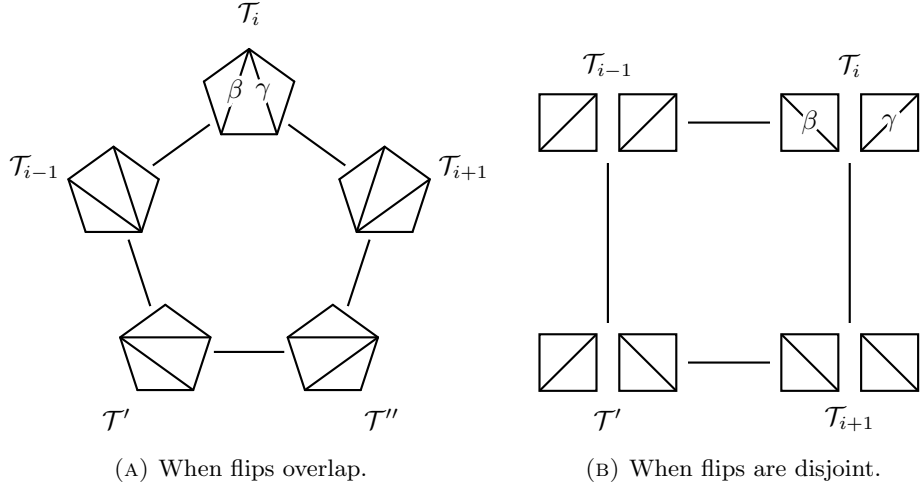
\begin{figure}[ht]
	\centering
	\begin{subfigure}[b]{0.5\textwidth}
		\centering
		\begin{tikzpicture}[scale=1.25,thick,baseline=0]

\pgfmathsetmacro{\inner}{0.5}
\pgfmathsetmacro{\outer}{1.5}

\foreach \i in {90,162,...,378} {\draw (\i:\outer) -- (\i+72:\outer);}

\foreach \i in {90,162,...,378} {
	\begin{scope}[shift={(\i:\outer)}]
		\draw [fill,white] (0,0) circle (\inner);
		\foreach \j in {90,162,...,378} {\draw [line cap=round] (\j:\inner) -- (\j + 72:\inner);}
		\draw (-2*\i+144-90:\inner) -- (-2*\i-90:\inner);
		\draw (-2*\i-90:\inner) -- (-2*\i-144-90:\inner);
	\end{scope}
}

\node at (90+72+72:\outer+1.75*\inner) {$\calT'$};
\node at (90+72:\outer+1.75*\inner) {$\calT_{i-1}$};
\node at (90:\outer+1.75*\inner) {$\calT_i$};
\node at (90-72:\outer+1.75*\inner) {$\calT_{i+1}$};
\node at (90-72-72:\outer+1.75*\inner) {$\calT''$};

\node at ($(90:\outer)+0.5*(90:\inner)+0.5*(90+72+72:\inner)$) {\contour*{white}{$\beta$}};
\node at ($(90:\outer)+0.5*(90:\inner)+0.5*(90-72-72:\inner)$) {\contour*{white}{$\gamma$}};

\end{tikzpicture}
		\caption{When flips overlap.}
		\label{fig:pentagon}
	\end{subfigure}%
	~
	\begin{subfigure}[b]{0.5\textwidth}
		\centering
		\begin{tikzpicture}[scale=1.5,thick,baseline=0]

\pgfmathsetmacro{\inner}{0.5}
\pgfmathsetmacro{\outer}{1.5}

\foreach \i in {45,135,...,315} {\draw (\i:\outer) -- (\i+90:\outer);}

\foreach \i in {45,135,...,315} {
	\begin{scope}[shift={(\i:\outer)}]
		\fill [white] (-0.2-\inner,-0.1-0.5*\inner) -- (0.2+\inner,-0.1-0.5*\inner) -- (0.2+\inner,0.1+0.5*\inner) -- (-0.2-\inner,0.1+0.5*\inner) -- cycle;
		\draw (0.1,-0.5*\inner) -- (0.1+\inner,-0.5*\inner) -- (0.1+\inner,0.5*\inner) -- (0.1,0.5*\inner) -- cycle;
		\draw (-0.1,-0.5*\inner) -- (-0.1-\inner,-0.5*\inner) -- (-0.1-\inner,0.5*\inner) -- (-0.1,0.5*\inner) -- cycle;
	\end{scope}
}

\draw ($(45:\outer)+(-0.1,-0.5*\inner)$) -- ($(45:\outer)+(-0.1-\inner,0.5*\inner)$);
\draw ($(45:\outer)+(0.1,-0.5*\inner)$) -- ($(45:\outer)+(0.1+\inner,0.5*\inner)$);
\node at ($(45:\outer)+(0,\inner)$) {$\calT_{i}$};
\node at ($(45:\outer)+(-0.1-0.5*\inner,0)$) {\contour*{white}{$\beta$}};
\node at ($(45:\outer)+(0.1+0.5*\inner,0)$) {\contour*{white}{$\gamma$}};

\draw ($(135:\outer)+(-0.1,0.5*\inner)$) -- ($(135:\outer)+(-0.1-\inner,-0.5*\inner)$);
\draw ($(135:\outer)+(0.1+\inner,0.5*\inner)$) -- ($(135:\outer)+(0.1,-0.5*\inner)$);
\node at ($(135:\outer)+(0,\inner)$) {$\calT_{i-1}$};

\draw ($(225:\outer)+(-0.1-\inner,-0.5*\inner)$) -- ($(225:\outer)+(-0.1,0.5*\inner)$);
\draw ($(225:\outer)+(0.1+\inner,-0.5*\inner)$) -- ($(225:\outer)+(0.1,0.5*\inner)$);
\node at ($(225:\outer)+(0,-\inner)$) {$\calT'$};

\draw ($(315:\outer)+(-0.1-\inner,0.5*\inner)$) -- ($(315:\outer)+(-0.1,-0.5*\inner)$);
\draw ($(315:\outer)+(0.1,0.5*\inner)$) -- ($(315:\outer)+(0.1+\inner,-0.5*\inner)$);
\node at ($(315:\outer)+(0,-\inner)$) {$\calT_{i+1}$};

\end{tikzpicture}
		\caption{When flips are disjoint.}
		\label{fig:square}
	\end{subfigure}
	\caption{Diverting a path around a bad triangulation.}
\end{figure}

Observe that $P, Q \in \partial \overline{\pol_\alpha}(S)$ are adjacent if and only if they form a square with $P \cup \{\alpha\}, Q \cup \{\alpha\} \in \partial \pol_\alpha(S)$.
Hence, by the above proposition, $\partial \overline{\pol_\alpha}(S)$ is also connected.

\begin{lemma}
\label{lem:strata_connected}
Let $\alpha \in \calA(S)$ be an arc.
Then the subcomplexes $\pol_\alpha(S)$ and $\overline{\pol_\alpha}(S)$ in $\pol(S)$ are both connected.
\end{lemma}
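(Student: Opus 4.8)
The plan is to deduce both assertions at once from the connectivity of $\pol(S)$ together with the connectivity of the two boundary subcomplexes, exploiting the fact that the only edges crossing between the strata $\pol_\alpha(S)$ and $\overline{\pol_\alpha}(S)$ are ``$\alpha$-edges''. Concretely, the inputs I would use are: $\pol(S)$ is connected; $\partial\pol_\alpha(S)$ is connected by Proposition~\ref{prop:posboundary_connected}, and hence (by the observation immediately following it) so is $\partial\overline{\pol_\alpha}(S)$; and, by Remark~\ref{rem:arc}, any edge $e$ of $\pol(S)$ with one endpoint in $\pol_\alpha(S)$ and the other in $\overline{\pol_\alpha}(S)$ satisfies $\arc(e)=\alpha$, so its endpoint in $\pol_\alpha(S)$ lies in $\partial\pol_\alpha(S)$ and its endpoint in $\overline{\pol_\alpha}(S)$ lies in $\partial\overline{\pol_\alpha}(S)$. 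The first claim here is immediate: $\alpha$ belongs to exactly one of the two polygonalisations, hence lies in their symmetric difference.

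Fix $\alpha$, write $Y$ for whichever of $\pol_\alpha(S)$ and $\overline{\pol_\alpha}(S)$ we want to show is connected, let $Z$ be the other one, and let $\partial Y\subseteq Y$ be the corresponding boundary subcomplex, which is connected by the above. Given vertices $P,P'$ of $Y$, choose a path $P=P_0,P_1,\dots,P_n=P'$ in $\pol(S)$ (possible since $\pol(S)$ is connected) minimising the number of indices $i$ with $P_i$ a vertex of $Z$. If that number is zero the path already lies in $Y$ and we are done. Otherwise take a maximal block $P_a,P_{a+1},\dots,P_b$ of consecutive vertices lying in $Z$; since $P_0$ and $P_n$ lie in $Y$ we have $1\le a\le b\le n-1$, and the crossing edges $\{P_{a-1},P_a\}$ and $\{P_b,P_{b+1}\}$ put $P_{a-1},P_{b+1}\in\partial Y$ by the previous paragraph. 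Since $\partial Y$ is connected, replace the subpath $P_{a-1},\dots,P_{b+1}$ by a path from $P_{a-1}$ to $P_{b+1}$ lying entirely in $\partial Y\subseteq Y$; as $\partial Y$ contains no vertex of $Z$, the new path has strictly fewer vertices in $Z$, contradicting minimality. Hence $Y$ is connected, and applying this with $Y=\pol_\alpha(S)$ and with $Y=\overline{\pol_\alpha}(S)$ proves the lemma.

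Essentially all of the difficulty sits in Proposition~\ref{prop:posboundary_connected} (together with the flip-graph detour arguments feeding it); once that is available, the reduction above is purely formal, and the only point needing care is to check that the rerouted segment stays inside $Y$ and creates no new excursions into $Z$ — which is automatic because it lies in $\partial Y$. I would also note that the $\pol_\alpha(S)$ case admits a more direct proof bypassing Proposition~\ref{prop:posboundary_connected}: extend $P$ and $P'$ to triangulations containing $\alpha$, join these through triangulations all containing $\alpha$ via \cite[Corollary~2.15]{DisarloParlier}, and pass to the barycentric subdivision; but the uniform argument above also handles $\overline{\pol_\alpha}(S)$, where no such shortcut is evident.
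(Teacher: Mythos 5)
Your argument is correct and rests on exactly the same ingredients as the paper's proof: connectivity of $\pol(S)$, connectivity of $\partial\pol_\alpha(S)$ (Proposition~\ref{prop:posboundary_connected}) and of $\partial\overline{\pol_\alpha}(S)$, and the fact (Remark~\ref{rem:arc}) that any edge joining the two strata is an $\alpha$--edge with endpoints in the respective boundary subcomplexes. The paper simply truncates a path from a vertex of one stratum to the other stratum at the first/last $\alpha$--crossing instead of running your minimal-path rerouting, so this is essentially the same proof with slightly different bookkeeping.
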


\begin{proof}
Suppose that $P \in \pol_\alpha(S)$ and $Q \in \overline{\pol_\alpha}(S)$.
For any edge-path in $\pol(S)$ from $P$ to $Q$, the arc $\alpha$ must be removed from a polygonalisation at some point along this path.
Hence such a path in $\pol(S)$ must cross $H_\alpha$.
Therefore there is a path within $\pol_\alpha(S)$ from $P$ to $\partial \pol_\alpha(S)$ and a path within $\overline{\pol_\alpha}(S)$ from $Q$ to $\partial \overline{\pol_\alpha}(S)$.
Since $\partial \pol_\alpha(S)$ and $\partial \overline{\pol_\alpha}(S)$ are both connected, $\pol_\alpha(S)$ and $\overline{\pol_\alpha}(S)$ are too.
\end{proof}

\begin{theorem}
\label{thrm:pol_sageev}
The map $\arc \from \Hyp(\pol(S)) \to \calA(S)$ is a bijection.
Moreover, for each arc $\alpha \in \calA(S)$, the corresponding hyperplane $H_\alpha$ is embedded, two-sided and separates $\pol(S)$ into two connected components: $\pol_\alpha(S)$ and $\overline{\pol_\alpha}(S)$.
\end{theorem}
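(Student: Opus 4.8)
The plan is to deduce everything from the three results already in hand: Lemma~\ref{lem:cube_char} (characterisation of cubes), Proposition~\ref{prop:posboundary_connected} (connectedness of $\partial\pol_\alpha(S)$), and Lemma~\ref{lem:strata_connected} (connectedness of $\pol_\alpha(S)$ and $\overline{\pol_\alpha}(S)$). Surjectivity of $\arc$ is already established, so the heart of the matter is injectivity: for a fixed $\alpha \in \calA(S)$ one must show that all edges $e$ with $\arc(e) = \alpha$ lie in a single parallelism class. By Remark~\ref{rem:arc}, the edges with $\arc(e) = \alpha$ are exactly the edges $e_P \defeq \{P, P - \{\alpha\}\}$ as $P$ ranges over the vertices of $\partial\pol_\alpha(S)$. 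Since $\partial\pol_\alpha(S)$ is connected, it then suffices to prove that $e_P \sim e_{P'}$ whenever $P$ and $P'$ are adjacent vertices of $\partial\pol_\alpha(S)$.

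For this step, assume without loss of generality that $P \succ P'$, so that $P = P' \cup \{\beta\}$ for some arc $\beta$; note $\beta \neq \alpha$ since $\alpha \in P \cap P'$. Because $P' \in \partial\pol_\alpha(S)$, the arc $\alpha$ is removable from $P'$, so $R \defeq P' - \{\alpha\} = P - \{\alpha, \beta\}$ is a polygonalisation. Now $R \subseteq P$ with $|P| - |R| = 2$, so by Lemma~\ref{lem:cube_char} the interval $[R, P]$ is the vertex set of a square, whose four vertices are $P$, $P - \{\alpha\}$, $P - \{\beta\} = P'$ and $R$. The edges $e_P = \{P, P - \{\alpha\}\}$ and $e_{P'} = \{P', R\}$ are opposite sides of this square, hence parallel. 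This proves injectivity, so $\arc$ is a bijection; for each $\alpha$ we write $H_\alpha$ for the hyperplane with $\arc(H_\alpha) = \alpha$.

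The remaining three properties all follow quickly from Lemma~\ref{lem:cube_char}. A cube of $\pol(S)$ is an interval $[C^-, C^+]$, and its edge-directions correspond bijectively to the (distinct) arcs of $C^+ \setminus C^-$, the edge in direction $\gamma$ having $\arc = \gamma$; hence each cube contains at most one midcube of $H_\alpha$ — one exactly when $\alpha \in C^+ \setminus C^-$ — which is precisely the statement that $H_\alpha$ is embedded. For two-sidedness, orient every edge $e_P$ from its endpoint $P - \{\alpha\}$ towards $P$; a direct check on squares with a pair of opposite $\alpha$-edges (these are exactly the squares $[R,P]$ built above) shows the two such edges are coherently oriented, so the normal bundle of $H_\alpha$ is trivial and $N(H_\alpha) \isom [-\tfrac{1}{2}, \tfrac{1}{2}] \times H_\alpha$. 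Finally, for separation, a cube $[C^-, C^+]$ meets $H_\alpha$ iff $\alpha \in C^+ \setminus C^-$; thus any cube with $\alpha \in C^-$ lies inside $\pol_\alpha(S)$ and any cube with $\alpha \notin C^+$ lies inside $\overline{\pol_\alpha}(S)$, so $H_\alpha$ is disjoint from both subcomplexes and every cube of $\pol(S)$ lies in $\pol_\alpha(S)$, in $\overline{\pol_\alpha}(S)$, or in $N(H_\alpha)$. Using two-sidedness, $N(H_\alpha) \setminus H_\alpha$ has two sides, one attached to $\pol_\alpha(S)$ and one to $\overline{\pol_\alpha}(S)$; combined with the connectedness of these subcomplexes from Lemma~\ref{lem:strata_connected}, this shows $\pol(S) \setminus H_\alpha$ has at most two components. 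It has at least two, because along an edge-path the containment of $\alpha$ changes exactly when an $\alpha$-edge (a point of $H_\alpha$) is crossed; more carefully, two-sidedness furnishes a locally constant ``side of $H_\alpha$'' function on all of $\pol(S) \setminus H_\alpha$ that separates $\pol_\alpha(S)$ from $\overline{\pol_\alpha}(S)$. Hence there are exactly two components, containing (and deformation retracting onto) $\pol_\alpha(S)$ and $\overline{\pol_\alpha}(S)$.

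I expect the injectivity step to be the only real obstacle: once the square $[R,P]$ is produced from the removability of $\alpha$ and Lemma~\ref{lem:cube_char}, the rest is bookkeeping with the cube characterisation, and the genuine geometric input is the connectedness statements of Proposition~\ref{prop:posboundary_connected} and Lemma~\ref{lem:strata_connected}. The one subtlety to watch in the separation argument is that a \emph{general} path in $\pol(S) \setminus H_\alpha$, not just an edge-path, cannot cross between the two sides — which is exactly what the locally constant side-function coming from two-sidedness guarantees.
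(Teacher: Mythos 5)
Your proposal is correct and follows essentially the same route as the paper: injectivity of $\arc$ via the connectedness of $\partial\pol_\alpha(S)$ (Proposition~\ref{prop:posboundary_connected}), with adjacent vertices of $\partial\pol_\alpha(S)$ producing the squares $[P'-\{\alpha\},P]$ that chain the $\alpha$--edges into one parallelism class, and separation via Lemma~\ref{lem:strata_connected}. The only (harmless) differences are bookkeeping: you establish two-sidedness directly by coherently orienting the $\alpha$--edges and then use it in the separation argument, whereas the paper argues separation first and deduces two-sidedness from it, and you spell out the locally constant side-function for non-edge paths which the paper leaves implicit.
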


\begin{proof}
To show that the map $\arc \from \Hyp(\pol(S)) \to \calA(S)$ is a bijection we must prove that edges $e, e'$ in $\pol(S)$ are parallel if and only if $\arc(e) = \arc(e')$.
As described above, the forwards direction holds trivially by the square lemma.

For the backwards direction, suppose that $e, e'$ are edges of $\pol(S)$ such that $\arc(e) = \arc(e') = \alpha$.
By Remark~\ref{rem:arc}, we deduce that $e = \{P, Q\}$ and $e' = \{P', Q'\}$ for some $P, P' \in \partial \pol_\alpha(S)$ and $Q, Q' \in \partial \overline{\pol_\alpha}(S)$.
By Proposition~\ref{prop:posboundary_connected} there is a path $p$ from $P$ to $P'$ in $\partial \pol_\alpha(S)$.
Now $\alpha$ can be removed from each polygonalisation in $p$ to obtain a parallel path $q$ from $Q$ to $Q'$ in $\partial \overline{\pol_\alpha}(S)$.
This gives a sequence of squares from which we can deduce that $e$ and $e'$ are parallel.

It is easy to verify that hyperplanes are embedded.
If a hyperplane self-intersects then a self-intersection must occur in some square.
Then all four edges in this square must correspond to adding/removing the same arc, which is impossible.

As in the proof of Lemma~\ref{lem:strata_connected}, any path from a vertex in $\pol_\alpha(S)$ to a vertex in $\overline{\pol_\alpha}(S)$ must cross $H_\alpha$, hence $H_\alpha$ separates $\pol_\alpha(S)$ and $\overline{\pol_\alpha}(S)$.
By Lemma~\ref{lem:strata_connected}, these are both connected and so $H_\alpha$ separates $\pol(S)$ into $\pol_\alpha(S)$ and $\overline{\pol_\alpha}(S)$.

Finally, each hyperplane must be two-sided since it is separating.
\end{proof}

\begin{corollary}
Let $\alpha \in \calA(S)$ be an arc.
Then the hyperplane $H_\alpha$ separates polygonalisations $P, Q \in \pol(S)$ if and only if $\alpha \in P \Delta Q$.
In particular, there are at most $2 E(S)$ hyperplanes separating any given pair of polygonalisations. \qed
\end{corollary}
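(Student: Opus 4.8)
The plan is to deduce both assertions immediately from Theorem~\ref{thrm:pol_sageev}. First I would observe that a hyperplane of $\pol(S)$ is a union of midcubes, and no midcube contains a vertex; hence every polygonalisation lies in $\pol(S) - H_\alpha$. By Theorem~\ref{thrm:pol_sageev} this complement is the disjoint union of the two connected components $\pol_\alpha(S)$ and $\overline{\pol_\alpha}(S)$, and by definition a vertex $P$ lies in $\pol_\alpha(S)$ precisely when $\alpha \in P$ and in $\overline{\pol_\alpha}(S)$ precisely when $\alpha \notin P$.

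Second, since $H_\alpha$ separates $\pol(S)$ into exactly these two pieces, it separates $P$ from $Q$ if and only if one of them lies in $\pol_\alpha(S)$ and the other in $\overline{\pol_\alpha}(S)$. By the previous paragraph this happens exactly when precisely one of $P$ and $Q$ contains $\alpha$, that is, when $\alpha \in P \Delta Q$. This gives the stated equivalence.

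For the final sentence I would use that $\arc \from \Hyp(\pol(S)) \to \calA(S)$ is a bijection (Theorem~\ref{thrm:pol_sageev}), so the set of hyperplanes separating $P$ and $Q$ is in bijection with the set of arcs in $P \Delta Q$. Since every polygonalisation extends to a triangulation, which has exactly $E(S)$ arcs, we have $|P|, |Q| \leq E(S)$, whence $|P \Delta Q| \leq |P| + |Q| \leq 2E(S)$.

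There is essentially no obstacle here: the whole content sits inside Theorem~\ref{thrm:pol_sageev}. The only point I would be careful to state is that vertices never meet a hyperplane, so that $P$ and $Q$ really do land in one of the two halfspaces determined by $H_\alpha$, and hence the dichotomy "$\alpha \in P \Delta Q$ or $\alpha \notin P \Delta Q$" corresponds exactly to "$H_\alpha$ separates them or does not".
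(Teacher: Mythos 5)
Your proof is correct and matches the paper's intent exactly: the corollary is stated with no written proof precisely because it follows immediately from Theorem~\ref{thrm:pol_sageev} in the way you describe (separation corresponds to $\alpha\in P\,\Delta\,Q$, and the hyperplane--arc bijection together with $|P|,|Q|\le E(S)$ gives the bound $2E(S)$). Nothing further is needed.
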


\begin{remark}
The hyperplanes of $\pol(S)$ cannot self-osculate, but they can interosculate.
For example, see $S_{0,0}^{2,1}$ in Table~\ref{tab:examples}.
\end{remark}

\subsection{The crossing graph of \texorpdfstring{$\pol(S)$}{P(S)}}

\begin{definition}
\label{def:hyp_cross}
Two hyperplanes $H_\alpha$ and $H_\beta$ \emph{cross}, denoted $H_\alpha \crosses H_\beta$, if
\[ \pol_\alpha(S) \cap \pol_\beta(S), \;
\pol_\alpha(S) \cap \overline{\pol_\beta}(S), \;
\overline{\pol_\alpha}(S) \cap \pol_\beta(S) \; \textrm{and} \;
\overline{\pol_\alpha}(S) \cap \overline{\pol_\beta}(S) \]
are all non-empty.
\end{definition}

There are several ways of characterising when hyperplanes cross.

\begin{proposition}
\label{prop:equivalent_crossing}
The following are equivalent:
\begin{enumerate}
\item The hyperplanes $H_\alpha$ and $H_\beta$ cross.
\item The arcs $\alpha$ and $\beta$ are distinct and disjoint but do not form a folded triangle.
\item There is a triangulation $\calT$ containing $\alpha \neq \beta$ such that $\calT - \{\alpha, \beta\}$ is a polygonalisation.
\end{enumerate}
\end{proposition}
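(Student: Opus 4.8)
The plan is to prove the cycle of implications $(3) \Rightarrow (1) \Rightarrow (2) \Rightarrow (3)$. Running the argument in this direction sidesteps the need for the (generally false) converse that crossing hyperplanes must pass through a common square: the only square produced appears in the easy step $(3) \Rightarrow (1)$.

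For $(3) \Rightarrow (1)$, set $P_0 \defeq \calT - \{\alpha, \beta\}$. Each of $P_0 \subseteq P_0 \cup \{\alpha\} \subseteq \calT$ and $P_0 \subseteq P_0 \cup \{\beta\} \subseteq \calT$ is a multiarc containing the polygonalisation $P_0$, so by Remark~\ref{rem:polygon_containment} the sets $P_0 \cup \{\alpha\}$, $P_0 \cup \{\beta\}$ and $\calT = P_0 \cup \{\alpha, \beta\}$ are all polygonalisations (indeed, by Lemma~\ref{lem:cube_char} these four vertices span a square in $\pol(S)$). They show that the four intersections in Definition~\ref{def:hyp_cross} are non-empty, since $\calT \in \pol_\alpha(S) \cap \pol_\beta(S)$, $P_0 \cup \{\alpha\} \in \pol_\alpha(S) \cap \overline{\pol_\beta}(S)$, $P_0 \cup \{\beta\} \in \overline{\pol_\alpha}(S) \cap \pol_\beta(S)$ and $P_0 \in \overline{\pol_\alpha}(S) \cap \overline{\pol_\beta}(S)$. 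Hence $H_\alpha \crosses H_\beta$.

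For $(1) \Rightarrow (2)$, first note that if $\alpha = \beta$ then $\pol_\alpha(S) \cap \overline{\pol_\beta}(S) = \emptyset$, so $\alpha \neq \beta$; and since $\pol_\alpha(S) \cap \pol_\beta(S) \neq \emptyset$ there is a multiarc containing both $\alpha$ and $\beta$, so $\alpha$ and $\beta$ are disjoint. Now suppose, for a contradiction, that $\{\alpha, \beta\}$ forms a folded triangle; by symmetry take $\alpha$ to be the unglued side (the loop in Figure~\ref{fig:folded}) and $\beta$ the doubled side, and let $b$ be the fold tip. Then $\alpha$ bounds a disc $\Delta$ whose interior contains the marked point $b$ and no other marked point, and whose only essential arc of $S$ is $\beta$. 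For any polygonalisation $P$ with $\alpha \in P$, the point $b$ must be an endpoint of some arc of $P$; that arc cannot cross $\alpha \in P$, so it lies inside $\Delta$ and therefore equals $\beta$. Thus $\pol_\alpha(S) \subseteq \pol_\beta(S)$, so $\pol_\alpha(S) \cap \overline{\pol_\beta}(S) = \emptyset$, contradicting $(1)$.

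For $(2) \Rightarrow (3)$, the heart of the matter, I would reduce to producing a single polygonalisation $Q$ that is disjoint from $\alpha$ and from $\beta$ and contains neither. Indeed, $Q \cup \{\alpha\}$, $Q \cup \{\beta\}$ and $Q \cup \{\alpha, \beta\}$ are then multiarcs containing $Q$, hence polygonalisations by Remark~\ref{rem:polygon_containment}, and extending $Q \cup \{\alpha, \beta\}$ to a triangulation $\calT$ gives $\calT - \{\alpha, \beta\} \supseteq Q$, a polygonalisation, which is exactly $(3)$. To build $Q$ one triangulates each complementary region of the multiarc $\{\alpha, \beta\}$ without ever reintroducing $\alpha$ or $\beta$, and then deletes $\alpha$ and $\beta$. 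The main obstacle — where I expect the real work to lie — is showing that this can always be done under hypothesis $(2)$: one must classify the ways in which removing $\{\alpha, \beta\}$ from a triangulation can fail to leave a polygonalisation (a marked point trapped inside the disc cut off by $\alpha$ or by $\beta$, or a complementary region degenerating below three sides) and verify that each such failure forces $\{\alpha, \beta\}$ to be a folded triangle, or occurs only on a surface with $F(S) < 3$. By comparison, $(3) \Rightarrow (1)$ and $(1) \Rightarrow (2)$ are routine consequences of Remark~\ref{rem:polygon_containment} and the anatomy of a folded triangle.
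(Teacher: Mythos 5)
Your implications $(3) \Rightarrow (1)$ and $(1) \Rightarrow (2)$ are correct and essentially the paper's: the four polygonalisations $\calT$, $\calT - \{\alpha\}$, $\calT - \{\beta\}$, $\calT - \{\alpha,\beta\}$ witness crossing, and your observation that in a folded triangle every polygonalisation containing the loop must contain the doubled arc (so $\pol_\alpha(S) \cap \overline{\pol_\beta}(S) = \emptyset$) is exactly how the paper rules out crossing. The problem is $(2) \Rightarrow (3)$, which you yourself flag as ``where I expect the real work to lie'': your reduction to finding a polygonalisation $Q$ disjoint from $\alpha \cup \beta$ and containing neither arc is fine, but the construction of $Q$ is only a plan, and the case analysis you promise (``a marked point trapped inside the disc cut off by $\alpha$ or by $\beta$, or a complementary region degenerating below three sides'') does not actually cover the failure mode that occurs. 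If you start from a triangulation $\calT$ containing $\alpha$ and $\beta$ and delete both, the genuine obstruction is that the complementary region can fail to be a \emph{disc}: after removing $\alpha$, the arc $\beta$ may appear as two opposite sides of a single square of $\calT - \{\alpha\}$, so that removing $\beta$ as well produces an annular region. Neither a trapped marked point nor a count of sides detects this, so as stated your classification would let a non-polygonalisation slip through, and the key implication remains unproved.

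For comparison, the paper's proof of $2 \Rightarrow 3$ is short and concrete: since $\alpha$ and $\beta$ do not form a folded triangle, one can choose a triangulation $\calT \supseteq \{\alpha,\beta\}$ in which each of $\alpha$, $\beta$ is individually removable; if $\beta$ fails to be removable from $Q \defeq \calT - \{\alpha\}$, then (as in Lemma~\ref{lem:pentagon_detour}) $\beta$ occurs as two opposite sides of a square of $Q$, and because $F(S) \geq 3$ this square has a third side $\gamma$ removable from $Q$; flipping $\gamma$ in $\calT$ yields a triangulation in which $\alpha$ and $\beta$ are simultaneously removable. To complete your argument you would need either to carry out this flip-repair (or an equivalent surgery) or to prove directly that the complement of $\{\alpha,\beta\}$ admits a triangulation none of whose deleted-arc regions degenerates, including the annulus case; at present that step is missing.
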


\begin{proof}
We follow a cycle of implications:

$1 \implies 2$:
There is a polygonalisation $P$ containing $\alpha$ and $\beta$ and so these arcs must be disjoint.
If $\alpha$ and $\beta$ form a folded triangle then every polygonalisation that contains $\alpha$ must contain $\beta$ or vice versa.
Therefore either $\pol_\alpha(S) \cap \overline{\pol_\beta}(S)$ or $\overline{\pol_\alpha}(S) \cap \pol_\beta(S)$ is empty and so $H_\alpha$ and $H_\beta$ do not cross.

$2 \implies 3$:
Since $\alpha$ and $\beta$ do not form a folded triangle, there is a triangulation $\calT$ containing $\alpha$ and $\beta$ in which both are removable.
Let $Q \defeq \calT - \{\alpha\}$.
Suppose that $\beta$ is not removable from $Q$.
Then, as in the proof of Lemma~\ref{lem:pentagon_detour}, a polygon of $Q$ is a square, two sides of which are $\beta$.
Since $\calT$ has at least three triangles, this square has another side $\gamma$ that is removable from $Q$.
We flip $\gamma$ in $\calT$ to obtain a new triangulation $\calT'$ in which both $\alpha$ and $\beta$ are simultaneously removable.

$3 \implies 1$:
The polygonalisations $\calT$, $\calT - \{\beta\}$, $\calT - \{\alpha\}$ and $\calT - \{\alpha, \beta\}$ show that the four sets required by Definition~\ref{def:hyp_cross} are all non-empty.
\end{proof}

\begin{definition}
The \emph{crossing graph} $\Cr(\pol(S))$ is the graph with a vertex for each hyperplane in $\pol(S)$.
Two hyperplanes $H, H' \in \Cr(\pol(S))$ are connected via an edge (of length one) if and only if $H \crosses H'$.
\end{definition}

By Proposition~\ref{prop:equivalent_crossing}, we immediately deduce that the crossing graph $\Cr(\pol(S))$ embeds into the arc graph $\calA(S)$.
To get control over this embedding we will need to consider the paths in $\calA(S)$ \emph{without folds}, that is, the ones in which no consecutive pair of arcs form a folded triangle.

\begin{lemma}
\label{lem:geodesic}
If $d(\alpha, \beta) = n \geq 3$ then there is a geodesic
\[ \alpha = \alpha_0, \alpha_1, \ldots, \alpha_n = \beta \]
in $\calA(S)$ that is without folds.
\end{lemma}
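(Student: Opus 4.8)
The plan is to start with an arbitrary geodesic $\alpha = \alpha_0, \alpha_1, \ldots, \alpha_n = \beta$ in $\calA(S)$ and remove the ``folds'' one by one, taking care that each modification does not increase the length and does not create new folds outside the region we are working on. Suppose $\alpha_{i-1}$ and $\alpha_i$ bound a folded triangle $T$; by the description in Figure~\ref{fig:folded}, $T$ is a once-punctured monogon with interior arc $\alpha_{i-1}$ (the loop) and boundary arc $\alpha_i$ (or vice versa), and in particular $\alpha_{i-1}$ is the \emph{unique} arc disjoint from both $\alpha_i$ and from everything on the ``inside'' — equivalently, the link of $\alpha_i$ restricted to arcs near $T$ is very constrained. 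The key observation is that, since $n \geq 3$, the arc $\alpha_i$ has a neighbour $\alpha_{i+1}$ further along the geodesic with $d(\alpha_{i+1},\beta) = n - i - 1$, and since $\alpha_{i-1}, \alpha_i$ fill only the once-punctured monogon, which is a proper subsurface (here one uses $F(S) \geq 3$), there is plenty of room to reroute.

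First I would handle the case where a fold occurs at the very start (or, symmetrically, at the end): if $\alpha_0 = \alpha$ and $\alpha_1$ bound a folded triangle, I replace $\alpha_1$ by another arc $\alpha_1'$ disjoint from $\alpha_0$ that is \emph{not} the special boundary/loop partner, chosen also disjoint from $\alpha_2$ if possible. Concretely: $\alpha_0$ together with the folded triangle determines a small subsurface; outside of it there is at least one arc, and among the arcs disjoint from $\alpha_0$ that are not its folded partner one can find one still at distance $\leq 1$ from $\alpha_2$, because $\alpha_2$ is disjoint from $\alpha_1$ and the set of arcs disjoint from $\alpha_1$ other than the fold partner is nonempty and ``spread out'' (this is where I expect to spend the most care — see below). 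This keeps the path a geodesic and reduces the number of folds.

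Next, for an interior fold between $\alpha_{i-1}$ and $\alpha_i$ with $1 \leq i \leq n-1$: I replace the pair $(\alpha_{i-1}, \alpha_i)$ — or just the single vertex $\alpha_i$ — by an arc $\alpha_i'$ disjoint from both $\alpha_{i-1}$ and $\alpha_{i+1}$ and \emph{not} forming a folded triangle with either. Such an $\alpha_i'$ exists: $\alpha_{i-1}$ and $\alpha_{i+1}$ are at distance $\leq 2$, so there is \emph{some} arc disjoint from both; the arcs disjoint from a fixed arc $\alpha_{i-1}$ other than its unique folded partner form a large set (again using $F(S) \geq 3$, so that cutting along $\alpha_{i-1}$ leaves a surface still carrying several arcs), and similarly for $\alpha_{i+1}$, so one can choose $\alpha_i'$ avoiding both forbidden partners. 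After the swap, the only consecutive pairs that changed are $(\alpha_{i-1}, \alpha_i')$ and $(\alpha_i', \alpha_{i+1})$, neither of which is a fold by construction, so the total number of folds strictly decreases. Iterating, after finitely many steps we reach a geodesic without folds.

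The step I expect to be the main obstacle is verifying that the replacement arc can \emph{simultaneously} avoid being the folded partner of its two neighbours while staying at the right distances — i.e.\ that the relevant ``link minus at most two special vertices'' is still nonempty. This is really a statement about the local structure of the arc complex near a low-complexity subsurface, and the cleanest way to nail it is to observe that a folded triangle on $\{\alpha_{i-1},\alpha_i\}$ occupies a once-punctured-monogon neighbourhood, so $\alpha_i$ lies in a one-holed-disk-with-one-puncture piece of $S \setminus \alpha_{i-1}$; since $F(S) \geq 3$ there is at least one more triangle, hence the complement of this neighbourhood is essential and supplies an arc $\gamma$ disjoint from $\alpha_{i-1}$ with $\{\alpha_{i-1},\gamma\}$ not folded, and a short case analysis (or a direct appeal to connectivity of $\calA$ of the relevant subsurface) upgrades this to the simultaneous condition. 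Once that local claim is in hand, the ``remove folds one at a time without lengthening'' induction is routine.
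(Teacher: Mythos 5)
Your overall plan (take any geodesic and remove folds one at a time) matches the paper's opening move, but the proof has a genuine gap exactly at the point you yourself flag as ``the main obstacle'': the replacement arc is never actually constructed. At an end fold $\{\alpha_0,\alpha_1\}$ the new arc $\alpha_1'$ must be \emph{simultaneously} disjoint from $\alpha_0$ and from $\alpha_2$ (otherwise the sequence is no longer even a path), essential, and fold-free with both neighbours; your justification is that the arcs disjoint from $\alpha_1$ other than ``the fold partner'' are ``nonempty and spread out'', plus an appeal to a short case analysis or connectivity of the arc complex of a subsurface. That is an assertion, not an argument, and it is where all the content of the lemma lies. Note also two concrete problems with the local picture you rely on: an arc joining two distinct marked points can be the inner arc of \emph{two} folded triangles (one loop based at each endpoint), so ``its unique folded partner'' is wrong; and the real constraint is not ``link minus at most two special vertices is nonempty'' but disjointness from $\alpha_2$, an arc which crosses $\alpha_0$ and can sit in an essentially unique position relative to the once-marked monogon. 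The paper resolves this by surgering along $\alpha_2$, splitting into the cases where $\alpha_2$ has one or two endpoints at the inner marked point $x$ (Figures~\ref{fig:geodesic_start_one} and~\ref{fig:geodesic_start_two}), and checking the surgered arcs are neither null-homotopic nor boundary-parallel using $F(S)\geq 3$; crucially, in the two-endpoint case the surgered arc can be boundary-parallel, and then one must \emph{also} replace $\alpha_2$ by a new arc $\alpha_2'$ before a suitable $\alpha_1'$ exists. So the repair may have to change two consecutive vertices, which your scheme (change only the single vertex $\alpha_i$) does not allow for and which contradicts your claim that one fold can always be removed by a one-vertex swap.

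Separately, you miss the simplification that makes the rest easy: interior folds cannot occur on a geodesic at all. If $\{\alpha_i,\alpha_{i+1}\}$ were folded for some $0<i<n-1$, then either $\intersection(\alpha_{i-1},\alpha_{i+1})=0$ or $\intersection(\alpha_i,\alpha_{i+2})=0$, shortcutting the geodesic. Hence only the two end pairs ever need attention. Your interior-fold step is therefore vacuous (so it does no harm), but it rests on the same unproven existence claim --- that an arc disjoint from two given arcs at distance two can always be chosen fold-free with both --- and that claim is not obvious (compare the paper's subsequent lemma on $d(\alpha,\beta)=2$, where a detour of length four is used precisely because a single good middle arc is not produced). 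As written, the proposal identifies the right reduction but leaves the decisive construction unproved.
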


\begin{proof}
Let
\[ \alpha = \alpha_0, \alpha_1, \ldots, \alpha_n = \beta \]
be a geodesic in $\calA(S)$ from $\alpha$ to $\beta$.

Suppose that $\{\alpha_i, \alpha_{i+1}\}$ form a folded triangle for some $0 < i < n-1$.
Then either $\intersection(\alpha_{i-1}, \alpha_{i+1}) = 0$ or $\intersection(\alpha_i, \alpha_{i+2}) = 0$, which contradicts this path being a geodesic.
Hence we need only consider folded triangles formed by $\{\alpha_0, \alpha_1\}$ and $\{\alpha_{n-1}, \alpha_n\}$.
We show that if the former occurs then there are arcs $\alpha_1', \alpha_2'$ such that
\[ \alpha_0, \alpha_1', \alpha_2', \alpha_3 \]
is a geodesic without folds.
We will take care to ensure that $\alpha_1'$ and $\alpha_2'$ are both arcs, that is, that they are not null-homotopic nor boundary-parallel.
Similarly we can replace the end of the geodesic if $\{\alpha_{n-1}, \alpha_n\}$ forms a folded triangle.

Now note that if $\{\alpha_0, \alpha_1\}$ forms a folded triangle then $\alpha_0$ must cut off a once-marked monogon.
Furthermore, $\alpha_2$ must have at least one endpoint on $x$, the inner marked point of this monogon, since it must intersect $\alpha_0$.

If $\alpha_2$ has exactly one endpoint on $x$ then let $\alpha_1'$ and $\alpha_1''$ be as shown in Figure~\ref{fig:geodesic_start_one}.
If $\alpha_1'$ or $\alpha_1''$ is null-homotopic then $\alpha_1 = \alpha_2$, which contradicts this path being a geodesic.
If $\alpha_1'$ and $\alpha_1''$ are both boundary-parallel then $F(S) = 2$, which is again a contradiction.
Hence without loss of generality $\alpha_1'$ is an arc.
Since this arc is disjoint from $\alpha_0$ and $\alpha_2$ and does not form a folded triangle with either, $\alpha_0, \alpha_1', \alpha_2, \alpha_3$ forms the required geodesic without folds.

On the other hand, if $\alpha_2$ has two endpoints on $x$, we construct $\alpha_1'$ by surgery as in Figure~\ref{fig:geodesic_start_two}.
We note that $\alpha_1'$ cannot be null-homotopic.
If it were then $\alpha_2$ cuts off a once-marked monogon and so $\alpha_3$ must also be disjoint from $\alpha_1$ as it is disjoint from $\alpha_2$.
Again, this contradicts this path being a geodesic.
Hence we need only consider the case in which $\alpha_1'$ is boundary-parallel, since otherwise $\alpha_1'$ is an arc and $\alpha_0, \alpha_1', \alpha_2, \alpha_3$ is a geodesic without folds.

If $\alpha_1'$ is boundary-parallel then $\alpha_2$ must cut off an annulus with one marked point on each boundary component.
Since it must meet $\alpha_1$, the arc $\alpha_3$ must be contained in the annulus cut off by $\alpha_2$.
However, as $\alpha_2$ is essential, there is an arc $\alpha_2'$ in the other connected component with one endpoint on $x$.
By construction $\alpha_2'$ does not cut off an annulus and it is disjoint from $\alpha_1, \alpha_2, \alpha_3$.
Hence $\alpha_0, \alpha_1, \alpha_2', \alpha_3$ is a geodesic and $\alpha_2'$ has a single endpoint on $x$.
Thus we can repeat the above argument to find a new arc $\alpha_1'$ such that
\[ \alpha_0, \alpha_1', \alpha_2', \alpha_3 \]
is a geodesic without folds.
\end{proof}

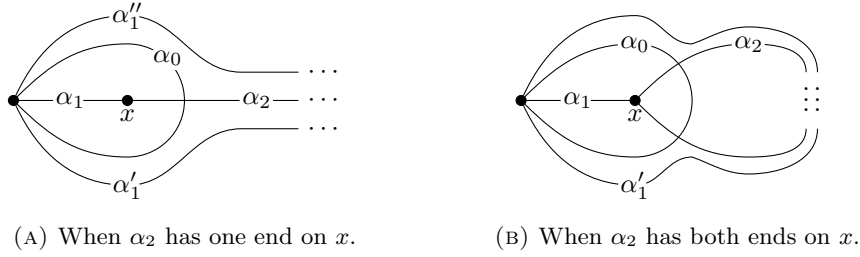
\begin{figure}[ht]
	\centering
	\begin{subfigure}[b]{0.5\textwidth}
		\centering
		\begin{tikzpicture}[scale=0.75, rotate=0]

\node [dot] (a) at (0,0) {};
\node [dot] (b) at (2,0) {};
\coordinate (l) at (2,1);
\coordinate (r) at (2,-1);
\coordinate (t) at (3,0);
\coordinate (t2) at (5,0);

\node [below] at (b) {$x$};
\draw (a) to [out=45,in=180] (l) to [out=0,in=90] node [yshift=1] {\contour*{white}{$\alpha_0$}} (t) to [out=270,in=0] (r) to [out=180,in=-45] (a);
\draw (b) -- node [near end] {\contour*{white}{$\alpha_2$}} (t2) node [right] {$\cdots$};
\draw (a) -- node {\contour*{white}{$\alpha_1$}} (b);
\draw (a) to [out=-65,in=180] ($(r)+(0,-0.5)$) node {\contour*{white}{$\alpha'_1$}} to [out=0,in=180] ($(t)+(1.0,-0.5)$) to [out=0,in=180] ($(t2)+(0,-0.5)$) node [right] {$\cdots$};
\draw (a) to [out=65,in=180] ($(l)+(0,0.5)$) node {\contour*{white}{$\alpha''_1$}} to [out=0,in=180] ($(t)+(1.0,0.5)$) to [out=0,in=180] ($(t2)+(0,0.5)$) node [right] {$\cdots$};

\end{tikzpicture}
		\caption{When $\alpha_2$ has one end on $x$.}
		\label{fig:geodesic_start_one}
	\end{subfigure}%
	~
	\begin{subfigure}[b]{0.5\textwidth}
		\centering
		\begin{tikzpicture}[scale=0.75, rotate=0]

\node [dot] (a) at (0,0) {};
\node [dot] (b) at (2,0) {};
\coordinate (l) at (2,1);
\coordinate (r) at (2,-1);
\coordinate (t) at (3,0);
\coordinate (t2) at (5,0);

\coordinate (l2) at (4,1);
\coordinate (r2) at (4,-1);
\coordinate (t3a) at (5,0.5);
\coordinate (t3b) at (5,-0.5);

\node [below] at (b) {$x$};
\draw (a) to [out=45,in=180] (l) node {\contour*{white}{$\alpha_0$}} to [out=0,in=90] (t) to [out=270,in=0] (r) to [out=180,in=-45] (a);
\draw (b) to [out=45,in=180] (l2) node {\contour*{white}{$\alpha_2$}} to [out=0,in=90] (t3a);
\node at (5,0) {\contour*{white}{$\rvdots$}};
\node at (5.2,0) {\contour*{white}{$\rvdots$}};
\draw (b) to [out=-45,in=180] (r2) to [out=0,in=270] (t3b);
\draw (a) -- node {\contour*{white}{$\alpha_1$}} (b);
\draw (a) to [out=-65,in=180] ($(r)+(0,-0.5)$) node {\contour*{white}{$\alpha'_1$}} to [out=0,in=180] ($(t)+(0,-1)$) to [out=-20,in=180] ($(r2)+(0,-0.3)$) to [out=0,in=270] ($(t3b)+(0.2,0)$);
\draw (a) to [out=65,in=180] ($(l)+(0,0.5)$) to [out=0,in=180] ($(t)+(0,1)$) to [out=20,in=180] ($(l2)+(0,0.3)$) to [out=0,in=90] ($(t3a)+(0.2,0)$);

\end{tikzpicture}
		\caption{When $\alpha_2$ has both ends on $x$.}
		\label{fig:geodesic_start_two}
	\end{subfigure}
	\caption{A detour to avoid a once-marked monogon.}
\end{figure}

By Proposition~\ref{prop:equivalent_crossing}, we have $d(H_\alpha, H_\beta) \geq d(\alpha, \beta)$.
On the other hand, Lemma~\ref{lem:geodesic} shows that if $d(\alpha, \beta) \geq 3$ then $d(H_\alpha, H_\beta) \leq d(\alpha, \beta)$.
Thus, on the large scale, the natural map $\jmath: \Cr(\pol(S)) \hookrightarrow \calA(S)$ preserves distances.
Furthermore, from Proposition~\ref{prop:equivalent_crossing}, if $d(\alpha, \beta) = 1$ then $d(H_\alpha, H_\beta) \leq 2$.
The final possibility is the following:

\begin{lemma}
If $d(\alpha, \beta) = 2$ then $d(H_\alpha, H_\beta) \leq 4$.
\end{lemma}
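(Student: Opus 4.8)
The plan is to reduce the $d(\alpha,\beta)=2$ case to situations already handled, by introducing an auxiliary arc and applying the tools built up for connectedness of $\partial\pol_\alpha(S)$. Since $d(\alpha,\beta)=2$, there is an arc $\gamma$ disjoint from both $\alpha$ and $\beta$ with $\intersection(\alpha,\beta)\neq 0$. If neither $\{\alpha,\gamma\}$ nor $\{\beta,\gamma\}$ forms a folded triangle, then by Proposition~\ref{prop:equivalent_crossing} we have $H_\alpha\crosses H_\gamma$ and $H_\gamma\crosses H_\beta$, giving $d(H_\alpha,H_\beta)\leq 2\leq 4$ immediately. So the whole difficulty is concentrated in the degenerate case where every common neighbour $\gamma$ of $\alpha$ and $\beta$ in $\calA(S)$ forms a folded triangle with $\alpha$ (or with $\beta$).

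In that degenerate case, I would argue as in the proof of Lemma~\ref{lem:geodesic}: if $\{\alpha,\gamma\}$ forms a folded triangle then $\gamma$ cuts off a once-marked monogon containing $\alpha$. I would then perform a surgery on $\gamma$ analogous to Figures~\ref{fig:geodesic_start_one} and~\ref{fig:geodesic_start_two} to produce an arc $\gamma'$ that is still disjoint from $\alpha$ and $\beta$, does \emph{not} form a folded triangle with $\alpha$, and (using $F(S)\geq 3$ to rule out the null-homotopic and boundary-parallel degeneracies, exactly as in Lemma~\ref{lem:geodesic}) is a genuine arc. If in addition $\{\gamma',\beta\}$ is not folded we are done as above with $d(H_\alpha,H_\beta)\leq 2$. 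Otherwise $\{\gamma',\beta\}$ is folded, so $\beta$ cuts off a once-marked monogon; performing the symmetric surgery near $\beta$ produces $\gamma''$ disjoint from $\alpha,\beta$ with $\{\alpha,\gamma''\}$ and $\{\beta,\gamma''\}$ both non-folded provided the two monogons are disjoint. Each such successful surgery yields $d(H_\alpha,H_\beta)\leq 2$; the only way to be forced up to a longer bound is when the monogon cut off by $\gamma$ around $\alpha$ and the monogon cut off around $\beta$ interact, and this is precisely where the bound $4$ rather than $2$ comes from.

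When the surgeries genuinely conflict, I would instead build an explicit length-four path in $\Cr(\pol(S))$ by passing through a triangulation: choose a triangulation $\calT$ with $\alpha,\gamma\in\calT$ in which both are removable (possible by the second bullet of Remark~\ref{rem:polygon_containment} plus the flip argument from the proof of $2\implies 3$ in Proposition~\ref{prop:equivalent_crossing}), then move within $\flip(S)$ — using the flip moves of Figures~\ref{fig:pentagon} and~\ref{fig:square} and the pentagon detour of Lemma~\ref{lem:pentagon_detour} to stay in the relevant strata — to a triangulation $\calT'$ containing $\beta,\gamma'$ with both removable. Along the way the hyperplanes successively crossed give a path $H_\alpha, H_\gamma, H_{\gamma'}, H_\beta$ or similar, of length at most $4$; each consecutive pair crosses because the corresponding arcs co-occur in a triangulation with the right removability, i.e.\ condition (3) of Proposition~\ref{prop:equivalent_crossing}.

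The main obstacle I expect is the bookkeeping in the doubly-degenerate case: controlling the topology when both $\alpha$ and $\beta$ live inside once-marked monogons of a common neighbour, making sure the surgered arcs $\gamma',\gamma''$ are essential (this is where $F(S)\geq 3$ is used, as in Lemma~\ref{lem:geodesic}), and verifying that the intermediate hyperplanes really do pairwise cross rather than merely being adjacent to $H_\alpha$ and $H_\beta$ in $\calA(S)$. Everything else is a direct application of Proposition~\ref{prop:equivalent_crossing} together with the already-established connectedness of $\partial\pol_\alpha(S)$.
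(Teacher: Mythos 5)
Your first case (a common neighbour $\gamma$ unfolded with both $\alpha$ and $\beta$ gives $d(H_\alpha,H_\beta)\leq 2$) agrees with the paper, but the degenerate cases, which are where the content lies, have genuine problems. First, your topological claim is backwards: if $\{\alpha,\gamma\}$ is folded then it is $\alpha$, not $\gamma$, that cuts off the once-marked monogon (with $\gamma$ the inner arc of Figure~\ref{fig:folded}); if $\gamma$ were the loop then $\beta$, being disjoint from $\gamma$ and meeting $\alpha$, would be trapped in the monogon and hence could not intersect $\alpha$. Second, your plan to surger until you find a \emph{single} arc disjoint from both $\alpha$ and $\beta$ and unfolded with both cannot always succeed: for example, on the twice-marked torus take $\alpha$ a loop at $v$ cutting off a monogon around $x$ and $\beta$ a loop at $x$ cutting off a monogon around $v$; every arc disjoint from both must run between $x$ and $v$ inside the monogons and so is folded with $\alpha$ or $\beta$, whence $d(H_\alpha,H_\beta)>2$ and your fallback is unavoidable. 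Third, that fallback does not work as described: moving through $\flip(S)$ from $\calT$ to $\calT'$ gives no control on distance in $\Cr(\pol(S))$; the proposed path $H_\alpha,H_\gamma,H_{\gamma'},H_\beta$ begins with the non-edge $H_\alpha H_\gamma$, since $\{\alpha,\gamma\}$ is folded and so these hyperplanes do not cross by Proposition~\ref{prop:equivalent_crossing}; and there is no triangulation containing a folded pair in which both arcs are removable, because the inner arc of a folded triangle meets only that triangle on both sides while the loop is present.

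The missing idea, which is how the paper proceeds, is to drop the requirement that intermediate arcs be disjoint from \emph{both} endpoints and instead keep $\gamma$ in the middle of the path. If both $\{\alpha,\gamma\}$ and $\{\gamma,\beta\}$ are folded, then $\alpha$ and $\beta$ both cut off once-marked monogons, and since $F(S)\geq 3$ one can choose $\gamma'$ disjoint from and unfolded with $\alpha$ and $\gamma$, and $\gamma''$ disjoint from and unfolded with $\gamma$ and $\beta$ (with no disjointness demanded between $\gamma'$ and $\beta$, or $\gamma''$ and $\alpha$). The fold-free path $\alpha,\gamma',\gamma,\gamma'',\beta$ in $\calA(S)$ then pulls back, edge by edge via Proposition~\ref{prop:equivalent_crossing}, to a path $H_\alpha,H_{\gamma'},H_\gamma,H_{\gamma''},H_\beta$ in $\Cr(\pol(S))$, giving $d(H_\alpha,H_\beta)\leq 4$; the case of a single fold is handled the same way with one buffer arc, giving $d(H_\alpha,H_\beta)\leq 3$. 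Your write-up never produces such a fold-free path in the genuinely degenerate situation, so as it stands the proof has a gap exactly where the bound $4$ is needed.
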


\begin{proof}
Suppose that $\alpha, \gamma, \beta$ is a geodesic.
There are now three cases to consider.
First, if this geodesic is without folds then $H_\alpha, H_\gamma, H_\beta$ is a geodesic in $\Cr(\pol(S))$ and so $d(H_\alpha, H_\beta) = 2$.
Second, if $\{\alpha, \gamma\}$ and $\{\gamma, \beta\}$ are both folded triangles then necessarily $\alpha$ and $\beta$ both cut off once-marked monogons.
Since $F(S) \geq 3$, there are disjoint arcs $\gamma'$ and $\gamma''$ such that 
\[ \intersection(\gamma', \alpha) = \intersection(\gamma', \gamma) = 0 = \intersection(\gamma'', \gamma) = \intersection(\gamma'', \beta) \]
and the path $\alpha, \gamma', \gamma, \gamma'', \beta$ in $\calA(S)$ is without folds.
Hence by Proposition~\ref{prop:equivalent_crossing} it pulls back to a path $H_\alpha, H_{\gamma'}, H_{\gamma}, H_{\gamma''}, H_\beta$ in $\Cr(P(S))$ and so $d(H_\alpha, H_\beta) \leq 4$.
Third, if the geodesic contains a unique folded triangle then without loss of generality it is formed by $\{\alpha, \gamma\}$.
In this case $\alpha$ must cut off a once-marked monogon (otherwise $\alpha$ and $\beta$ would be disjoint).
Hence again there is an arc $\gamma'$, disjoint from both $\alpha$ and $\gamma$, such that $\alpha, \gamma', \gamma, \beta$ is a path without folds in $\calA(S)$.
Again this pulls back to a path $H_\alpha, H_{\gamma'}, H_{\gamma}, H_\beta$ in $\Cr(P(S))$ and so $d(H_\alpha, H_\beta) \leq 3$.
\end{proof}


\begin{corollary}
\label{cor:qi}
The map $\jmath \from \Cr(\pol(S)) \to \calA(S)$ is a $(1, 2)$--quasi-isometry. \qed
\end{corollary}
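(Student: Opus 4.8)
\emph{Proof proposal.} The plan is to unpack the definition of a $(1,2)$--quasi-isometry and to observe that essentially all of the work has already been assembled in Proposition~\ref{prop:equivalent_crossing}, Lemma~\ref{lem:geodesic} and the lemma immediately preceding this corollary; what remains is to package these estimates into a single two-sided inequality and to check coarse surjectivity. Throughout, write $\jmath(H_\alpha) = \alpha$, so that $\jmath$ is the map on vertices induced by $\arc \from \Hyp(\pol(S)) \to \calA(S)$.

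\emph{Lower bound.} First I would show $d_{\calA(S)}(\alpha,\beta) \le d_{\Cr(\pol(S))}(H_\alpha, H_\beta)$. Given an edge-path $H_\alpha = H_{\alpha_0}, H_{\alpha_1}, \dots, H_{\alpha_k} = H_\beta$ in $\Cr(\pol(S))$, consecutive hyperplanes cross, so by the implication $1 \implies 2$ of Proposition~\ref{prop:equivalent_crossing} the arcs $\alpha_{i-1}$ and $\alpha_i$ are distinct and disjoint; hence $\alpha = \alpha_0, \dots, \alpha_k = \beta$ is an edge-path in $\calA(S)$, whence $d_{\calA(S)}(\alpha, \beta) \le k$. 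Taking the infimum over such paths gives the bound (this is the inequality $d(H_\alpha, H_\beta) \ge d(\alpha, \beta)$ already noted in the text).

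\emph{Upper bound.} Next I would bound $d_{\Cr(\pol(S))}(H_\alpha, H_\beta)$ from above by splitting into cases according to $n \defeq d_{\calA(S)}(\alpha, \beta)$. If $n = 0$ then $\alpha = \beta$, so $H_\alpha = H_\beta$ by Theorem~\ref{thrm:pol_sageev} and there is nothing to prove. If $n = 1$ then $d(H_\alpha, H_\beta) \le 2$ by the remark drawn from Proposition~\ref{prop:equivalent_crossing} just before the previous lemma. If $n = 2$ then $d(H_\alpha, H_\beta) \le 4$ by that previous lemma. Finally, if $n \ge 3$, Lemma~\ref{lem:geodesic} supplies a geodesic $\alpha = \alpha_0, \dots, \alpha_n = \beta$ in $\calA(S)$ without folds; by the implication $2 \implies 1$ of Proposition~\ref{prop:equivalent_crossing} each consecutive pair $H_{\alpha_{i-1}}, H_{\alpha_i}$ crosses, so this lifts to an edge-path in $\Cr(\pol(S))$ and $d(H_\alpha, H_\beta) \le n$. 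In every case $d(H_\alpha, H_\beta) \le n + 2$, so together with the lower bound,
\[ d_{\calA(S)}(\alpha, \beta) \;\le\; d_{\Cr(\pol(S))}(H_\alpha, H_\beta) \;\le\; d_{\calA(S)}(\alpha, \beta) + 2, \]
which is precisely (a rearrangement of) the $(1,2)$ distance estimate for $\jmath$.

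\emph{Coarse surjectivity and the tight point.} It remains to note that $\jmath$ is onto: it is the map induced by $\arc$, which is a bijection on vertices by Theorem~\ref{thrm:pol_sageev}, so every vertex of $\calA(S)$ lies in the image. Combining with the displayed estimate finishes the proof. There is no serious obstacle here — the content is entirely in the cited lemmas — but the one point requiring attention is that the additive constant $2$ is sharp: it is forced by the case $n = 2$ with two folded triangles, where the preceding lemma gives only $d(H_\alpha, H_\beta) \le 4 = n + 2$, so no smaller additive constant is compatible with multiplicative constant $1$.
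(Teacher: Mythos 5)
Your proposal is correct and is essentially the paper's own argument: the corollary is stated with \qed precisely because it is the assembly of the lower bound from Proposition~\ref{prop:equivalent_crossing}, the case $d(\alpha,\beta)\geq 3$ from Lemma~\ref{lem:geodesic}, and the cases $d(\alpha,\beta)=1,2$ from the remarks and lemma immediately preceding it, exactly as you have packaged them, with surjectivity on vertices coming from Theorem~\ref{thrm:pol_sageev}. Your closing aside about sharpness of the additive constant $2$ is not actually established (the preceding lemma only gives an upper bound of $4$, not an example realising it), but it plays no role in the proof.
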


Hence $\Cr(\pol(S))$ has the same large scale geometry as $\calA(S)$.
For example, when $\partial S = \emptyset$, Masur--Schleimer showed that $\calA(S)$ is hyperbolic \cite[Theorem~20.2]{MasurSchleimer}.
Hensel--Przytycki--Webb later showed that in this case in fact every geodesic triangle in $\calA(S)$ has a $7$--centre \cite[Theorem~1.2]{HenselPrzytyckiWebb} and so the same is true for $\Cr(\pol(S))$.

For ease of notation, for a hyperplane $H \in \Hyp(\pol(S))$ let $\lnk(H)$ denote the subgraph of $\Cr(\pol(S))$ induced by
\[ \{ H' \in \Hyp(\pol(S)) \; : \; H \crosses H' \} \]

We will now use the crossing graph to prove that, generically, different surfaces have different polygonalisation complexes.
\begin{lemma}
\label{lem:characterise_folded}
Arcs $\alpha$ and $\beta$ form a folded triangle, as shown in Figure~\ref{fig:folded}, if and only if $\lnk(H_\alpha) \subsetneq \lnk(H_\beta)$.
\end{lemma}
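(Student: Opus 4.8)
The plan is to prove the two implications separately, using Proposition~\ref{prop:equivalent_crossing} to translate crossing of hyperplanes into the combinatorial condition on arcs. Throughout, recall that $H_\gamma \crosses H_\delta$ exactly when $\gamma$ and $\delta$ are distinct, disjoint, and do not form a folded triangle; so $\lnk(H_\gamma)$ is naturally identified with the set of arcs $\delta \neq \gamma$ disjoint from $\gamma$ that do not bound a folded triangle together with $\gamma$.

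First I would prove the forward direction. Suppose $\alpha$ and $\beta$ form a folded triangle as in Figure~\ref{fig:folded}, so $\alpha$ cuts off a once-marked monogon whose inner marked point is $x$, and $\beta$ is the arc bounding the monogon together with the outer vertex. I claim $\lnk(H_\alpha) \subseteq \lnk(H_\beta)$. Let $\delta \in \lnk(H_\alpha)$, i.e.\ $\delta \neq \alpha$ is disjoint from $\alpha$ and does not form a folded triangle with $\alpha$. Disjointness from $\alpha$ forces $\delta$ to lie outside the once-marked monogon, hence $\delta$ has no endpoint on $x$ and is automatically disjoint from $\beta$. Moreover $\delta \neq \beta$: indeed $\beta$ \emph{does} form a folded triangle with $\alpha$, whereas $\delta$ does not. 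Finally $\delta$ cannot form a folded triangle with $\beta$, because $\alpha$ is an arc disjoint from $\beta$ lying strictly inside the folded-triangle region that $\beta$ and $\delta$ would cut off (if they bounded one), contradicting that the interior of a triangle is embedded and contains no arcs of a polygonalisation — more carefully, any polygonalisation containing $\beta$ and $\delta$ can be completed without using $\alpha$ on one side but must use it on the other, so one of the four intersections in Definition~\ref{def:hyp_cross} survives. So $\delta \in \lnk(H_\beta)$, giving the inclusion; and the inclusion is proper since $\beta \in \lnk(H_\beta)$? — no: $\beta \notin \lnk(H_\beta)$ as a vertex is never adjacent to itself. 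Instead, properness follows by exhibiting an arc disjoint from $\beta$ and not folding with $\beta$ which \emph{does} fold with $\alpha$ or is not disjoint from $\alpha$: the arc $\alpha$ itself works, since $\alpha \in \lnk(H_\beta)$ (as $\alpha$ does not form a folded triangle with $\beta$ when the roles are as in Figure~\ref{fig:folded}, reading it correctly: $\beta$ is the short side, $\alpha$ the flippable-after side) — wait, I must be careful which arc is which; the statement of Lemma~\ref{lem:characterise_folded} is $\lnk(H_\alpha) \subsetneq \lnk(H_\beta)$ with $\alpha$ the arc cutting off the monogon and $\beta$ the other side. Here $\alpha \notin \lnk(H_\beta)$ precisely because $\{\alpha,\beta\}$ is a folded triangle, so that is \emph{not} the witness for properness. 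The correct witness: take any arc $\varepsilon$ with an endpoint at $x$ that is disjoint from $\beta$ and does not fold with $\beta$; such $\varepsilon$ exists because $\beta$ separates off the once-marked monogon, and inside a neighbourhood one finds an arc from $x$ crossing $\alpha$ but disjoint from $\beta$. Then $\varepsilon \in \lnk(H_\beta) \setminus \lnk(H_\alpha)$ since $\varepsilon$ meets $\alpha$. Existence of such $\varepsilon$ is where the hypothesis $F(S) \geq 3$ enters.

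For the converse, I would prove the contrapositive: if $\alpha$ and $\beta$ do \emph{not} form a folded triangle, then $\lnk(H_\alpha)$ is not properly contained in $\lnk(H_\beta)$. Two cases. If $\alpha$ and $\beta$ are not disjoint or are equal, then there is an arc disjoint from one but not the other (or $\alpha = \beta$, trivially), so neither link contains the other; in particular no proper containment. If $\alpha$ and $\beta$ are disjoint and distinct and do not fold, then $H_\alpha \crosses H_\beta$, so $\alpha \in \lnk(H_\beta)$ and $\beta \in \lnk(H_\alpha)$; to rule out $\lnk(H_\alpha) \subsetneq \lnk(H_\beta)$ I would produce an arc $\delta$ disjoint from $\alpha$, not folding with $\alpha$, but either meeting $\beta$ or equal to $\beta$ or folding with $\beta$ — and the natural choice is $\delta = \beta$, except $\beta \notin \lnk(H_\beta)$. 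Hmm: so I actually want an arc in $\lnk(H_\alpha)\setminus\lnk(H_\beta)$. Since $\beta$ is a fixed arc, take $\delta$ to be an arc that crosses $\beta$ but is disjoint from $\alpha$ and does not fold with $\alpha$; such $\delta$ exists because $\{\alpha\}$ is not a triangulation (as $E(S)\ge 2$, guaranteed by $F(S)\ge 3$) so $S$ cut along $\alpha$ still supports an arc $\delta$ that can be made to hit $\beta$ — this uses that $\beta$ is essential in the complement of $\alpha$, hence some arc disjoint from $\alpha$ meets it. Then $\delta \in \lnk(H_\alpha)$ but $\delta \notin \lnk(H_\beta)$, breaking the inclusion.

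The main obstacle, and the step I expect to need the most care, is the existence claims in both directions — producing the witnessing arcs $\varepsilon$ (forward, properness) and $\delta$ (converse) and checking they are genuinely essential arcs rather than null-homotopic or boundary-parallel curves. This is exactly the kind of surgery argument already carried out in the proof of Lemma~\ref{lem:geodesic}, and the hypothesis $F(S) \geq 3$ is what makes these arcs exist; I would invoke that lemma's constructions (the detour figures) as a template and reduce to small local pictures near $x$ and near $\beta$.
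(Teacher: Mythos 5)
Your overall strategy is the same as the paper's (translate links through Proposition~\ref{prop:equivalent_crossing} and hunt for witness arcs), but the converse direction has a genuine gap at its only substantive point. In your Case A (when $\intersection(\alpha,\beta)>0$) you simply assert ``there is an arc disjoint from one but not the other''; what you actually need is an arc $\delta$ that is disjoint from $\alpha$, distinct from $\alpha$, does \emph{not} form a folded triangle with $\alpha$, and meets $\beta$ --- and producing such a $\delta$ is precisely the nontrivial content of this direction. It can fail among all obvious candidates: if, say, every arc of a triangulation $\calT\ni\alpha$ that meets $\beta$ is either $\alpha$ or folds with $\alpha$ (e.g.\ $\alpha,\beta$ the two diagonals of a square, Figure~\ref{fig:simple_beta}), then no arc of $\calT$ works, and the paper must construct the witness by flipping a side $\gamma$ of the unique square of $\calT-\{\alpha\}$ and then verify the flipped arc $\gamma'$ is disjoint from $\alpha$, meets $\beta$, and does not cut off a once-marked monogon. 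Neither your sketch nor the surgeries of Lemma~\ref{lem:geodesic} that you cite as a template produce this arc; note also that ``disjoint from one but not the other'' is not even the right condition, since without the non-folding requirement the candidate need not lie in $\lnk(H_\alpha)$ at all. In your Case B (disjoint, distinct, non-folded) you discarded the witness that works: $\beta$ itself lies in $\lnk(H_\alpha)\setminus\lnk(H_\beta)$, because a hyperplane is never in its own link --- this is exactly the paper's first case --- and you replaced it by an arc crossing $\beta$ whose existence you again do not establish.

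A secondary problem is in the forward direction: your justification that $\delta$ cannot form a folded triangle with $\beta$ is not a valid argument. Forming a folded triangle is a property of the pair of arcs alone; there is no principle that ``the interior of a triangle contains no arcs of a polygonalisation,'' and the sentence about completing a polygonalisation ``on one side but not the other'' does not verify the four nonempty intersections of Definition~\ref{def:hyp_cross}. The correct (short) argument is: since $\delta$ has no endpoint at $x$, a folded triangle with $\beta$ would force $\delta$ to be a loop at $a$ cutting off a once-marked monogon containing $x$ and disjoint from $\alpha$, hence isotopic to $\alpha$, contradicting $\delta\neq\alpha$. That step is easily repaired; the missing construction of the witness arc in the converse direction is the essential gap.
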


\begin{proof}
We use Proposition~\ref{prop:equivalent_crossing} repeatedly to determine whether one hyperplane lies in the link of another.

Suppose $\alpha$ and $\beta$ form a folded triangle.
If $H_\gamma \in \lnk(H_\alpha)$ then $\alpha$ and $\gamma$ are distinct, disjoint and do not form a folded triangle.
Thus $\gamma$ and $\beta$ must also be disjoint and distict.
Additionally, since $\alpha$ only forms a folded triangle with $\beta$ we have that $\gamma$ and $\beta$ do not form a folded triangle and so $H_\gamma \in \lnk(H_\beta)$.
On the other hand, let $\calT \ni \beta$ be a triangulation without folded triangles.
Then there is an arc $\gamma \in \calT$ such that $\intersection(\gamma, \beta) = 0$ but $\intersection(\gamma, \alpha) \neq 0$.
Thus $H_\gamma \in \lnk(H_\beta)$ but $H_\gamma \notin \lnk(H_\alpha)$ and so $\lnk(H_\alpha) \subsetneq \lnk(H_\beta)$.

Suppose $\alpha$ does not form a folded triangle with $\beta$.
Let $\calT \ni \alpha$ be a triangulation with one or zero folded triangles depending on whether $\alpha$ does or does not cut off a once-marked monogon respectively.
There are now three cases to consider:
\begin{enumerate}
\item If $\intersection(\beta, \calT) = 0$ then $\beta \in \calT$.
	Therefore, since $\beta$ does not form a folded triangle with $\alpha$, we have that $H_\beta \in \lnk(H_\alpha)$ but $H_\beta \notin \lnk(H_\beta)$.
\item If $\intersection(\beta, \calT) \neq 0$ and there is an arc $\gamma \in \calT$ such that $\gamma$ and $\alpha$ are distinct and do not form a folded triangle and $\intersection(\gamma, \beta) \neq 0$ then $H_\gamma \in \lnk(H_\alpha)$ but $H_\gamma \notin \lnk(H_\beta)$.
\item If $\intersection(\beta, \calT) \neq 0$ and any arc of $\calT$ that intersects $\beta$ is either $\alpha$ or forms a folded triangle with $\alpha$ then $\alpha$ and $\beta$ are as shown in Figure~\ref{fig:simple_beta}.
	Hence there is an arc $\gamma$ which appears as exactly one side of the unique square of $\calT - \{\alpha\}$.
	Flipping $\gamma$ in $\calT$ gives an arc $\gamma' \neq \alpha$.
	Since $\gamma'$ does not cut off a once-marked monogon, is disjoint from $\alpha$ and $\intersection(\gamma', \beta) \neq 0$ we have that $H_{\gamma'} \in \lnk(H_\alpha)$ but $H_{\gamma'} \notin \lnk(H_\beta)$.
\end{enumerate}
In any case, the inclusion $\lnk(H_\alpha) \subsetneq \lnk(H_\beta)$ does not hold.
\end{proof}

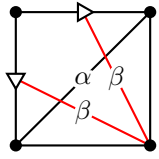
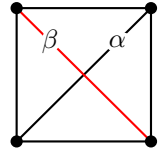
\begin{figure}[ht]
	\centering
	\begin{subfigure}[b]{0.5\textwidth}
		\centering
		\begin{tikzpicture}[scale=1.25,thick]

\foreach \i in {45,135,...,315} {\draw (\i:1) -- (\i+90:1);}
\draw (45:1) -- node {\contour*{white}{$\alpha$}} (225:1);
\draw [red] (0,0.707) -- node [pos=0.5, black] {\contour*{white}{$\beta$}} (315:1);
\draw [red] (-0.707,0) -- node [pos=0.5, black] {\contour*{white}{$\beta$}} (315:1);

\node [tri,rotate=30,scale=2] at (0,0.707) {};
\node [tri,rotate=-60,scale=2] at (-0.707,0) {};
\foreach \i in {45,135,...,315} {\node [dot] at (\i:1) {};}

\end{tikzpicture}
		\caption{When $\beta$ meets a folded triangle.}
	\end{subfigure}%
	~
	\begin{subfigure}[b]{0.5\textwidth}
		\centering
		\begin{tikzpicture}[scale=1.25,thick]

\foreach \i in {45,135,...,315} {\draw (\i:1) -- (\i+90:1);}
\draw (45:1) -- node [near start] {\contour*{white}{$\alpha$}} (225:1);
\draw [red] (135:1) -- node [near start, black] {\contour*{white}{$\beta$}} (315:1);
\foreach \i in {45,135,...,315} {\node [dot] at (\i:1) {};}

\end{tikzpicture}
		\caption{When $\beta$ only meets $\alpha$.}
	\end{subfigure}
	\caption{When $\beta$ only meets $\alpha$ and arcs that form a folded triangle with $\alpha$.}
	\label{fig:simple_beta}
\end{figure}

\begin{theorem}
\label{thrm:pol_isom}
The cube complexes $\pol(S)$ and $\pol(S')$ are isomorphic if and only if $S$ and $S'$ are homeomorphic.
\end{theorem}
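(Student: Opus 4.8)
The forward implication is immediate and I would dispatch it first: a homeomorphism $f \from S \to S'$ carries polygonalisations to polygonalisations and preserves the relation of differing by a single arc, so it induces a cube-complex isomorphism $\pol(S) \to \pol(S')$.

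For the converse the plan is to show that the cube complex $\pol(S)$ determines the arc graph $\calA(S)$, and then to invoke Theorem~\ref{thrm:arc_isom}. So let $\Phi \from \pol(S) \to \pol(S')$ be a cube-complex isomorphism. First I would observe that $\Phi$ sends squares to squares, hence preserves edge-parallelism, hence induces a bijection $\Hyp(\pol(S)) \to \Hyp(\pol(S'))$; composing with the $\arc$-bijections of Theorem~\ref{thrm:pol_sageev} yields a bijection $\phi \from \calA(S) \to \calA(S')$ on arcs.

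Next I would check that $\phi$ preserves crossing. The key point is that $H_\alpha \crosses H_\beta$ if and only if there is a square of $\pol(S)$ through which both hyperplanes pass: in one direction the polygonalisations $\calT$, $\calT - \{\alpha\}$, $\calT - \{\beta\}$, $\calT - \{\alpha,\beta\}$ supplied by Proposition~\ref{prop:equivalent_crossing}(3) span exactly such a square; in the other, the source $R$ and sink $R - \{\alpha,\beta\}$ of such a square, together with any triangulation $\calT \supseteq R$, satisfy condition~(3) by Remark~\ref{rem:polygon_containment}. Since ``there is a square through which both hyperplanes pass'' is intrinsic to the cube complex, $\Phi$ transports it, and $\phi$ restricts to an isomorphism of crossing graphs $\Cr(\pol(S)) \to \Cr(\pol(S'))$.

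Finally I would reconstruct the remaining edges of the arc graph. By Proposition~\ref{prop:equivalent_crossing}(2), two distinct arcs $\alpha, \beta$ are disjoint precisely when $H_\alpha \crosses H_\beta$ or $\{\alpha,\beta\}$ forms a folded triangle as in Figure~\ref{fig:folded}; and by Lemma~\ref{lem:characterise_folded} the latter happens exactly when $\lnk(H_\alpha) \subsetneq \lnk(H_\beta)$ or $\lnk(H_\beta) \subsetneq \lnk(H_\alpha)$ inside $\Cr(\pol(S))$. Thus the edge set of $\calA(S)$ is obtained from $\Cr(\pol(S))$ by adjoining the symmetrised strict link-containment pairs --- a combinatorial recipe that $\phi$ respects --- so $\phi$ is a graph isomorphism $\calA(S) \to \calA(S')$, and Theorem~\ref{thrm:arc_isom} gives $S \homeo S'$. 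I expect the only delicate point to be the intrinsic, square-based description of crossing (needed so that the crossing graph is transported by $\Phi$ rather than depending on the arc labelling); the rest is bookkeeping, with the genuine content already residing in Theorem~\ref{thrm:pol_sageev}, Proposition~\ref{prop:equivalent_crossing}, and Lemma~\ref{lem:characterise_folded}.
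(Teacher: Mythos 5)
Your proposal is correct and follows essentially the same route as the paper: recover the crossing graph from the cube complex, adjoin the missing (folded-triangle) edges via strict link containment using Proposition~\ref{prop:equivalent_crossing} and Lemma~\ref{lem:characterise_folded} to rebuild $\calA(S)$, and conclude via arc-graph rigidity (Theorem~\ref{thrm:arc_isom}). The only difference is that you spell out the square-based, intrinsic description of hyperplane crossing, which the paper leaves implicit in its opening assertion that a cube-complex isomorphism induces an isomorphism of crossing graphs; that verification is correct and a welcome, if minor, elaboration.
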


\begin{proof}
If $\pol(S)$ and $\pol(S')$ are isomorphic as cube complexes then $\Cr(\pol(S))$ and $\Cr(\pol(S'))$ are isomorphic graphs.
By Proposition~\ref{prop:equivalent_crossing} and Lemma~\ref{lem:characterise_folded}, adding the edges
\[ \{ \{H, H'\} \; : \; H,H' \in \Cr(\pol(S)), \; \lnk(H) \subsetneq \lnk(H') \} \]
to $\Cr(\pol(S))$ produces a graph isomorphic to $\calA(S)$.
Similarly, by adding the edges
\[ \{ \{H, H'\} \; : \; H,H' \in \Cr(\pol(S')), \; \lnk(H) \subsetneq \lnk(H') \} \]
to $\Cr(\pol(S'))$ we obtain a graph isomorphic to $\calA(S')$.
This rule for adding edges is purely combinatorial, that is, it depends only on the graph structure.
Since $\Cr(\pol(S))$ and $\Cr(\pol(S'))$ are isomorphic, we deduce $\calA(S) \isom \calA(S')$.
By rigidity of the arc graph (Theorem~\ref{thrm:arc_isom}), this in turn implies that $S$ is homeomorphic to $S'$.

The reverse direction is straightforward.
\end{proof}

In fact by examining the possible cases shown in Appendix~\ref{sec:exceptions}, we see that this theorem also holds when $F(S) = 2$.

\section{Failure of Gromov's link condition}
\label{sec:rigidity}

In general, $\pol(S)$ contains many vertices that fail Gromov's link condition \cite[Section~4.2.C]{Gromov}, which we rephrase as follows.
When $\partial S = \emptyset$ this is the only reason why $\pol(S)$ fails to be CAT(0), since in this case $\pol(S)$ is contractible.

\begin{definition}
\label{def:pos_curvature}
A \emph{positive curvature system} (based at $P \in \pol(S)$) is a set of $k \geq 3$ edges $e_1, \ldots, e_k$ in $\pol(S)$ incident to $P$ such that:
\begin{itemize}
\item every subset of $\{e_1, \ldots , e_{k} \}$ of size $k - 1$ is contained in an embedded $(k-1)$--cube, and
\item the set $\{e_1, \ldots, e_k \}$ is not contained in an embedded $k$--cube.
\end{itemize}
See Figure~\ref{fig:pos_curvature}.
\end{definition}

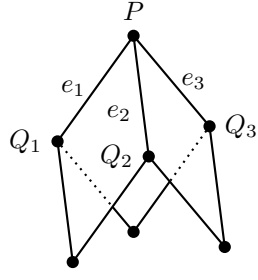
\begin{figure}[ht]
\centering
\begin{tikzpicture}[scale=2, thick]

\node [dot, label=above:{$P$}] (abc) at (0,0) {};
\node [dot, label=left:{$Q_2$}] (bc) at (0.1,-0.8) {};
\node [dot, label=left:{$Q_1$}] (ac) at (-0.5,-0.7) {};
\node [dot, label=right:{$Q_3$}] (ab) at (0.5,-0.6) {};
\node [dot] (a) at ($(ac)+(ab)$) {};
\node [dot] (b) at ($(bc)+(ab)$) {};
\node [dot] (c) at ($(bc)+(ac)$) {};

\draw (abc) -- node [below left] {$e_2$} (bc);
\draw (abc) -- node [left] {$e_1$} (ac);
\draw (abc) -- node [right] {$e_3$} (ab);
\draw (bc) -- (c) -- (ac);
\draw (ab) -- (b) -- (bc);

\coordinate (x) at (intersection of a--ac and c--bc);
\coordinate (y) at (intersection of a--ab and b--bc);
\draw [dotted] (ac) -- (x);
\draw (x) -- (a);
\draw [dotted] (ab) -- (y);
\draw (y) -- (a);

\end{tikzpicture}
\caption{A positive curvature system based at $P$.}
\label{fig:pos_curvature}
\end{figure}

Positive curvature systems are useful as they allow us to determine the direction of inclusion of the edges involved.

\begin{proposition}[Positive curvature criterion]
\label{prop:pos_curvature}
If $e_1, \ldots, e_k$ is a positive curvature system where $e_i = \{P, Q_i\}$ then $P \succ Q_i$.
\end{proposition}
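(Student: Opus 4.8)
The plan is to translate the positive curvature hypothesis into containments of polygonalisations via the characterisation of cubes (Lemma~\ref{lem:cube_char}), and then show that if one of the $e_i$ pointed \emph{away} from $P$ the whole family would fit inside an embedded $k$--cube, contradicting Definition~\ref{def:pos_curvature}. For notation, write $\alpha_i \defeq \arc(e_i)$; by Remark~\ref{rem:arc} each edge satisfies either $P \succ Q_i$ (so $\alpha_i \in P$ and $Q_i = P - \{\alpha_i\}$) or $Q_i \succ P$ (so $\alpha_i \notin P$ and $Q_i = P \cup \{\alpha_i\}$), and since distinct edges incident to $P$ have distinct arcs the $\alpha_i$ are pairwise distinct. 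Let $B \defeq \{ i : P \succ Q_i \}$ and $A \defeq \{ i : Q_i \succ P \}$; the claim is exactly that $A = \emptyset$.

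The key computation is as follows. For each $m$, the positive curvature system provides an embedded $(k-1)$--cube $C_m$ containing $\{ e_i : i \neq m \}$. Since these edges all meet $P$, the vertex $P$ lies in $C_m$; and as a $(k-1)$--cube has exactly $k-1$ edges at each vertex, the $e_i$ with $i \neq m$ are precisely the edges of $C_m$ at $P$. Writing $C_m = [C_m^-, C_m^+]$ as in Lemma~\ref{lem:cube_char}, so that $C_m^- \subseteq P \subseteq C_m^+$, the edge of $C_m$ at $P$ with arc $\gamma \in C_m^+ - C_m^-$ joins $P$ to $P - \{\gamma\}$ when $\gamma \in P$ and to $P \cup \{\gamma\}$ otherwise. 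Matching these against the $e_i$ gives
\[ C_m^+ = P \cup \{ \alpha_i : i \in A,\ i \neq m \}, \qquad C_m^- = P - \{ \alpha_i : i \in B,\ i \neq m \}; \]
in particular every $C_m^{\pm}$ is a polygonalisation.

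Finally I would assume, for contradiction, that $A \neq \emptyset$, and set $D^+ \defeq P \cup \{ \alpha_i : i \in A \}$ and $D^- \defeq P - \{ \alpha_i : i \in B \}$. That $D^-$ is a polygonalisation is immediate: $D^- = C_m^-$ for any $m \in A$. For $D^+$: if $B \neq \emptyset$ then $D^+ = C_m^+$ for any $m \in B$; and if $B = \emptyset$ then, using $k \geq 3$, any two of the $\alpha_i$ lie together in some $C_m^+$ and are therefore disjoint, so $\{ \alpha_i : i \in A \}$ is a multiarc, disjoint from $P$ (since each $Q_i = P \cup \{\alpha_i\}$ is a multiarc), and Remark~\ref{rem:polygon_containment} promotes the multiarc $D^+ \supseteq P$ to a polygonalisation. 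Then $D^- \subseteq P \subseteq D^+$ and $|D^+ - D^-| = |A| + |B| = k$, so by Lemma~\ref{lem:cube_char} the set $[D^-, D^+]$ is the vertex set of an embedded $k$--cube $D$. Since each $Q_i$ lies in $[D^-, D^+]$ and differs from $P$ by the single arc $\alpha_i$, every $e_i$ is an edge of $D$, so $\{ e_1, \ldots, e_k \}$ lies inside an embedded $k$--cube, contradicting Definition~\ref{def:pos_curvature}. Hence $A = \emptyset$, i.e.\ $P \succ Q_i$ for all $i$.

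The main obstacle I anticipate is the middle step: reading off $C_m^{\pm}$ correctly from the cube structure, and in particular handling the degenerate case $B = \emptyset$, where $D^+$ cannot be obtained from a single $(k-1)$--cube and one must instead combine $k \geq 3$ with the ``a multiarc containing a polygonalisation is a polygonalisation'' half of Remark~\ref{rem:polygon_containment}.
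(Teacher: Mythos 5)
Your proof is correct and follows essentially the same route as the paper: assume some edge points away from $P$ and derive a contradiction by exhibiting an embedded $k$--cube containing all the $e_i$, built via Lemma~\ref{lem:cube_char} and Remark~\ref{rem:polygon_containment} from disjointness extracted from the pairwise shared $(k-1)$--cubes. Indeed, your cube $[D^-, D^+]$ coincides with the paper's $[C^-, C^+ \cup \{\alpha_1\}]$ (taking $C = C_m$ for a ``bad'' index $m$); your version just does the bookkeeping of all $C_m^{\pm}$ explicitly and splits the case $B = \emptyset$ that the paper handles in one stroke.
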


\begin{proof}
Let $\alpha_i = \arc(e_i)$.
For a contradiction, suppose that $P \prec Q_1$.
Then $\alpha_1$ is disjoint from all arcs of $P$.
By assumption, $\{e_2 , \ldots , e_k\}$ lie in an embedded $(k-1)$--cube $C$ that contains $P$.
Recall that $C$ has a unique source $C^+$ and a unique sink $C^-$ with respect to inclusion and that $C^+ - C^- = \{\alpha_2, \ldots, \alpha_k\}$.
Observe that for all $i = 2, \ldots, k$, the edges $e_1$ and $e_i$ are contained in a common $(k-1)$--cube since $\{e_1, \ldots, e_k\}$ is a positive curvature system.
This means that $\alpha_1$ is disjoint from $\alpha_i$ for all $i$.
Note that $C^- \subseteq P \subseteq C^+$ and $C^+ - P \subseteq \{\alpha_2, \ldots, \alpha_k\}$.
We deduce that $\alpha_1$ is disjoint from $C^+$, and so $C^+ \cup \{\alpha_1\}$ is also a polygonalisation.
The set $[C^- , C^+ \cup \{\alpha_1\}]$ contains $P \cup \{\alpha_1\}$, and is therefore the vertex set of a $k$--cube that contains the edges $\{e_1, \ldots, e_k\}$.
This contradicts the assumption that $e_1, \ldots, e_k$ is a positive curvature system.
\end{proof}

\begin{proposition}
\label{lem:characterisation}
Let $e_1, \ldots, e_k$ be edges incident to $P$ with $k \geq 3$.
Let $\alpha_i = \arc(e_i)$.
Then $\{e_1, \ldots, e_k\}$ is a positive curvature system based at $P \in \pol(S)$ if and only if each $\alpha_i \in P$ and there is a (possibly peripheral) simple closed curve $c$ on $S$ such that:
\begin{itemize}
\item $\intersection(c, \alpha_i)= 1$ for every $i$;
\item $\intersection(c, \beta)=0$ for every $\beta \in P - \{ \alpha_1, \ldots, \alpha_k \}$;
\item $c$ meets each polygon of $P$ at most once.
\end{itemize}
\end{proposition}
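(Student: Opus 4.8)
The plan is to prove both directions of the equivalence by translating the cube-combinatorial conditions of Definition~\ref{def:pos_curvature} into statements about arcs and curves, using the characterisation of cubes (Lemma~\ref{lem:cube_char}) as the main bridge. Throughout, I will work with the polygonalisation $P$ and the arcs $\alpha_i = \arc(e_i)$.

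\emph{The forward direction.} Suppose $\{e_1, \ldots, e_k\}$ is a positive curvature system based at $P$. By the positive curvature criterion (Proposition~\ref{prop:pos_curvature}), each $e_i = \{P, Q_i\}$ with $P \succ Q_i$, so $\alpha_i \in P$ and $Q_i = P - \{\alpha_i\}$; in particular each $\alpha_i$ is removable from $P$, meaning its interior meets two distinct polygons of $P$. The first defining property says every $(k-1)$-subset of the $e_i$ lies in an embedded $(k-1)$-cube containing $P$; since $P$ is the source of such a cube (the $\alpha_j$ being removable), Lemma~\ref{lem:cube_char} tells us that for every $(k-1)$-subset $\{\alpha_{j_1}, \ldots, \alpha_{j_{k-1}}\}$, the set $P - \{\alpha_{j_1}, \ldots, \alpha_{j_{k-1}}\}$ is a polygonalisation. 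On the other hand, the second defining property combined with Lemma~\ref{lem:cube_char} says $P - \{\alpha_1, \ldots, \alpha_k\}$ is \emph{not} a polygonalisation. To extract the curve $c$: removing all $k$ arcs from $P$ fails to produce a polygonalisation precisely because some complementary region acquires an interior marked point or becomes something other than a polygon; the obstruction is ``local'' in the sense that removing any $k-1$ of them is fine. I claim the region of $P - \{\alpha_1, \ldots, \alpha_k\}$ witnessing the failure is obtained by merging polygons of $P$ along exactly the arcs $\alpha_1, \ldots, \alpha_k$, and that these arcs, together with the merged region, must form an annular configuration: the dual graph whose vertices are the polygons incident to the $\alpha_i$ and whose edges are the $\alpha_i$ must be a single cycle (if it were a tree, removing the $\alpha_i$ would still give a disk; if it had a more complicated cycle structure, removing a proper subset would already fail). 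Take $c$ to be the core curve of the annular neighbourhood of this cycle, pushed to cross each $\alpha_i$ exactly once and to meet each polygon in at most one arc. The three bulleted conditions then follow: $\intersection(c, \alpha_i) = 1$ by construction; $\intersection(c, \beta) = 0$ for $\beta \in P$ not among the $\alpha_i$ since $c$ lies in the union of polygons incident to the $\alpha_i$ and only crosses the $\alpha_i$; and $c$ meets each polygon of $P$ at most once by the ``single cycle'' structure.

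\emph{The reverse direction.} Conversely, suppose each $\alpha_i \in P$ and such a curve $c$ exists. Since $\intersection(c, \alpha_i) = 1$, the curve $c$ passes through two (a priori possibly equal) sides of each polygon incident to $\alpha_i$; because $c$ meets each polygon at most once, the polygons crossed by $c$ are cyclically arranged, glued along the arcs $\alpha_1, \ldots, \alpha_k$, forming an annular region $A$ whose core is isotopic to $c$. For any proper subset $I \subsetneq \{1, \ldots, k\}$, removing $\{\alpha_i : i \in I\}$ from $P$ merges some of these polygons but, since we have not cut the cycle completely, the resulting region is still a disk with no interior marked point and at least three sides — i.e., a polygon — and hence $P - \{\alpha_i : i \in I\}$ is a polygonalisation (the regions not crossed by $c$ are unaffected, and one must check the side-count stays $\geq 3$, which follows because $c$ crosses at least $k \geq 3$ arcs so the merged boundary is long enough). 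By Lemma~\ref{lem:cube_char}, every $(k-1)$-subset of $\{e_1, \ldots, e_k\}$ lies in an embedded $(k-1)$-cube containing $P$. Removing all $k$ arcs, however, cuts the cycle entirely: the annulus $A$ minus all the $\alpha_i$ becomes an honest annulus (two boundary circles) rather than a polygon, so $P - \{\alpha_1, \ldots, \alpha_k\}$ is not a polygonalisation, and again by Lemma~\ref{lem:cube_char} the set $\{e_1, \ldots, e_k\}$ is not contained in an embedded $k$-cube. Hence $\{e_1, \ldots, e_k\}$ is a positive curvature system.

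\emph{Main obstacle.} The delicate point is the topological bookkeeping in the forward direction: showing that the failure of $P - \{\alpha_1, \ldots, \alpha_k\}$ to be a polygonalisation, combined with the fact that every proper sub-removal succeeds, forces the dual configuration of the $\alpha_i$ to be exactly one cycle, and hence yields a bona fide simple closed curve $c$ with all three properties (in particular that $c$ can be drawn meeting each polygon at most once). This requires ruling out ``folded'' or self-adjacent situations — where an arc $\alpha_i$ borders the same polygon on both sides, or where the merged region is non-planar — by a careful case analysis of how polygons can be glued, analogous to the pentagon/square analysis in the proof of Proposition~\ref{prop:posboundary_connected}. The reverse direction is comparatively routine once the annular picture is set up, the only subtlety being the side-count verification ensuring merged regions remain polygons rather than degenerating to bigons or monogons.
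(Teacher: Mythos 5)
Your overall scheme (translating both conditions of Definition~\ref{def:pos_curvature} through Lemma~\ref{lem:cube_char}, then producing $c$ in one direction and testing removability in the other) is sound, and your converse is close to the paper's; but the forward direction as written has a genuine gap, precisely at the step you defer to the ``main obstacle''. Your dichotomy --- ``if the dual graph were a tree, removing the $\alpha_i$ would still give a disk; otherwise it is a single cycle and the merged region is annular'' --- conflates ``disk'' with ``polygon''. A complementary region can be a disk and still fail to be a polygon because a marked point has become interior to it, and this is not a pathology to be ruled out but a central family of examples: take an interior marked point $v$ whose incident arcs in $P$ are exactly $\alpha_1, \ldots, \alpha_k$ (with the $k$ surrounding polygons distinct). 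This \emph{is} a positive curvature system, the curve $c$ is a small circle about $v$, and the merged region after removing all $k$ arcs is a once-marked $k$-gon, not an annulus --- so your claim that ``these arcs, together with the merged region, must form an annular configuration'' is false there, and your justification that a forest forces a polygonalisation is incomplete (one must also argue that no marked point loses all its incident arcs, which is where the cycle actually comes from in this case). Your construction of $c$ as the core of a regular neighbourhood of an embedded dual cycle would still produce the right curve in this case, but the argument that such a cycle exists, visits distinct polygons, and uses all $k$ arcs is exactly the bookkeeping you acknowledge not having done. The same omission recurs in your converse: ``the resulting region is still a disk with no interior marked point'' is asserted, not proved (and your side-count worry is the easy part; merging polygons with at least three sides along a tree never produces a bigon).

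For comparison, the paper sidesteps the dual-graph classification entirely. Forward direction: the cube conditions give that $Q \defeq P - \{\alpha_2, \ldots, \alpha_k\}$ is a polygonalisation while $\alpha_1$ is not removable from $Q$, so both sides of $\alpha_1$ meet a single polygon $\calR$ of $Q$; one draws $c$ crossing $\alpha_1$ once and otherwise running through $\calR$, chosen to meet each $\alpha_i$ and each polygon of $P$ at most once, and then proves each $\alpha_i$ is met \emph{exactly} once by a parity argument: if some $\alpha_j$ were missed, $c$ would lie inside a polygon of the polygonalisation $P$ minus the other $k-1$ arcs and hence be null-homotopic, contradicting $\intersection(c, \alpha_1) = 1$. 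Converse: using the distinctness of the polygons $\calR_1, \ldots, \calR_k$ met by $c$, one removes $\alpha_2, \ldots, \alpha_k$ one at a time (each arc's two sides lie in distinct current regions, so each removal stays a polygonalisation --- this is also what quietly kills the interior-marked-point worry), while $\alpha_1$ then meets only one region, so the $k$-cube cannot exist. If you want to keep your dual-graph picture, you must (i) show that a failure of $P - \{\alpha_1, \ldots, \alpha_k\}$ forces a graph cycle even in the marked-point case, (ii) use the $(k-1)$-cube hypothesis to show every cycle uses all $k$ edges, and (iii) replace ``annular merged region'' by ``embedded dual cycle with regular neighbourhood an annulus''; as submitted, the argument is not complete.
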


\begin{proof}
By assumption $Q \defeq P - \{\alpha_2, \ldots, \alpha_k\}$ is a polygonalisation, but $P - \{\alpha_1, \ldots, \alpha_k\}$ is not.
Since $\alpha_1$ is not removable from $Q$, its interior meets a single polygon $\calR$ of $Q$.
Let $c$ be a simple arc in $\calR$ connecting one side of $\alpha_1$ to the other.
By construction $c$ is a simple closed curve in $S$ disjoint from each arc in $P - \{\alpha_1, \ldots, \alpha_k\}$.
Furthermore $c$ intersects each $\alpha_i$ at most once.
We claim that $c$ must intersect every $\alpha_i$ exactly once.
For a contradiction, without loss of generality, suppose $\alpha_k$ is disjoint from $c$. Then by assumption $P - \{\alpha_1, \ldots, \alpha_{k-1}\}$ is a polygonalisation.
So $c$ lies in some polygon and so is null-homotopic, a contradiction.

Conversely, suppose $c$ is a curve satisfying the given conditions.
Orient $c$ and assume, without loss of generality, that the arcs it meets appear in the order $\alpha_1, \ldots, \alpha_k$ (up to cyclic permutation).
Let $\calR_i$ be the polygon of $P$ which contains the segment of $c$ appearing between $\alpha_{i-1}$ and $\alpha_{i}$.
The $\calR_i$'s are well defined since $c$ intersects each $\alpha_i$ exactly once, and are distinct since $c$ meets each polygon of $P$ at most once.
Note that $\alpha_2$ is removable from $P$ since its interior meets distinct regions $\calR_1$ and $\calR_2$.
Proceeding inductively, we can remove $\alpha_i$ from $P - \{ \alpha_2, \ldots, \alpha_{i-1} \}$ for $2 < i \leq k$.
Each removal decreases the number of polygons meeting $c$ by one.
Therefore, the interior of $\alpha_1$ meets only one polygon of $P - \{\alpha_2, \ldots, \alpha_k\}$, and so $P - \{ \alpha_1, \ldots, \alpha_k \}$ is not a polygonalisation.
Hence $\{ e_1, \ldots, e_k \}$ is a positive curvature system.
\end{proof}

Propositions~\ref{prop:pos_curvature} and \ref{lem:characterisation} show that $P \in \pol(S)$ fails Gromov's link condition if and only if its dual fat graph contains an embedded cycle of length at least three.
Hence, positive curvature systems are abundant throughout $\pol(S)$.
In fact, together with the square lemma and one additional piece of information, they can be used to determine the direction of inclusion of all edges.

\begin{proposition}
\label{prop:characterise_containment}
There is a combinatorial criterion such that for each edge $e = \{P, Q\}$ in $\pol(S)$ we have that $P \succ Q$ if and only if the criterion is satisfied by the $(E(S)+2)$--neighbourhood of $e$.
\end{proposition}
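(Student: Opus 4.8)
Deciding, for every edge $e=\{P,Q\}$, whether $P\succ Q$ is the same as \emph{orienting} every hyperplane $H_\alpha$ of $\pol(S)$, i.e.\ choosing which component of $\pol(S)$ cut along $H_\alpha$ (Theorem~\ref{thrm:pol_sageev}) is $\pol_\alpha(S)$. By the Square Lemma (Lemma~\ref{lem:square_lemma}) any two parallel edges carry the same arc and point the same way across their common hyperplane, so this orientation is \emph{square-consistent} and, conversely, any square-consistent assignment of directions to a set of edges meeting every hyperplane extends uniquely to all of $\pol(S)$. It therefore suffices to pin down the orientation of $H_{\arc(e)}$ using only combinatorial data read from the $(E(S)+2)$--neighbourhood of $e$; the criterion will assert precisely that this neighbourhood contains a configuration forcing $P$, rather than $Q$, to be the larger endpoint.

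The principal tool is the positive curvature criterion. A positive curvature system $e_1,\dots,e_k$ ($k\geq3$) based at a vertex $R$ is purely combinatorial, is witnessed within the $(k-1)$--neighbourhood of $R$, and has $k\leq E(S)$ (the $\arc(e_i)$ being distinct arcs of $R$, by Proposition~\ref{prop:pos_curvature}); moreover Proposition~\ref{prop:pos_curvature} says $R$ is the larger endpoint of each $e_i$, orienting each $H_{\arc(e_i)}$. By Proposition~\ref{lem:characterisation} such systems correspond to simple closed curves meeting each polygon of $R$ at most once, so they exist exactly when the dual fat graph of $R$ carries an embedded cycle of length $\geq3$. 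When $S$ is not a polygon, the dual fat graph of every polygonalisation has first Betti number $2g+b+s-1\geq1$; I would show that from $e$ one can reach, within distance $E(S)-1$, a triangulation $\calT$ containing the larger endpoint $P^{+}$ of $e$, suitably flipped so that $\arc(e)$ is removable and its dual edge lies on a short cycle, yielding a positive curvature system that orients $H_{\arc(e)}$ and whose chain of squares back to $e$ stays inside the $(E(S)+2)$--neighbourhood, the extra $+2$ absorbing the detours. (Consistency check: any triangulation $\calT\supseteq P^{+}$ satisfies $d(\calT,P^{+})=E(S)-|P^{+}|<d(\calT,P^{-})$, so a closest triangulation to $e$ contains $P^{+}$, and the criterion ``$P\succ Q$'' may equivalently be read as ``$P$ is nearer than $Q$ to a closest triangulation'', once triangulations are combinatorially recognisable.)

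If $S$ \emph{is} a polygon, every dual fat graph is a tree, $\pol(S)$ contains no positive curvature system, and the missing anchor --- the ``one additional piece of information'' --- is the empty multiarc $\emptyset$, which is now a polygonalisation. A short convexity estimate shows $\emptyset$ is the unique vertex of $\pol(S)$ of maximal degree $\tfrac12 p(p-3)$ (using $F(S)\geq3$); since $\emptyset$ lies at distance $\min(|P|,|Q|)\leq E(S)-1$ from $e$ and all its neighbours at distance $\leq E(S)$, it is uniquely recognisable from the $(E(S)+2)$--neighbourhood of $e$. As every arc of a polygon is removable from any polygonalisation containing it, $d(\emptyset,R)=|R|$ for all $R$, and every geodesic from $\emptyset$ to an endpoint of $e$ stays within distance $E(S)$ of that endpoint; hence the criterion ``$P\succ Q$ iff $d(\emptyset,P)>d(\emptyset,Q)$'' is decided inside that neighbourhood. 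Combining the polygon and non-polygon cases produces the required combinatorial criterion.

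The hard part is the non-polygon step: guaranteeing that a positive curvature system orienting $H_{\arc(e)}$ can be produced within a controlled distance of $e$ --- equivalently, that after boundedly many local modifications the dual fat graph acquires a short cycle through $\arc(e)$ --- and then the bookkeeping that the square-propagation from such an anchor back to $e$ never leaves the $(E(S)+2)$--neighbourhood. Both the bound on detour length and the fact that the anchors above sit within distance $E(S)-1$ of $e$ ultimately rest on the cardinality bound $|R|\leq E(S)$ for polygonalisations, and it is this interplay that fixes the radius at $E(S)+2$.
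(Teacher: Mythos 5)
Your plan has a genuine gap in the non-polygon branch, and it is not merely the admitted bookkeeping: the key existence claim is false for separating arcs. You propose to orient $H_{\arc(e)}$ by finding, at bounded distance from $e$, a positive curvature system containing an edge dual to $H_{\arc(e)}$ and then propagating back through squares. But by Proposition~\ref{lem:characterisation}, any positive curvature system comes with a simple closed curve $c$ satisfying $\intersection(c,\alpha_i)=1$ for each of its arcs; if $\alpha=\arc(e)$ is a separating arc (equivalently, its dual edge is a bridge in every dual fat graph), every simple closed curve crosses $\alpha$ an even number of times, so \emph{no} positive curvature system anywhere in $\pol(S)$ contains an edge dual to $H_\alpha$, no matter how one flips or which nearby polygonalisation one moves to. A positive first Betti number of the dual graph (your ``$S$ is not a polygon'' hypothesis) does not give an embedded cycle of length at least three \emph{through the dual edge of $\alpha$}, which is what your anchor requires. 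Since your only fallback anchor (the empty polygonalisation of maximal degree) exists only when $S$ is a polygon, edges carrying separating arcs on non-polygon surfaces are left with no criterion at all. The paper closes exactly this case differently: when the two polygons $\calR_1,\calR_2$ adjacent to $\alpha$ in the larger endpoint meet only along $\alpha$ (which includes the separating case), it shows directly that the larger endpoint has strictly smaller degree, $\deg(P)<\deg(Q)$, and uses that as the criterion; when $\calR_1$ and $\calR_2$ share a second arc, it builds the positive curvature system not at a distant triangulation but at a vertex obtained from $P$ by adding or removing a single arc $\gamma$, so only one or two applications of the square lemma (Lemma~\ref{lem:square_lemma}) are needed and the radius $E(S)+2$ comes out immediately, with no long square-propagation to control.

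Two smaller points. First, even where your anchor does exist, the ``hard part'' you defer (producing the system within distance $E(S)-1$ and keeping the square chain inside the $(E(S)+2)$--neighbourhood) is the actual content of the proposition and is not sketched in a checkable way; the paper avoids it entirely by working adjacent to $e$. Second, your consistency check ``$P\succ Q$ iff $P$ is nearer to a closest triangulation, once triangulations are combinatorially recognisable'' is circular as stated: the combinatorial recognition of triangulations (Corollary~\ref{cor:deficiency_combinatorics}) is deduced \emph{from} this proposition, so it cannot be used as an ingredient here.
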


\begin{proof}
Suppose that $P \succ Q$ and let $\alpha \defeq \arc(e)$.
We may assume that $e$ is not part of a positive curvature system as otherwise we recover that $P \succ Q$ automatically by Proposition~\ref{prop:pos_curvature}.

Since $\alpha$ is removable from $P$ it meets two distinct polygons $\calR_1$ and $\calR_2$ of $P$.
We consider the different possibilities for how these polygons meet:
\begin{itemize}
	\item Suppose that $\calR_1$ and $\calR_2$ share an additional arc $\beta \in P$.
		Let $c$ be a simple closed curve contained in $\calR_1 \cup \calR_2$ that meets only $\alpha$ and $\beta$.
	\begin{itemize}
		\item Suppose that $\calR_1$ (respectively $\calR_2$) has at least four sides.
			Let $\gamma$ be a diagonal of $\calR_1$ (resp. $\calR_2$) that meets $c$.
			Then the edge of $\pol(S)$ from $P \cup \{\gamma\}$ to $Q \cup \{\gamma\}$ is part of a positive curvature system.
			Therefore using Proposition~\ref{prop:pos_curvature} we have that $P \cup \{\gamma\} \succ Q \cup \{\gamma\}$ and so $P \succ Q$ by Lemma~\ref{lem:square_lemma}.
		\item Otherwise, both $\calR_1$ and $\calR_2$ are triangles.
			Therefore there is a side $\gamma$ ($\neq \alpha, \beta$) of $\calR_1$ or $\calR_2$ that is removable and doing so creates a polygon with at least four sides.
			By the preceding argument, $P - \{\gamma\} \succ Q - \{\gamma\}$ can be deduced from the positive curvature criterion and the square lemma.
			Hence, $P \succ Q$ can too.
	\end{itemize}
	\item Otherwise, $\calR_1$ and $\calR_2$ meet only along $\alpha$.
		In this case, any arc that can be added (respectively removed) from $P$ can also be added (resp.\ removed) from $Q$.
		However there are also arcs that are disjoint from $Q$ but intersect $\alpha$.
		Hence $\deg(P) < \deg(Q)$.
\end{itemize}
Thus we can deduce that $P \succ Q$ either from: the positive curvature criterion applied to a parallel cube at most distance two away, or (if the positive curvature criterion is not definitive) the degrees of $P$ and $Q$ in $\pol(S)$.

Furthermore, any cube in $\pol(S)$ has diameter at most $E(S)$.
Hence $P \succ Q$ can be determined from the combinatorics of the $(E(S) + 2)$--neighbourhood of $e$.
\end{proof}

\begin{corollary}
\label{cor:deficiency_combinatorics}
For each $k$, there is a combinatorial criterion that characterises the vertices of $\pol(S)$ corresponding to the polygonalisations with $k$ arcs.
\end{corollary}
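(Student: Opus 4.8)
The plan is to bootstrap from Proposition~\ref{prop:characterise_containment}. That result lets us orient each edge $e = \{P, Q\}$ of $\pol(S)$, pointing it from the smaller polygonalisation to the larger, using only the $(E(S)+2)$--neighbourhood of $e$; carrying this out at every edge turns the $1$--skeleton of $\pol(S)$ into a directed graph $\Gamma$, and this construction is purely combinatorial. I would then recover $|P|$ from the lengths of the ascending directed paths in $\Gamma$ issuing from $P$.

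First I would identify the sinks of $\Gamma$ --- the vertices with no outgoing edge, i.e.\ with no addable arc --- as exactly the triangulations: if a polygonalisation $P$ is not a triangulation then it is not a maximal multiarc, so some arc $\alpha$ is disjoint from $P$, and then $P \cup \{\alpha\}$ is a multiarc containing the polygonalisation $P$, hence a polygonalisation by Remark~\ref{rem:polygon_containment}, so $\alpha$ is addable and $P$ is not a sink. Each triangulation has exactly $E(S)$ arcs. Next, since every edge of $\pol(S)$ joins two polygonalisations differing by a single arc, any maximal ascending path in $\Gamma$ starting at $P$ has the form $P = P_0 \subsetneq P_1 \subsetneq \cdots \subsetneq P_\ell$ with $|P_{i+1}| = |P_i| + 1$ and with $P_\ell$ a sink; by the previous point $P_\ell$ is a triangulation, so $|P_\ell| = E(S)$. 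As every polygonalisation extends to a triangulation, at least one such path exists, and every one of them has length $\ell = |P_\ell| - |P_0| = E(S) - |P|$. Thus all maximal ascending paths from $P$ have the same length, namely the deficiency $E(S) - |P|$ of $P$.

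The criterion is then immediate: for fixed $k$, a vertex $P$ of $\pol(S)$ corresponds to a polygonalisation with exactly $k$ arcs if and only if the maximal ascending directed paths of $\Gamma$ starting at $P$ have length $E(S) - k$. Since $E(S)$ depends only on $S$, and since verifying this involves only the orientations of edges lying within a bounded neighbourhood of $P$ --- of radius at most $2E(S) + 2$, as such a path has at most $E(S)$ edges and each of them is oriented using its $(E(S)+2)$--neighbourhood --- this is a combinatorial criterion in the sense required, proving Corollary~\ref{cor:deficiency_combinatorics}.

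The one place to take care is the temptation to use a simpler invariant: neither the number of arcs removable from $P$ nor the number addable to $P$ equals the deficiency in general, since a maximal family of removable (or addable) arcs need not have size $E(S) - |P|$. Working instead with the length of a maximal ascending chain circumvents this, and the only structural facts it needs --- that maximal polygonalisations are triangulations, and that ascending edges add one arc at a time --- are built into $\pol(S)$ or follow at once from Remark~\ref{rem:polygon_containment}; so the argument is short once Proposition~\ref{prop:characterise_containment} is in hand.
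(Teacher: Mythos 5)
Your argument is correct and is essentially the paper's: both reduce the corollary to Proposition~\ref{prop:characterise_containment} by expressing $\prec$ combinatorially and then detecting $|P|$ via the length of an ascending chain from $P$ terminating at a maximal vertex (a triangulation), which necessarily has length $E(S)-k$. The extra verifications you include (sinks are exactly triangulations, all maximal ascending chains have the same length, the explicit neighbourhood radius) are correct but just make explicit what the paper's one-line formula leaves implicit.
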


\begin{proof}
This follows from the fact that a vertex $P$ corresponds to a polygonalisation with $k$ arcs if and only if
\[ \exists P_1 \prec \ldots \prec P_{E(S) - k} \; (P \prec P_1 \wedge \nexists \; Q \; (P_{E(S) - k} \prec Q)). \]
Since $\prec$ can be expressed as a combinatorial criterion by Proposition~\ref{prop:characterise_containment}, this statement is a combinatorial criterion also.
\end{proof}

We will now use these criteria to prove the rigidity of $\pol(S)$.
\begin{lemma}
\label{lem:pol_restr}
Every automorphism of $\pol(S)$ induces an automorphism of $\flip(S)$ by restriction of the vertices that correspond to triangulations. 
\end{lemma}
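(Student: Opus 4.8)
The plan is to identify triangulations intrinsically inside $\pol(S)$ using the combinatorial machinery just developed, and then show that an automorphism must send the flip subcomplex to itself and act on it by flips. By Corollary~\ref{cor:deficiency_combinatorics} (applied with $k = E(S)$), there is a combinatorial criterion characterising exactly those vertices of $\pol(S)$ that correspond to maximal multiarcs, i.e.\ triangulations. Since any automorphism $\Phi$ of $\pol(S)$ preserves all combinatorial criteria — it is an isomorphism of cube complexes, hence preserves adjacency, cubes, the relation $\prec$ (by Proposition~\ref{prop:characterise_containment}), and the neighbourhood sizes involved — it permutes the set of triangulation-vertices. So $\Phi$ restricts to a bijection $\varphi$ on the vertex set of $\flip(S) \subseteq \pol(S)$.

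Next I would check that $\varphi$ is a graph automorphism of $\flip(S)$, i.e.\ that it preserves the flip relation. Two triangulations $\calT, \calT'$ are related by a flip precisely when $|\calT \Delta \calT'| = 2$ and $\calT \cap \calT'$ is a polygonalisation with $E(S)-1$ arcs; equivalently, $\calT$ and $\calT'$ are the two ``opposite corners'' of a square $[\calT \cap \calT', \calT \cup \calT']$ in $\pol(S)$ whose four vertices are, in order, $\calT$, $\calT\cap\calT'$, $\calT'$, $\calT\cup\calT'$. All of this data — being a square, which vertices are triangulations, the cardinality relations via $\prec$ — is preserved by $\Phi$. Conversely, whenever $\varphi(\calT)$ and $\varphi(\calT')$ are triangulations differing by a flip, pulling back through $\Phi^{-1}$ shows $\calT$ and $\calT'$ differ by a flip. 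Hence $\varphi \in \Aut(\flip(S))$.

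I expect the main obstacle to be the verification that $\Phi$ genuinely preserves the combinatorial criteria in question — in particular that the criterion of Corollary~\ref{cor:deficiency_combinatorics} and the relation $\succ$ from Proposition~\ref{prop:characterise_containment} are invariant under an arbitrary cube-complex automorphism. The subtlety is that Proposition~\ref{prop:characterise_containment} characterises the \emph{direction} of an edge $\{P,Q\}$, not just the pair; an automorphism need not respect a chosen orientation a priori. But the criterion there is stated purely in terms of the combinatorics of a bounded neighbourhood of the edge (positive curvature systems, the square lemma, and vertex degrees), all of which are isomorphism-invariant. So $\Phi$ carries an edge satisfying ``$P \succ Q$'' to an edge satisfying the same criterion, meaning $\Phi$ respects $\prec$ up to the intrinsic characterisation — and that is exactly what is needed. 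Once this invariance is in hand, the identification of triangulation-vertices and the preservation of the flip relation follow formally, completing the proof that $\Phi|_{\flip(S)}$ is a well-defined automorphism of $\flip(S)$.
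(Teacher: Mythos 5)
Your overall route is the same as the paper's: use Corollary~\ref{cor:deficiency_combinatorics} (with $k = E(S)$, and implicitly Proposition~\ref{prop:characterise_containment}) to characterise the triangulation vertices intrinsically, so that any automorphism permutes them, and then recover the flip relation from the local combinatorics around pairs of triangulations. The paper packages the second step by observing that the subgraph induced by vertices with at least $E(S)-1$ arcs is exactly the barycentric subdivision of $\flip(S)$, which the automorphism preserves.

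However, one step of your argument as written is false. If $\calT$ and $\calT'$ differ by a flip, then the two arcs in $\calT \Delta \calT'$ are the two diagonals of a quadrilateral and hence \emph{intersect}, so $\calT \cup \calT'$ is not a multiarc and is not a vertex of $\pol(S)$; there is no square $[\calT \cap \calT', \calT \cup \calT']$. Indeed, by the characterisation of cubes (Lemma~\ref{lem:cube_char}) two distinct triangulations can never lie in a common square at all, since the source of such a square would be a polygonalisation strictly containing a maximal multiarc. Since you lean on ``being a square \dots is preserved by $\Phi$'' as the mechanism for transporting the flip relation, that mechanism fails. The repair is already contained in the first half of your own sentence: $\calT$ and $\calT'$ differ by a flip if and only if they have a common neighbour $Q$ in $\pol(S)$ with $Q \prec \calT$ and $Q \prec \calT'$ (equivalently, $Q$ is a vertex with $E(S)-1$ arcs), i.e.\ they are the endpoints of a length-two path through $\calT \cap \calT'$ --- this is the barycentric-subdivision picture. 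That relation is detected by Corollary~\ref{cor:deficiency_combinatorics} and Proposition~\ref{prop:characterise_containment}, hence preserved by $\Phi$, and the rest of your argument (including the inverse direction via $\Phi^{-1}$) then goes through.
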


\begin{proof}
Suppose that $\phi \in \Aut(\pol(S))$ is an automorphism. 
By Corollary \ref{cor:deficiency_combinatorics}, 
the map $\phi$ restricts to an automorphism of the graph  
\[ \mathcal{F}' \defeq \{P \in \pol(S) : |P| \geq E(S) - 1\} \]
induced by the set of triangulations and triangulations with one arc missing (recall that $|P|$ equals the number of arcs in $P$). 
Moreover $\phi$ maps triangulations to triangulations. 
Since $\mathcal{F}'$ is the barycentric subdivision of $\flip(S)$, it follows that $\phi$ induces an automorphism $\phi' \in \Aut(\flip(S))$. 
\end{proof}

\begin{theorem}
\label{thrm:pol_aut}
The natural homomorphism
\[\rho: \EMod(S) \to \Aut(\pol(S)) \]
is an isomorphism.
\end{theorem}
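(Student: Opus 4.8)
The plan is to reduce everything to flip--graph rigidity (Theorem~\ref{thrm:flip_rig}) by means of the restriction map supplied by Lemma~\ref{lem:pol_restr}. Since every automorphism of $\pol(S)$ sends triangulations to triangulations, restriction gives a group homomorphism $r\colon \Aut(\pol(S))\to\Aut(\flip(S))$, and by construction $r\circ\rho=\rho_{\flip}$, where $\rho_{\flip}\colon\EMod(S)\to\Aut(\flip(S))$ is the natural homomorphism. As $\rho_{\flip}$ is an isomorphism, it is injective, and hence so is $\rho$. It therefore remains to prove surjectivity.

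Given $\phi\in\Aut(\pol(S))$, let $\phi'\defeq r(\phi)$ and choose $g\in\EMod(S)$ with $\rho_{\flip}(g)=\phi'$ (possible since $\rho_{\flip}$ is onto). Set $\psi\defeq\rho(g)^{-1}\circ\phi\in\Aut(\pol(S))$; then $r(\psi)=\mathrm{id}$, i.e.\ $\psi$ fixes every triangulation of $S$ as a vertex of $\pol(S)$. The goal is to show that this forces $\psi=\mathrm{id}$, which will give $\phi=\rho(g)$ and complete the proof.

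The main tool is that a polygonalisation is reconstructible from the triangulations above it:
\[ P \;=\; \bigcap\bigl\{\calT \;:\; \calT\supseteq P,\ \calT\text{ a triangulation}\bigr\} \qquad\text{for every } P\in\pol(S). \]
To see this, suppose an arc $\alpha\notin P$ lay in every triangulation containing $P$; then $\alpha$ is disjoint from $P$ and, being essential, is a diagonal of a complementary polygon $R$ of $P$, which therefore has at least four sides. But $R$ then admits a triangulation avoiding $\alpha$, and combining it with triangulations of the other complementary polygons of $P$ yields a triangulation of $S$ containing $P$ but not $\alpha$ --- a contradiction. Second, $\psi$ preserves containment of polygonalisations: by Proposition~\ref{prop:characterise_containment} the relation $P\succ Q$ on an edge $\{P,Q\}$ is detected by a combinatorial criterion on a bounded neighbourhood, so $\psi$ preserves $\succ$ and $\prec$; and since $P\subseteq Q$ can be witnessed by a chain $P\prec P\cup\{\alpha_1\}\prec\cdots\prec Q$ of polygonalisations (valid by Remark~\ref{rem:polygon_containment}), $\psi$ preserves $\subseteq$ as well. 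Combining the two: for any $P\in\pol(S)$ and any triangulation $\calT\supseteq P$ we have $\psi(P)\subseteq\psi(\calT)=\calT$, so the displayed identity gives $\psi(P)\subseteq P$; applying this to $\psi^{-1}$ (also fixing all triangulations) yields $P\subseteq\psi(P)$, hence $\psi(P)=P$. Thus $\psi=\mathrm{id}$ and $\rho$ is surjective.

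I expect the only genuinely delicate point to be the displayed reconstruction identity --- specifically the observation that an essential arc disjoint from $P$ lies in a single complementary polygon with at least four sides (a triangle has no essential chords), so that some triangulation refining $P$ really can omit it. Everything else is formal bookkeeping on top of two facts already in hand: flip--graph rigidity (Theorem~\ref{thrm:flip_rig}) and the combinatorial detectability of the containment relation (Proposition~\ref{prop:characterise_containment}) and of triangulations among the vertices of $\pol(S)$ (Corollary~\ref{cor:deficiency_combinatorics}, used via Lemma~\ref{lem:pol_restr}).
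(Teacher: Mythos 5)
Your proof is correct and follows essentially the same route as the paper's: injectivity via flip-graph rigidity, and surjectivity by restricting to $\flip(S)$ (Lemma~\ref{lem:pol_restr}), pulling back a mapping class via Theorem~\ref{thrm:flip_rig}, and then identifying the two automorphisms on all vertices using Proposition~\ref{prop:characterise_containment} together with the fact that a polygonalisation is determined by the set of triangulations containing it. Your only cosmetic difference is normalising to an automorphism $\psi$ fixing all triangulations and spelling out the reconstruction identity $P=\bigcap\{\calT : \calT\supseteq P\}$, which the paper uses implicitly in the form $\mathcal{U}(Q)\supseteq\mathcal{U}(Q')$ iff $Q\subseteq Q'$.
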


\begin{proof}
If $f \in \mathrm{Ker}(\rho)$ then it must induce the identity on $\flip(S)$. 
By Theorem~\ref{thrm:flip_rig} the map $f$ must be the trivial mapping class and so $\rho$ is injective.

We now prove that $\rho$ is surjective. 
By Lemma \ref{lem:pol_restr} any $\phi \in \Aut(\pol(S))$ induces some $\phi' \in \Aut (\flip(S))$. 
By Theorem \ref{thrm:flip_rig}, $\phi'$ is induced by some $f \in \EMod(S)$. 
Let $f_\star $ be the automorphism of $\pol(S)$ induced by $f$. 
We will now prove that $f_\star(P)  = \phi(P)$ for every $P \in \pol(S)$.  
Denote by $\mathcal{U}(P)$ the set of all triangulations that contain $P$. 
By Lemma \ref{lem:pol_restr} the map $\phi$ preserves the set of all triangulations. 
By Proposition \ref{prop:characterise_containment} the map $\phi$  preserves the direction of the inclusions. 
Hence, we have $\mathcal{U}(\phi(P)) = \phi(\mathcal{U}(P)) $.
Similarly, $\mathcal{U}(f_\star(P)) = f_\star(\mathcal{U}(P))$. 
Since $f_\star$ and $\phi$ coincide on every triangulation, we have $\mathcal{U}(f_\star(P)) = \mathcal{U}(\phi(P))$. 
Note that $\mathcal{U}(Q) \supseteq \mathcal{U}(Q')$ if and only if $Q \subseteq Q'$.
It follows that $f_\star(P) = \phi(P)$, hence $\rho(f) = \phi$. 
\end{proof}

\begin{acknowledgements}
The authors are grateful for the hospitality of the University of Illinois, Indiana University, University of Oklahoma, ICERM and MSRI.
The authors acknowledge support from U.S. National Science Foundation grants DMS 1107452, 1107263, 1107367 ``RNMS: GEometric structures And Representation varieties'' (the GEAR Network).
The second author acknowledges support from IU Provost's Travel Award for Women in Science.
This material is based upon work supported by the National Science Foundation under Grant No.\ DMS 1440140 while the second and third authors were in residence at the Mathematical Sciences Research Institute in Berkeley, California, during the Fall 2016 semester.
\end{acknowledgements}

\appendix

\section{Examples of polygonalisation complexes}
\label{sec:examples}

Here we use the notation $S_{g,s}^{p_1, \ldots, p_b}$ to denote the surface of genus $g$ with $s$ marked points in its interior and $b$ boundary components with $p_1, \ldots, p_b$ marked points respectively.
In the following examples we colour a polygonalisation $P$ red, blue or green depending on whether $E(S) - |P|$ is zero, one or two respectively.
Hence the subgraph induced by the red and blue vertices is homeomorphic to the flip graph of the surface.
In each example we highlight one top-dimensional cube in each $\EMod(S)$--orbit. In each of these cases $F(S) = 3$ and the only omitted case with $F(S) = 3$ is $S_{1,0}^{1}$.
Additionally, the polygonalisation complex of a hexagon, where $F(S_{0,0}^6) = 4$, is shown in Figure~\ref{fig:P_S_0_0^6}. Many of these complexes were generated using \texttt{fatter} \cite{fatter}.

\begin{longtable}{cc}
$S$ & $\pol(S)$ \\ \hline
$S_{0,0}^{5} = $
\tikz[scale=0.3,line cap=round,baseline=-3]{
	\foreach \i in {90,162,...,378} {\draw (\i:1) -- (\i+72:1);}
} & \begin{tikzpicture}[scale=2, thick, baseline=0]

\pgfmathsetmacro{\inner}{0.5}

\fill [blue, opacity=0.2] (0,0) -- ($(18:\inner)!0.5!(90:\inner)$) -- (90:\inner) -- ($(90:\inner)!0.5!(90+72:\inner)$);

\foreach \i in {90,162,...,378} {
	\draw (\i:\inner) -- (\i+72:\inner);
	\draw ($(\i:\inner)!0.5!(\i+72:\inner)$) -- (0,0);
}
\foreach \i in {90,162,...,378} {
	\node [dot, red] at (\i:\inner) {};
	\node [dot, blue] at ($(\i:\inner)!0.5!(\i+72:\inner)$) {};
}
\node [dot, green] at (0,0) {};

\path [use as bounding box] (0,0.7) -- (0,-0.6);

\end{tikzpicture} \\
$S_{0,1}^{3} = $
\tikz[scale=0.3,line cap=round,baseline=-3]{
	\foreach \i in {90,162,...,378} {\draw (\i:1) -- (\i+72:1);}
	\node [tri,rotate=66] at ($(90:1)!0.5!(90+72:1)$) {};
	\node [tri,rotate=-66] at ($(90:1)!0.5!(90-72:1)$) {};
} & \begin{tikzpicture}[scale=1.1,thick,baseline=0]

\newcommand{\upV}{90:1};
\newcommand{\leftV}{150:1};
\newcommand{\rightV}{30:1};

\coordinate (V) at (\upV);
\coordinate (L) at (\leftV);
\coordinate (R) at (\rightV);

\coordinate (A) at ($(V)+(L)$);
\coordinate (B) at ($(V)+(R)$);
\coordinate (C) at ($(0,0)!1!-120:(A)$);
\coordinate (D) at ($(0,0)!1!-120:(V)$);
\coordinate (X) at ($0.25*(B)+0.25*(C)$);

\fill [blue, opacity=0.2] (0,0) -- ($0.5*(V)$) -- (X) -- ($0.5*(D)$) -- cycle;
\fill [gray, opacity=0.2] ($0.5*(V)$) -- (V) -- ($(V)+0.5*(R)$) -- (X) -- cycle;
\fill [red, opacity=0.2] ($(B)$) -- ($0.5*(B)+0.5*(C)$) -- (X) -- ($(V)+0.5*(R)$) -- cycle;

\foreach \i in {0,120,240} {
\begin{scope}[rotate=\i]
	\coordinate (V) at (\upV);
	\coordinate (L) at (\leftV);
	\coordinate (R) at (\rightV);
	
	\coordinate (A) at ($(V)+(L)$);
	\coordinate (B) at ($(V)+(R)$);
	\coordinate (C) at ($(0,0)!1!-120:(A)$);
	\coordinate (D) at ($(0,0)!1!-120:(V)$);
	\coordinate (X) at ($0.25*(B)+0.25*(C)$);
	\draw (0,0) -- (V);
	\draw (V) -- (A);
	\draw (V) -- (B);
	\draw (B) -- (C);
	\draw (X) -- ($0.5*(V)$);
	\draw (X) -- ($0.5*(D)$);
	\draw (X) -- ($(V)+0.5*(R)$);
	\draw (X) -- ($2*(X)$);
	\draw (X) -- ($0.5*(C)+0.5*(D)$);
\end{scope}
}

\foreach \i in {0,120,240} {
\begin{scope}[rotate=\i]
	\coordinate (V) at (\upV);
	\coordinate (L) at (\leftV);
	\coordinate (R) at (\rightV);
	
	\coordinate (A) at ($(V)+(L)$);
	\coordinate (B) at ($(V)+(R)$);
	\coordinate (C) at ($(0,0)!1!-120:(A)$);
	\coordinate (D) at ($(0,0)!1!-120:(V)$);
	\coordinate (X) at ($0.25*(B)+0.25*(C)$);
	\node [dot, green] at (X) {};
	\node [dot, blue] at ($0.5*(V)$) {};
	\node [dot, red] at (V) {};
	\node [dot, red] at (A) {};
	\node [dot, red] at (B) {};
	\node [dot, blue] at ($(V)+0.5*(L)$) {};
	\node [dot, blue] at ($(V)+0.5*(R)$) {};
	\node [dot, blue] at ($2*(X)$) {};
\end{scope}
}


\node [dot, red] at (0,0) {};

\path [use as bounding box] (0,1.7) -- (0,-1.7);

\end{tikzpicture} \\
$S_{0,0}^{2,1} = $
\tikz[scale=0.3,line cap=round,baseline=-3]{
	\foreach \i in {90,162,...,378} {\draw (\i:1) -- (\i+72:1);}
	\node [tri,rotate=18] at ($(90+72+72:1)!0.5!(90+72:1)$) {};
	\node [tri,rotate=-18] at ($(90-72-72:1)!0.5!(90-72:1)$) {};
} & \begin{tikzpicture}[xscale=1.75, yscale=1.1, thick, baseline=0]

\fill [blue, opacity=0.2] (0.5,-0.75) -- (0,-0.625) -- (0.25,0) -- (0.5,-0.25) -- cycle;
\fill [gray, opacity=0.2] (0,-1.25) -- (0.5,-1.25) -- (0.5,-0.75) -- (0,-0.625) -- cycle;
\fill [red, opacity=0.2] (0,-0.625) -- (0.25,0) -- (0,0.25) -- (-0.25,0) -- cycle;

\draw (-2.25,-1.25) -- (2.25,-1.25);
\draw (-2.25,1.25) -- (2.25,1.25);

\foreach \i / \s in {-1.5/1, -1/-1, -0.5/1, 0/-1, 0.5/1, 1/-1, 1.5/1}{
\begin{scope}[shift={(\i,0)}, yscale=\s, xscale=0.5]
	\draw (-1,1.25) -- (-1,0.25) -- (0,-0.25) -- (1,0.25) -- (1,1.25);
	\foreach \p in {(0,1.25),(-1,0.75),(-0.5,0),(0.5,0),(1,0.75)} {\draw [thin] (0,0.625) -- \p;}
	\foreach \p in {(0,1.25),(-1,0.75),(-0.5,0),(0.5,0),(1,0.75)} {\node [dot,blue] at \p {};}
	\foreach \p in {(-1,1.25),(-1,0.25),(0,-0.25),(1,0.25),(1,1.25)} {\node [dot,red] at \p {};}
	\node [dot,green] at (0,0.625) {};
\end{scope}
}

\node at (-2,0) {$\cdots$};
\node at (2,0) {$\cdots$};

\path [use as bounding box] (0,1.7) -- (0,-1.7);

\end{tikzpicture} \\
$S_{0,2}^{1} = $
\tikz[scale=0.3,line cap=round,baseline=-3]{
	\foreach \i in {90,162,...,378} {\draw (\i:1) -- (\i+72:1);}
	\node [tri,rotate=66] at ($(90:1)!0.5!(90+72:1)$) {};
	\node [tri,rotate=-66] at ($(90:1)!0.5!(90-72:1)$) {};
	\node [tri,rotate=18,fill=black] at ($(90+72+72:1)!0.5!(90+72:1)$) {};
	\node [tri,rotate=-18,fill=black] at ($(90-72-72:1)!0.5!(90-72:1)$) {};
} & \begin{tikzpicture}[xscale=1.2, yscale=1.25,thick,baseline=0]

\coordinate (leftV) at (-0.4,-1);
\coordinate (rightV) at (0.4,-0.75);

\node at (3.75,0) {$\cdots$};
\node at (-3.75,0) {$\cdots$};

\fill [blue, opacity=0.2] (-0.5,1.5) -- (-0.25,1.25) -- (-0.5,0.75) -- (-0.75,1.25) -- cycle;
\fill [gray, opacity=0.2] (-0.25,1.25) -- (0,1) -- (0,0.5) -- (-0.5,0.75) -- cycle;
\fill [red, opacity=0.2] (-0.5,0.75) -- (0,0.5) -- (0,0) -- (-0.5,0) -- cycle;
\fill [green, opacity=0.2] (-0.5,0) -- (0,0) -- (0.5,0) -- ($(0,0)+0.5*(leftV)$) -- cycle;
\fill [black, opacity=0.2] (0.5,0) -- (1,0) -- ($(1,0)+0.5*(leftV)$) -- ($(0,0)+0.5*(leftV)$) -- cycle;
\fill [orange, opacity=0.2] ($(0,0)+0.5*(leftV)$) -- ($(1,0)+0.5*(leftV)$) -- ($(1,0)+(leftV)$) -- ($(0,0)+(leftV)$) -- cycle;

\draw (-3.5,0) -- (3.5,0);
\foreach \i in {-3,-2,...,3} {\draw (\i,0) -- (\i,1);}
\foreach \i in {-3,-2,...,2} {\draw (\i,1) -- (\i+0.5,1.5); \draw (\i+0.5,1.5) -- (\i+1,1);}

\foreach \i in {-3,-1,1} {\draw (\i,0) -- ($(\i,0)+(leftV)$) -- ($(\i+2,0)+(leftV)$) -- (\i+2,0);}
\draw [dotted] (-2,0) -- ($(-2,0)+(rightV)$) -- ($(2,0)+(rightV)$) -- (2,0);
\draw [dotted] (0,0) -- (rightV);
\foreach \i in {-1,1} {
	\draw [dotted, thin] ($(\i,0)+0.5*(rightV)$) -- (\i-0.5,0);
	\draw [dotted, thin] ($(\i,0)+0.5*(rightV)$) -- (\i+0.5,0);
	\draw [dotted, thin] ($(\i,0)+0.5*(rightV)$) -- ($(\i,0)+(rightV)$);
	\draw [dotted, thin] ($(\i,0)+0.5*(rightV)$) -- ($(\i-1,0)+0.5*(rightV)$);
	\draw [dotted, thin] ($(\i,0)+0.5*(rightV)$) -- ($(\i+1,0)+0.5*(rightV)$);
	}
\foreach \i in {-2,0,2} {\node [dot, red!40] at ($(\i,0)+(rightV)$) {};}
\foreach \i in {-2,0,2} {\node [dot, blue!40] at ($(\i,0)+0.5*(rightV)$) {};}
\foreach \i in {-1,1} {\node [dot, blue!40] at ($(\i,0)+(rightV)$) {};}
\foreach \i in {-1,1} {\node [dot, green!40] at ($(\i,0)+0.5*(rightV)$) {};}

\foreach \i in {-3,-2,...,2} {
	\draw [thin] (\i+0.5,0.75) -- (\i+0.5,0);
	\draw [thin] (\i+0.5,0.75) -- (\i,0.5);
	\draw [thin] (\i+0.5,0.75) -- (\i+1,0.5);
	\draw [thin] (\i+0.5,0.75) -- (\i+0.25,1.25);
	\draw [thin] (\i+0.5,0.75) -- (\i+0.75,1.25);
	}

\foreach \i in {-3,-1,1} {
	\draw [thin] ($(\i+1,0)+0.5*(leftV)$) -- (\i+0.5,0);
	\draw [thin] ($(\i+1,0)+0.5*(leftV)$) -- (\i+1.5,0);
	\draw [thin] ($(\i+1,0)+0.5*(leftV)$) -- ($(\i,0)+0.5*(leftV)$);
	\draw [thin] ($(\i+1,0)+0.5*(leftV)$) -- ($(\i+2,0)+0.5*(leftV)$);
	\draw [thin] ($(\i+1,0)+0.5*(leftV)$) -- ($(\i+1,0)+(leftV)$);
	}

\foreach \i in {-3,-2,...,2} {
	\node [dot, green] at (\i+0.5,0.75) {};
	
	\node [dot, blue] at (\i+0.5,0) {};
	\node [dot, blue] at (\i,0.5) {};
	\node [dot, blue] at (\i+0.25,1.25) {};
	\node [dot, blue] at (\i+0.75,1.25) {};
	
	\node [dot, red] at (\i,0) {};
	\node [dot, red] at (\i,1) {};
	\node [dot, red] at (\i+0.5,1.5) {};
	}
\node [dot, red] at (3,0) {};
\node [dot, blue] at (3,0.5) {};
\node [dot, red] at (3,1) {};

\foreach \i in {-3,-1,1} {\node [dot, green] at ($(\i+1,0)+0.5*(leftV)$) {};}
\foreach \i in {-3,-1,1,3} {
	\node [dot, blue] at ($(\i,0)+0.5*(leftV)$) {};
	\node [dot, red] at ($(\i,0)+(leftV)$) {};
	}
\foreach \i in {-2,0,2} {\node [dot, blue] at ($(\i,0)+(leftV)$) {};}

\path [use as bounding box] (0,1.7) -- (0,-1.7);

\end{tikzpicture} \\
\caption{Examples of polygonalisation complexes.}
\label{tab:examples}
\end{longtable}

\section{Examples of hyperplanes}

Some examples of hyperplanes in polygonalisation complexes. There are examples of CAT(0)/non CAT(0) hyperplanes and examples in which the hyperplanes are not convex subsets.

\begin{longtable}{cc}
$\alpha \in \calA(S)$ & $H_\alpha$ \\ \hline
\tikz[scale=0.3,line cap=round,baseline=-3]{
	\draw [red] (60:1) -- node {\contour*{white}{\tiny $\alpha$}} (180:1);
	\foreach \i in {0,60,...,360} {\draw (\i:1) -- (\i+60:1);}
	\node [tri,rotate=-30] at ($(0:1)!0.5!(-60:1)$) {};
	\node [tri,rotate=30] at ($(180:1)!0.5!(240:1)$) {};
} & \begin{tikzpicture}[xscale=1.75, yscale=1.1, thick, baseline=0]

\fill [blue, opacity=0.2] (0.5,-0.75) -- (0,-0.625) -- (0.25,0) -- (0.5,-0.25) -- cycle;
\fill [gray, opacity=0.2] (0,-1.25) -- (0.5,-1.25) -- (0.5,-0.75) -- (0,-0.625) -- cycle;
\fill [red, opacity=0.2] (0,-0.625) -- (0.25,0) -- (0,0.25) -- (-0.25,0) -- cycle;

\draw (-2.25,-1.25) -- (2.25,-1.25);
\draw (-2.25,1.25) -- (2.25,1.25);

\foreach \i / \s in {-1.5/1, -1/-1, -0.5/1, 0/-1, 0.5/1, 1/-1, 1.5/1}{
\begin{scope}[shift={(\i,0)}, yscale=\s, xscale=0.5]
	\draw (-1,1.25) -- (-1,0.25) -- (0,-0.25) -- (1,0.25) -- (1,1.25);
	\foreach \p in {(0,1.25),(-1,0.75),(-0.5,0),(0.5,0),(1,0.75)} {\draw [thin] (0,0.625) -- \p;}
	\foreach \p in {(0,1.25),(-1,0.75),(-0.5,0),(0.5,0),(1,0.75)} {\node [dot] at \p {};}
	\foreach \p in {(-1,1.25),(-1,0.25),(0,-0.25),(1,0.25),(1,1.25)} {\node [dot] at \p {};}
	\node [dot] at (0,0.625) {};
\end{scope}
}

\node at (-2,0) {$\cdots$};
\node at (2,0) {$\cdots$};

\path [use as bounding box] (0,1.7) -- (0,-1.7);

\end{tikzpicture} \\
\tikz[scale=0.3,line cap=round,baseline=-3]{
	\draw [red] (60:1) -- node {\contour*{white}{\tiny $\alpha$}} (180:1);
	\foreach \i in {0,60,...,360} {\draw (\i:1) -- (\i+60:1);}
} & \begin{tikzpicture}[scale=2, thick, baseline=0]

\pgfmathsetmacro{\inner}{0.5}

\fill [blue, opacity=0.2] (0,0) -- ($(18:\inner)!0.5!(90:\inner)$) -- (90:\inner) -- ($(90:\inner)!0.5!(90+72:\inner)$);

\foreach \i in {90,162,...,378} {
	\draw (\i:\inner) -- (\i+72:\inner);
	\draw ($(\i:\inner)!0.5!(\i+72:\inner)$) -- (0,0);
}
\foreach \i in {90,162,...,378} {
	\node [dot] at (\i:\inner) {};
	\node [dot] at ($(\i:\inner)!0.5!(\i+72:\inner)$) {};
}
\node [dot] at (0,0) {};

\path [use as bounding box] (0,0.7) -- (0,-0.6);

\end{tikzpicture} \\
\tikz[scale=0.3,line cap=round,baseline=-3]{
	\draw [red] (60:1) -- node {\contour*{white}{\tiny $\alpha$}} (240:1);
	\foreach \i in {0,60,...,360} {\draw (\i:1) -- (\i+60:1);}
} & \begin{tikzpicture}[scale=2, thick, baseline=0]

\pgfmathsetmacro{\inner}{0.5}

\fill [blue, opacity=0.2] (0,0) -- ($(45:\inner)!0.5!(135:\inner)$) -- (45:\inner) -- ($(45:\inner)!0.5!(315:\inner)$);

\foreach \i in {45,135,225,315} {
	\draw (\i:\inner) -- (\i+90:\inner);
	\draw ($(\i:\inner)!0.5!(\i+90:\inner)$) -- (0,0);
}
\foreach \i in {45,135,225,315} {
	\node [dot] at (\i:\inner) {};
	\node [dot] at ($(\i:\inner)!0.5!(\i+90:\inner)$) {};
}
\node [dot] at (0,0) {};

\path [use as bounding box] (0,0.7) -- (0,-0.6);

\end{tikzpicture} \\
\caption{Examples of hyperplanes in polygonalisation complexes.}
\label{tab:example_hyperplanes}
\end{longtable}

\section{Exceptional surfaces}
\label{sec:exceptions}

In Table~\ref{tab:exceptions} we list the exceptional surfaces, that is, the surfaces with $F(S) < 3$.
Again we use the notation of Appendix~\ref{sec:examples} to describe these surfaces.
This table shows the standing of Theorem~\ref{thrm:pol_isom} and Theorem~\ref{thrm:pol_aut} in these cases.
Examination of $\pol(S)$ in these cases also shows that variants of some of our results, such as Theorem~\ref{thrm:pol_sageev}\footnote{For $S_{1,1}$ the hyperplanes are separating but the two complementary components are not $\pol_\alpha(S)$ and $\overline{\pol_\alpha}(S)$.}, also hold when $F(S) < 3$.

\begin{table}[ht]
\begin{tabular}{ccccc}
$S$ & $\pol(S)$ & $\Aut(\pol(S))$ & $\EMod(S) \to \Aut(\pol(S))$ \\ \hline
$S_{1,1}$ & \begin{tikzpicture}[scale=1, thick, baseline=0]
	\draw (90:0.5) -- (270:0.5);
	\node [dot, blue, scale=1] at (0,0) {};
	
	\foreach \s / \n / \r / \rr in {45 / 2 / 0.5 / 1, 22.5 / 4 / 1 / 1.25, 11.25 / 8 / 1.25 / 1.375, 5.625 / 16 / 1.375 / 1.4375}{
		\foreach \i in {1,2,...,\n}{
			\draw (2*\s+4*\s*\i:\r) -- (2*\s+4*\s*\i-\s:\rr);
			\draw (2*\s+4*\s*\i:\r) -- (2*\s+4*\s*\i+\s:\rr);
			\node [dot, red, scale=2/\n] at (2*\s+4*\s*\i:\r) {};
			\node [dot, blue, scale=2/\n] at ($(2*\s+4*\s*\i:\r)!0.5!(2*\s+4*\s*\i-\s:\rr)$) {};
			\node [dot, blue, scale=2/\n] at ($(2*\s+4*\s*\i:\r)!0.5!(2*\s+4*\s*\i+\s:\rr)$) {};
		}
	}
	\draw [dotted] (0,0) circle [radius=1.5];
	\draw [draw=none] (0,0) circle [radius=1.7]; 
\end{tikzpicture} & Uncountable \cite{KorkmazPapadopoulos} & Homomorphism \\
$S_{0,3}$ & \begin{tikzpicture}[scale=1, thick, baseline=0]

\draw (0,0) -- (90:1);
\draw (0,0) -- (210:1);
\draw (0,0) -- (330:1);

\node [dot, red] at (0,0) {};
\node [dot, red] at (90:1) {};
\node [dot, red] at (210:1) {};
\node [dot, red] at (330:1) {};

\node [dot, blue] at (90:0.5) {};
\node [dot, blue] at (210:0.5) {};
\node [dot, blue] at (330:0.5) {};

\path (0,-0.6) -- (0,1.1); 

\end{tikzpicture} & $\Sym(3)$ & Epimorphism \\
$S_{0,1}^{2}$ & \begin{tikzpicture}[xscale=0.5, yscale=0.25, thick, baseline=0]

\draw (-2,1) -- (-1,-1) -- (0,1) -- (1,-1) -- (2,1);
\node [dot, red] at (-2,1) {};
\node [dot, red] at (0,1) {};
\node [dot, red] at (2,1) {};
\node [dot, blue] at (-1,-1) {};
\node [dot, blue] at (1,-1) {};

\path (-2.1,1.8) -- (2.1,-1.8); 

\end{tikzpicture} & $\ZZ_2$ & Epimorphism \\
$S_{0,0}^{4}$ & \begin{tikzpicture}[xscale=0.5, yscale=0.25, thick, baseline=0]

\draw (-1,1) -- (0,-1) -- (1,1);
\node [dot, red] at (-1,1) {};
\node [dot, blue] at (0,-1) {};
\node [dot, red] at (1,1) {};

\path (-1.1,1.8) -- (1.1,-1.8); 

\end{tikzpicture} & $\ZZ_2$ & Epimorphism \\
$S_{0,0}^{1,1}$ & \begin{tikzpicture}[xscale=1, yscale=0.5, thick, baseline=0]

\draw (-1,1) -- (-1.25,0.5);
\draw (2,1) -- (2.25,0.5);
\foreach \x in {-1,0,1} {
	\draw (\x,1) -- (\x+0.5,0) -- (\x+1,1);
	\node [dot, red] at (\x,1) {};
	\node [dot, blue] at (\x+0.5,0) {};
	}
\node [dot, red] at (2,1) {};
\node at (2.5,0.5) {$\cdots$};
\node at (-1.5,0.5) {$\cdots$};

\path (-1.6,1.4) -- (2.6,-0.4); 

\end{tikzpicture} & $\ZZ_2 \ast \ZZ_2$ & Homomorphism \\
$S_{0,1}^{1}$ & \begin{tikzpicture}[scale=1, thick, baseline=0]

\node [dot, red] {};

\path (-0.5,0.5) -- (0.5,-0.5); 

\end{tikzpicture} & $\one$ & Epimorphism \\
$S_{0,0}^{3}$ & \begin{tikzpicture}[scale=1, thick, baseline=0]

\node [dot, red] {};

\path (-0.5,0.5) -- (0.5,-0.5); 

\end{tikzpicture} & $\one$ & Epimorphism \\
$S_{0,2}$ & $\emptyset$ & $\one$ & Epimorphism \\[1em]
$S_{0,0}^{2}$ & $\emptyset$ & $\one$ & Epimorphism \\[1em]
$S_{0,0}^{1}$ & $\emptyset$ & $\one$ & Epimorphism \\[1em]
$S_{0,1}$ & $\emptyset$ & $\one$ & Epimorphism \\[1em]
\end{tabular}
\caption{The exceptional surfaces.}
\label{tab:exceptions}
\end{table}

\clearpage

\bibliographystyle{amsplain}
\bibliography{bibliography}

\end{document}